\def\cleardoublepage{\clearpage\if@twoside \ifodd\c@page\else%
         \hbox{}%
     \thispagestyle{empty}
     \newpage%
     \if@twocolumn\hbox{}\newpage\fi\fi\fi}
\let\cleardoublepage\clearpage
\newtheorem{thm}{Theorem}[section]
\newtheorem{lem}[thm]{Lemma}
\newtheorem{pro}[thm]{Proposition}
\newtheorem{den}[thm]{Definition}
\newtheorem{oss}[thm]{Remark}
\numberwithin{equation}{section}
\begin{document}

\title[]{Fractional porous media equations: existence and uniqueness of weak solutions with measure data}

\author {Gabriele Grillo, Matteo Muratori, Fabio Punzo}

\address {Gabriele Grillo, Matteo Muratori: Dipartimento di Matematica ``F. Brioschi'', Politecnico di Milano, Piaz\-za Leonardo da Vinci 32, 20133 Milano, Italy}
\email{gabriele.grillo@polimi.it}
\email{matteo.muratori@polimi.it}

\address{Fabio Punzo: Dipartimento di Matematica ``F. Enriques'', Universit\`a degli studi di Milano, via Cesare Saldini 50, 20133 Milano, Italy}
\email{fabio.punzo@unimi.it}

\keywords{Weighted porous media equation; weighted Sobolev
inequalities; nonlinear diffusion equations; smoothing effect;
asymptotics of solutions.}
%
%
%
%


\begin{abstract}
We prove existence and uniqueness of solutions to a class of
porous media equations driven by the fractional Laplacian when the
initial data are positive finite Radon measures on the Euclidean space
${\mathbb R}^d$. For given solutions without a prescribed initial
condition, the problem of existence and uniqueness of the initial
trace is also addressed. By the same methods we can also
treat weighted fractional porous media equations, with a weight that
can be singular at the origin, and must have a sufficiently slow
decay at infinity (power-like). In particular, we show that the
Barenblatt-type solutions exist and are unique. Such a result
has a crucial role in \cite{GMP-convergence}, where the
asymptotic behavior of solutions is investigated. Our uniqueness result solves a problem left open, even in the non-weighted case, in \cite{VazConstr}.
\end{abstract}
\maketitle


\vskip -10pt

\section{Introduction}
The main goal of this note is to prove existence and uniqueness of
solutions to the following problem:
\begin{equation}\label{eq: barenblatt-regular}
\begin{cases}
  \rho(x) u_t + (-\Delta)^s\left( u^m \right) = 0  &  \ \textrm{in} \ \mathbb{R}^d \times \mathbb{R}^+ \, , \\
  \rho(x) u = \mu & \ \textrm{on} \ \mathbb{R}^d \times \{ 0 \} \, ,
\end{cases}
\end{equation}
where we assume that $ s \in (0,1) $, $ d>2s $, $ m>1 $, $\mu$ is a positive finite Radon measure on
$\mathbb{R}^d$ (so that $ u \ge 0 $) and that the (Lebesgue) measurable weight $ \rho $ satisfies
\begin{equation}\label{eq: ass-rho}
c |x|^{- \gamma_0 } \le \rho(x) \le C |x|^{- \gamma_0 } \ \ \textrm{a.e.\ in } B_1 \quad \textrm{and} \quad  c |x|^{-\gamma} \le \rho(x) \le C |x|^{-\gamma} \ \ \textrm{a.e.\ in } B_1^c
\end{equation}
for some $ \gamma \in [0,2s), \gamma_0 \in [0,\gamma] $ and $0< c < C $, where $B_r=B_r(0)$. Furthermore, for any given solution to the differential
equation in \eqref{eq: barenblatt-regular}, namely without a prescribed initial datum, we also prove that there exists a unique initial trace which is a positive finite Radon measure (see Theorem \ref{thm: initial}). Observe that this result suggests that is quite natural to consider a positive finite Radon measure $\mu$ as
the initial condition in \eqref{eq: barenblatt-regular}. We stress that the results concerning uniqueness are new even for $\rho\equiv 1$, which
obviously fulfills \eqref{eq: ass-rho}, thus solving an open problem posed in \cite{VazConstr} where such a problem is addressed for initial data
given by Dirac deltas, namely for \it Barenblatt solutions\rm. In this case, the problem is known as \emph{fractional porous media equation} and has
been thoroughly analysed in \cite{DQRV1/2,DQRV} for initial data in $L^1(\mathbb{R}^d)$. More in general, in view of various applications well
outlined in the literature (see e.g.~\cite{KR1}), we also consider the weight $\rho(x)$ since the same methods of proof work in this case as well. In
this regard, observe that even if $\rho\in C(\mathbb R^d)$ has a suitable decay at infinity, and $\mu=u_0\in L^1_\rho(\mathbb R^d)$, then the
asymptotics of \emph{any} solution can be determined by referring to the {\it Barenblatt solution} (i.e.~the solution to problem \eqref{eq:
barenblatt-regular} with $\mu=\delta$) for the problem with \it singular, homogeneous \rm weight $ \rho(x)=|x|^{-\gamma} $, which makes the latter
scale-invariant. Also for this reason we treat weights $\rho$ that satisfy \eqref{eq: ass-rho}, thus being allowed to be singular at $x=0$. However,
some further restrictions on $ s $, $ d $ and $ \gamma $ will be required and clarified later, see Theorems \ref{thm: teorema-esistenza} and
\ref{thm: teorema-uniqueness}. Let us mention that our results entailing the existence and uniqueness of Barenblatt solutions for singular weights are
used in a crucial way in \cite{GMP-convergence} to obtain the asymptotic behavior recalled above.

The analysis of the evolutions addressed here poses significant
difficulties especially as concerns uniqueness, as can be guessed even when considering their linear
analogues. In fact, the first issue we have to deal with is the
essential self-adjointness of the operator formally defined as
$  \rho^{-1}(-\Delta)^s$ on test functions, and the validity of
the Markov property for the associated linear evolution. This will
be crucial in the uniqueness proof and holds only if $\gamma$ is not too large. For larger
$\gamma$ one expects that suitable conditions at infinity
should be required to recover self-adjointness.

Notice that the study of
weighted linear differential operators of second order has a long
story, see for example \cite[Section 4.7]{D} or \cite{Pang}.
Recently, the analysis of the spectral properties of operators which
are modeled on the critical operator formally given by
$|x|^2\Delta$ has been performed in \cite{DFP}.

As for nonlinear evolutions, the study of porous media and fast
diffusion equations with measure data can be tracked back to the
pioneering papers \cite{AC, BF, Pierre, DK}. See
\cite[Section 13]{Vaz07} for details and additional references. The
fast diffusion case, which will not be dealt with here, is
investigated in \cite{CV1, CV2}: notice that for such evolutions the Dirac
delta may not be smoothed into a regular solution, so that
different techniques must be used, see the recent paper \cite{PST}
for a general approach. In \cite{DQRV1/2,
DQRV}, the fractional porous media and fast diffusion equations
have been introduced and thoroughly studied for initial data which are
integrable functions. The construction of Barenblatt solutions and
the analysis of their role as asymptotic attractors for general
integrable data is performed in \cite{VazConstr}. Existence and
uniqueness of solutions in the fractional, weighted case is
studied in \cite{PT1, PT2}: however, the weight there cannot be singular and
data cannot be measures.

Semilinear heat equations with measure data have a long history as well and have recently been studied also in the fractional case, see e.g.~\cite{MV, CVW} and references quoted. We remark that the terminology ``measure data'' is sometimes
used in different contexts in which a measure appears as a source
term in certain evolution equations: see e.g.~\cite{Min}.

There is a huge literature on the weighted porous
media equation: see for example \cite{DG+08,
DNS, Eid90, EK, GM, GMP, KRV10, KR1, KR2, P1, RV06, RV08, RV09}
and references quoted therein. It should be pointed out
that the possible singularity of the weight, and the fact that we
consider measure data as well, makes our problem
significantly different both from the non-weighted, fractional case
and from the weighted, non-fractional case: straightforward
modifications of the strategies used to tackle such problems turn out not to be
applicable here.

Finally, notice that fractional porous media
equations are being used as a model in several applied contexts,
see e.g.~\cite[Appendix B]{BV} and references quoted for details.

\smallskip\noindent{\bf Outline of the paper}. The paper is organized as follows. Section 2 briefly collects some
preliminary tools on measure theory, fractional Laplacians and
fractional Sobolev spaces. In Section 3 we state our main results. In Section 4 we prove existence of weak solutions and the result concerning existence and uniqueness of the initial trace, whereas in Section 5 uniqueness, which is by far the most delicate issue, is addressed: notice that, although we do not state this explicitly, the proofs work also in the case $s=1$ and the corresponding results are new in this context as well for the weighted case. In proving uniqueness, we use a ``duality method'', following the same line of reasoning introduced by M. Pierre in \cite{Pierre}. This entails serious new difficulties due to the presence of the fractional diffusion and of the weight $\rho$.
In Appendix A we recall some technical results on the fractional Laplacian, which are exploited in several approximating procedures developed in the proofs below. In Appendix B we sketch the proof of the main properties of the linear operator
formally given by $ \rho^{-1}(-\Delta)^s $. Such properties are of independent interest but are also crucial in order to establish uniqueness.

\section{Preliminary tools}\label{sec: not-def}
In this section we outline some basic notation, definitions and
properties that we shall make us of later, which concern weighted
Lebesgue spaces, measures, fractional Laplacians, fractional
Sobolev spaces and Riesz potentials of measures.

\noindent\textbf{Weighted Lebesgue spaces.} For a given measurable
function $ \rho: \mathbb{R}^d \rightarrow \mathbb{R}^{+} $ (that
is, a weight), we denote as $ L^p_{\rho}(\mathbb{R}^d) $ (let $ p
\in [1,\infty) $) the Banach space constituted by all (classes of
equivalence of) measurable functions $ f: \mathbb{R}^d \rightarrow
\mathbb{R} $ such that
\[
\left\| f \right\|_{p,\rho} := \left(\int_{\mathbb{R}^d} \left| f(x)
\right|^p \rho(x) \mathrm{d}x\right)^{1/p} < \infty \, .
\]
In the special case $ \rho(x)=|x|^{\alpha} $ (let $\alpha \in
{\mathbb R}$) we simplify notation and replace $
L^p_{\rho}(\mathbb{R}^d) $ by $ L^p_{\alpha}(\mathbb{R}^d) $ and $
\| f \|_{p,\rho} $ by $ \| f \|_{p,\alpha} $. For the usual
unweighted Lebesgue spaces we keep the symbol $
L^p(\mathbb{R}^d) $, denoting the corresponding norms as $\| f
\|_p$ or $\| f \|_{L^p(\mathbb{R}^d)} $.

\noindent\textbf{Positive finite Radon measures on ${\mathbb{R}^d}$.}
Since in \eqref{eq: barenblatt-regular} we deal with positive finite Radon
measures $ \mu $ on $\mathbb{R}^d $, we recall
some basic properties enjoyed by the set of such measures, which
we denote as $ \mathcal{M}(\mathbb{R}^d) $ (with a slight abuse of
notation: this is the usual symbol for the space of \emph{signed}
measures on $\mathbb{R}^d$). To begin with, consider a sequence
$\{ \mu_n \} \subset \mathcal{M}(\mathbb{R}^d) $. Following the
notation of \cite{Pierre}, we say that $ \{\mu_n \} $ converges to
$ \mu \in \mathcal{M}(\mathcal{\mathbb{R}}^d) $ in $
\sigma(\mathcal{M}(\mathbb{R}^d),C_c(\mathbb{R}^d)) $ if there
holds
\begin{equation}\label{eq: def-conv-weak-loc}
\lim_{n \to \infty}  \int_{\mathbb{R}^d} \phi \, \mathrm{d}\mu_n
= \int_{\mathbb{R}^d} \phi \, \mathrm{d}\mu   \ \ \ \forall \phi
\in C_c(\mathbb{R}^d) \, ,
\end{equation}
where $ C_c(\mathbb{R}^d) $ is the space of continuous, compactly
supported functions on $ \mathbb{R}^d$. This is usually referred
to as \emph{local weak$^\ast$\! convergence} (see \cite[Definition
1.58]{AFP}). A classical theorem in measure theory asserts that if
\begin{equation}\label{eq: conv-weak-loc-criterio}
\sup_n{\mu_n(\mathbb{R}^d)} < \infty
\end{equation}
then there exists $ \mu \in \mathcal{M}(\mathbb{R}^d) $ such that
$ \{ \mu_n \} $ converges to $ \mu $ in $
\sigma(\mathcal{M}(\mathbb{R}^d),C_c(\mathbb{R}^d)) $ up to
subsequences (see \cite[Theorem 1.59]{AFP}). The same holds if we replace $ C_c(\mathbb{R}^d) $ with $ C_0(\mathbb{R}^d) $, the latter being the closure of the former w.r.t.~$ \| \cdot \|_\infty $. A stronger notion of
convergence is the following. A sequence $\{ \mu_n \} \subset
\mathcal{M}(\mathbb{R}^d) $ is said to converge to $ \mu \in
\mathcal{M}(\mathbb{R}^d) $ in $
\sigma(\mathcal{M}(\mathbb{R}^d),C_b(\mathbb{R}^d)) $ if
\begin{equation}\label{eq: def-conv-fort}
\lim_{n \to \infty} \int_{\mathbb{R}^d} \phi \, \mathrm{d}\mu_n  =
\int_{\mathbb{R}^d} \phi \, \mathrm{d}\mu   \ \ \ \forall \phi \in
C_b(\mathbb{R}^d) \, ,
\end{equation}
where $ C_b(\mathbb{R}^d) $ is the space of continuous, bounded
functions on $ \mathbb{R}^d$. Trivially, \eqref{eq: def-conv-fort}
implies \eqref{eq: def-conv-weak-loc}. The opposite holds
under a further hypothesis. That is, if $ \{\mu_n\} $ converges to
$ \mu $ in $ \sigma(\mathcal{M}(\mathbb{R}^d),C_c(\mathbb{R}^d)) $
and
$
\lim_{n \to \infty} \mu_n(\mathbb{R}^d) =  \mu(\mathbb{R}^d)
$,
then $ \{\mu_n\} $ converges to $ \mu $ also in $
\sigma(\mathcal{M}(\mathbb{R}^d),C_b(\mathbb{R}^d)) $ (see
\cite[Proposition 1.80]{AFP}). Notice that if $ \{\mu_n\} $ converges to
$ \mu $ in $ \sigma(\mathcal{M}(\mathbb{R}^d),C_c(\mathbb{R}^d)) $
and \eqref{eq: conv-weak-loc-criterio} holds, a priori one only
has a weak$^\ast$\! lower semi-continuity property:
\[
\mu(\mathbb{R}^d) \le \liminf_{n \to \infty} \mu_n(\mathbb{R}^d)
\]
(see again \cite[Theorem. 1.59]{AFP}).

\noindent\textbf{Fractional Laplacians and fractional Sobolev
spaces.} The fractional $s$-Laplacian operator which appears in
\eqref{eq: barenblatt-regular} is defined, at least for any $ \phi \in
\mathcal{D}(\mathbb{R}^d):=C^\infty_c(\mathbb{R}^d)$, as
\begin{equation*}\label{eq: def-frac-lap}
(-\Delta)^s(\phi)(x):= p.v.\ C_{d,s} \int_{\mathbb{R}^d} \frac{
\phi(x)-\phi(y) }{|x-y|^{d+2s}} \, \mathrm{d}y \ \ \ \forall x \in
\mathbb{R}^d \, ,
\end{equation*}
where $ C_{d,s} $ is a suitable positive constant depending only
on $ d $ and $ s $. However, since a priori we have no clue about
the regularity of solutions to \eqref{eq: barenblatt-regular}, it is
necessary to reformulate the problem in a suitable weak sense, see
Definition \ref{eq: weak-sol-1} below. Before doing it, we need to
introduce some fractional Sobolev spaces. Here we shall mainly
deal with $ \dot{H}^s(\mathbb{R}^d) $, that is the closure of $
\mathcal{D}(\mathbb{R}^d) $ w.r.t.\ the norm
\[
\left\| \phi \right\|^2_{\dot{H}^s} := \frac{C_{d,s}}{2} \int_{\mathbb{R}^d}
\int_{\mathbb{R}^d} \frac{(\phi(x)-\phi(y))^2}{|x-y|^{d+2s}} \,
\mathrm{d}x \mathrm{d}y \ \ \ \forall \phi \in
\mathcal{D}(\mathbb{R}^d) \, .
\]
Notice that the space usually denoted as $ {H}^s(\mathbb{R}^d) $
is just $ L^2(\mathbb{R}^d) \cap \dot{H}^s(\mathbb{R}^d) $. For
definitions and properties of the general fractional Sobolev
spaces $ W^{r,p}(\mathbb{R}^d)$ we refer the reader e.g.~to
\cite{hiker}.

\noindent The link between the $s$-Laplacian and the space $\dot{H}^s(\mathbb{R}^d) $ can be seen by means of the identity
\begin{equation}\label{eq: id-parti-nonlocal}
\begin{aligned}
\frac{C_{d,s}}{2} \int_{\mathbb{R}^d} \int_{\mathbb{R}^d} \frac{(\phi(x)-\phi(y))(\psi(x)-\psi(y))}{|x-y|^{d+2s}} \, \mathrm{d}x \mathrm{d}y = & \int_{\mathbb{R}^d} (-\Delta)^{\frac{s}{2}}(\phi)(x) \, (-\Delta)^{\frac{s}{2}}(\psi)(x) \,  \mathrm{d}x \\
 = & \int_{\mathbb{R}^d} \phi(x) (-\Delta)^s(\psi)(x)  \, \mathrm{d}x \\
\end{aligned}
\end{equation}
for all $\phi,\psi \in \mathcal{D}(\mathbb{R}^d)$, see \cite[Section
3]{hiker}. In particular, $\left\| \phi \right\|^2_{\dot{H}^s}= \left\|
(-\Delta)^{\frac{s}{2}} (\phi) \right\|_{L^2}^2$ for all $\phi \in \mathcal{D}(\mathbb{R}^d) \, .$
Notice that \eqref{eq: id-parti-nonlocal} can be shown to hold, by approximation, also when $\phi\in \mathcal{D}(\mathbb{R}^d)$ is replaced by any $v \in \dot{H}^s(\mathbb{R}^d)$, where $(-\Delta)^{\frac{s}{2}}(v)$ is meant in the sense of distributions. By a further approximation procedure one then gets
\begin{equation}\label{eq: id-parti-nonlocal-Hs-Hs}
\frac{C_{d,s}}{2} \int_{\mathbb{R}^d} \! \int_{\mathbb{R}^d} \frac{(v(x)-v(y))(w(x)-w(y))}{|x-y|^{d+2s}} \, \mathrm{d}x \mathrm{d}y \! = \! \int_{\mathbb{R}^d} \! (-\Delta)^{\frac{s}{2}}(v)(x) \, (-\Delta)^{\frac{s}{2}}(w)(x) \, \mathrm{d}x \ \ \ \forall v,\! w \in \dot{H}^s(\mathbb{R}^d) \, . \\
\end{equation}
If we set $ v=w $ in \eqref{eq: id-parti-nonlocal-Hs-Hs} we deduce
that $\left\| v \right\|^2_{\dot{H}^s}= \left\|
(-\Delta)^{\frac{s}{2}} (v) \right\|_{L^2}^2$ also for any
$v\in\dot{H}^s(\mathbb{R}^d)$. In Sections \ref{sect: existence} and
\ref{sect: uniqueness} (and in Appendix \ref{sect: app-operatore})
we shall deal with functions which belong to
$\dot{H}^s(\mathbb{R}^d)$ and to weighted Lebesgue spaces.

\noindent\textbf{Riesz potentials.} Another mathematical object
deeply linked with the $s$-Laplacian is its Riesz
kernel, namely the function
\begin{equation*}
I_{2s}(x) := \frac{k_{d,s}}{|x|^{d-2s}} \, ,
\end{equation*}
where $k_{d,s}$ is again a positive constant depending only on $d$ and
$s$. For a given (possibly signed) finite Radon measure $ \nu $, one can show
that the convolution
\begin{equation*}
U^\nu := I_{2s} \ast \nu
\end{equation*}
yields an $ L^1_{\rm loc}(\mathbb{R}^d) $ function referred to as
the \emph{Riesz potential} of $ \nu $, which formally satisfies
\begin{equation*}
(-\Delta)^s (U^\nu) = \nu \, .
\end{equation*}
That is, still at a formal level, the convolution against $ I_{2s}
$ coincides with the operator $(-\Delta)^{-s}$. One of the most
important and classical references for Riesz potentials is the
monograph \cite{Landkof} by N. S. Landkof. In the proof of Theorem
\ref{thm: teorema-esistenza} and throughout Section \ref{sect:
uniqueness} we shall exploit some crucial properties of Riesz
potentials collected in \cite{Landkof}, along with their
connections with the $s$-Laplacian.

\section{Statements of the main results} \label{sect: state}
We start by introducing a suitable notion of weak solution to \eqref{eq: barenblatt-regular}, in the spirit of
\cite{DQRV} and \cite{PT2}.

\begin{den}\label{eq: weak-sol-1}
Given a finite positive finite Radon measure $\mu$, by a weak solution to
problem \eqref{eq: barenblatt-regular} we mean a nonnegative function $ u
$ such that
\begin{equation}\label{eq: weak-sol-spazi-u-1}
u \in L^\infty( (0,\infty); L^1_{\rho}(\mathbb{R}^d) ) \cap
L^\infty( \mathbb{R}^d \times (\tau , \infty ) )  \ \ \ \forall
\tau>0 \, ,
\end{equation}
\begin{equation}\label{eq: weak-sol-spazi-u-2}
u \in L^2_{\rm loc}((0,\infty); \dot{H}^s(\mathbb{R}^d))\,,
\end{equation}
\begin{gather}\label{eq: weak-sol-eq-debole}
 - \int_{0}^{\infty} \! \int_{\mathbb{R}^d}  u(x,t) \varphi_t(x,t) \, \rho(x) \mathrm{d}x \mathrm{d}t + \int_0^\infty \! \int_{\mathbb{R}^d} (-\Delta)^{\frac{s}{2}} (u^m)(x,t) \, (-\Delta)^{\frac{s}{2}} ( \varphi ) (x,t) \,  \mathrm{d}x \mathrm{d}t = 0 \\
  \forall \varphi \in C^\infty_c( \mathbb{R}^d \times (0,\infty) )  \nonumber
\end{gather}
and
\begin{equation}\label{eq: weak-sol-initial}
\operatorname{ess}\lim_{t \to 0} \rho(\cdot)\, u(\cdot,t)  = \mu  \ \ \
\textrm{in} \
\sigma(\mathcal{M}(\mathbb{R}^d),C_b(\mathbb{R}^d)) \, .
\end{equation}
\end{den}

Our first result concerns existence.
\begin{thm}\label{thm: teorema-esistenza}
Let $d>2s$ and assume that $\rho$ satisfies \eqref{eq: ass-rho} for some $\gamma \in [0,2s) \cap [0,d-2s]$ and $ \gamma_0 \in [0,\gamma] $. Let $ \mu
$ be a positive finite Radon measure. Then there exists a weak solution
$u$ to \eqref{eq: barenblatt-regular} according to Definition \ref{eq:
weak-sol-1}, which conserves the mass in the sense that $\mu\left({\mathbb R}^d\right)=\int_{{\mathbb R}^d}u(x,t)\rho(x){\rm d}x$ for all $t>0$, and satisfies the smoothing effect
\begin{equation}\label{eq: smoothing-effect-general}
 \left\| u(t) \right\|_\infty \le K \, t^{- \alpha } \, \mu(\mathbb{R}^d)^\beta   \ \ \ \forall t > 0 \, ,
\end{equation}
where $K$ depends only on $ m $,
$\gamma$, $s$, $ d $ and on the constant $C$ appearing in \eqref{eq: ass-rho}, and
\begin{equation*}
\alpha:= \frac{d-\gamma}{(m-1)(d-\gamma) + 2s-\gamma } \, , \
\ \ \beta:= \frac{2s-\gamma}{(m-1)(d-\gamma) + 2s-\gamma} \, .
\end{equation*}
In particular, $u(\cdot,t)\in L^p_{\rho}({\mathbb R}^d)$ for
all $t>0$ and $p\in[1,\infty]$. In addition, the solution satisfies the energy
estimates
\begin{equation}\label{eq: prima-esistenza-energy-1}
 \int_{t_1}^{t_2} \! \int_{\mathbb{R}^d} \left| (-\Delta)^{\frac{s}{2}} \left( u^m \right) (x,t) \right|^2  \mathrm{d}x \mathrm{d}t + \frac1{m+1}\int_{\mathbb{R}^d} u^{m+1}(x,t_2) \, \rho(x)\mathrm{d}x  = \frac1{m+1}\int_{\mathbb{R}^d} u^{m+1}(x,t_1) \, \rho(x) \mathrm{d}x
\end{equation}
and
\begin{equation}\label{eq: prima-esistenza-energy-2}
\int_{t_1}^{t_2} \! \int_{\mathbb{R}^d} \left| z_t(x,t)  \right|^2
\, \rho(x) \mathrm{d}x \mathrm{d}t  \le C^\prime \int_{\mathbb{R}^d} u^{m+1}\left(x,{t_1}/2\right) \, \rho(x)\mathrm{d}x
\end{equation}
for all $t_2>t_1>0 $, where $ z:=u^{\frac{m+1}{2}} $ and $C^\prime$ depends on $m$, $ t_1 $ and $ t_2 $.

\end{thm}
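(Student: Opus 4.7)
The plan is to construct $u$ through a simultaneous approximation of the weight and of the initial datum, followed by a limit procedure based on uniform a priori estimates. Set $\rho_\varepsilon(x):=\min\{\rho(x),\varepsilon^{-1}\}$, which is bounded near the origin yet still satisfies the far-field decay $|x|^{-\gamma}$ in \eqref{eq: ass-rho}, and mollify $\mu$ so as to obtain smooth, compactly supported, nonnegative densities $u_{0,n}$ with $\int_{\mathbb{R}^d}\rho_\varepsilon u_{0,n}\,\mathrm{d}x=\mu(\mathbb{R}^d)$ and $\rho_\varepsilon u_{0,n}\,\mathrm{d}x\to\mu$ narrowly. For each pair $(n,\varepsilon)$, the regularized weight $\rho_\varepsilon$ fits within the existence theory of \cite{PT1, PT2}, which supplies a nonnegative weak solution $u_{n,\varepsilon}$ enjoying the $L^1_{\rho_\varepsilon}$-contraction and mass conservation.

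The core a priori bound is the $L^1_\rho$-to-$L^\infty$ smoothing \eqref{eq: smoothing-effect-general}, which I would establish by Moser iteration. Testing the equation against $u^{p-1}$ and using \eqref{eq: id-parti-nonlocal-Hs-Hs} yields
\begin{equation*}
\frac{1}{p}\frac{\mathrm{d}}{\mathrm{d}t}\int_{\mathbb{R}^d} u^p\,\rho\,\mathrm{d}x=-\frac{4m(p-1)}{(p+m-1)^2}\left\|(-\Delta)^{s/2}\bigl(u^{(p+m-1)/2}\bigr)\right\|_{L^2}^2,
\end{equation*}
which, coupled with the weighted Hardy--Sobolev inequality
\begin{equation*}
\left(\int_{\mathbb{R}^d}|f|^q\,\rho\,\mathrm{d}x\right)^{2/q}\leq \mathcal{C}\left\|(-\Delta)^{s/2}f\right\|_{L^2}^2,\qquad q=\tfrac{2(d-\gamma)}{d-2s},
\end{equation*}
whose validity is precisely the reason for the restriction $\gamma\leq d-2s$, produces a Nash-type differential inequality. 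Starting from the conserved mass $\mu(\mathbb{R}^d)$ and iterating over $p_k\to\infty$ yields \eqref{eq: smoothing-effect-general}; the exponents $\alpha$ and $\beta$ are forced by the two-parameter scaling symmetry $u(x,t)\mapsto A\,u(\lambda x,A^{m-1}\lambda^{2s-\gamma}t)$ of the model problem $|x|^{-\gamma}u_t+(-\Delta)^s(u^m)=0$, and the Sobolev constant $\mathcal{C}$ is uniform in $\varepsilon$ thanks to $\rho_\varepsilon\leq \rho$. The energy identity \eqref{eq: prima-esistenza-energy-1} follows immediately from testing the equation against $u^m$, while \eqref{eq: prima-esistenza-energy-2} is obtained via the B\'enilan--Crandall monotonicity bound $u_t\geq -u/[(m-1)t]$ applied at level $t_1/2$, which converts the pointwise control on $u_t$ into the quadratic control on $z_t$ with the stated dependence on the $L^{m+1}_\rho$-energy.

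Passage to the limit $(n,\varepsilon)\to(\infty,0)$ relies on three uniform bounds: the smoothing gives equi-boundedness in $L^\infty_{\rm loc}((0,\infty);L^\infty(\mathbb{R}^d))$; the energy identity controls $u^m$ in $L^2_{\rm loc}((0,\infty);\dot{H}^s(\mathbb{R}^d))$; and \eqref{eq: prima-esistenza-energy-2} controls the time derivative in a weighted $L^2$-norm. An Aubin--Lions argument adapted to the weighted setting then produces strong local convergence of a subsequence along which \eqref{eq: weak-sol-eq-debole} passes to the limit. Mass conservation is transferred to $u$ by testing the approximate equation against a sequence of radial cutoffs $\chi_R$ and exploiting the $|x|^{-\gamma}$ tail of $\rho$ together with the uniform smoothing to annihilate the error arising from $(-\Delta)^{s/2}\chi_R$ as $R\to\infty$. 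Finally, the initial-trace condition \eqref{eq: weak-sol-initial} is first obtained in $\sigma(\mathcal{M},C_c)$ from the weak formulation and the narrow convergence of $\rho_\varepsilon u_{0,n}$, then upgraded to $\sigma(\mathcal{M},C_b)$ via the preserved total mass $\int\rho u(\cdot,t)\,\mathrm{d}x=\mu(\mathbb{R}^d)$ and \cite[Proposition 1.80]{AFP}.

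The most delicate point will be the uniformity in $\varepsilon$ of the Moser iteration, as the weighted Hardy--Sobolev inequality must hold with constants stable under the desingularization $\rho_\varepsilon\to\rho$; this is precisely what breaks down beyond $\gamma=d-2s$ and explains the hypothesis on $\gamma$. A further genuine complication is that \cite{PT1, PT2} do not accommodate singular weights, so the two-parameter approximation is unavoidable: the outer $\varepsilon\to 0$ limit cannot be sidestepped by applying an existing theorem directly to the singular weight $\rho$.
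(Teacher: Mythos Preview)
Your architecture matches the paper's closely: a two-stage approximation (desingularize the weight, then regularize the datum), Moser iteration built on Stroock--Varopoulos plus the weighted Sobolev/CKN inequality for the smoothing, energy identities from testing against $u^m$, the B\'enilan--Crandall bound for \eqref{eq: prima-esistenza-energy-2}, and an Aubin--Lions--type compactness to pass to the limit in \eqref{eq: weak-sol-eq-debole}. All of this is correct and essentially what the paper does (the paper separates the two limits rather than taking them simultaneously, but that is cosmetic).

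The genuine gap is in your treatment of the initial condition \eqref{eq: weak-sol-initial}. Saying it ``is first obtained in $\sigma(\mathcal{M},C_c)$ from the weak formulation and the narrow convergence of $\rho_\varepsilon u_{0,n}$'' glosses over the central analytical difficulty: the weak formulation \eqref{eq: weak-sol-eq-debole} involves test functions supported away from $t=0$, and the energy $\int_{t_1}^{T}\|(-\Delta)^{s/2}(u^m)\|_2^2$ blows up like $t_1^{-\alpha m}$ as $t_1\to 0$, so you cannot simply extend the formulation down to $t=0$ and read off the trace. There is also a circularity in your last step: you invoke mass conservation $\int\rho u(\cdot,t)=\mu(\mathbb{R}^d)$ to upgrade $C_c$-convergence to $C_b$-convergence, but establishing that the conserved quantity equals $\mu(\mathbb{R}^d)$ (rather than some smaller number, which is all Fatou gives from the approximations) already requires knowing the initial trace.

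The paper resolves this with an argument you did not mention: it passes to the Riesz potential $U(\cdot,t)=I_{2s}\ast(\rho u(\cdot,t))$, for which the equation becomes $U_t=-u^m$. The key estimate is then
\[
\left|\int U_\varepsilon(\cdot,t_2)\phi-\int U^{\mu_\varepsilon}\phi\right|\le \|\rho^{-1}\phi\|_\infty\, K^{m-1}\mu(\mathbb{R}^d)^{1+\beta(m-1)}\int_0^{t_2}t^{-\alpha(m-1)}\,\mathrm{d}t,
\]
which is finite and vanishes as $t_2\to 0$ precisely because $\alpha(m-1)<1$. Combined with the monotonicity of $U(\cdot,t)$ in $t$ and Landkof's potential-theoretic identification theorems (a measure is determined by its Riesz potential), this yields $\rho u(t)\to\mu$ in $\sigma(\mathcal{M},C_c)$ and simultaneously the inequality $\|u(t)\|_{1,\rho}\le\mu(\mathbb{R}^d)$; lower semicontinuity then closes the loop and gives the $C_b$-convergence without circularity. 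Your outline needs either this Riesz-potential step or an explicit substitute (e.g.\ integrating the equation against $(-\Delta)^s\phi$ rather than $(-\Delta)^{s/2}\phi$, splitting the time integral at $\delta$, and using $\int_0^\delta\|u^m(t)\|_{1,\rho}\,\mathrm{d}t\lesssim\delta^{1-\alpha(m-1)}$ for the small-time tail).
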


The method of proof of Theorem \ref{thm: teorema-esistenza} allows us to prove the following result on existence and uniqueness of the initial trace, in the spirit of \cite[Section 7]{BV} and \cite{BPSV}.
\begin{thm}\label{thm: initial}
Let $d>2s$ and assume that $\rho$ satisfies \eqref{eq: ass-rho} for some $\gamma \in [0,2s) \cap [0,d-2s]$ and $ \gamma_0 \in [0,\gamma] $. Consider a weak solution $u$ to $\rho(x) u_t + (-\Delta)^s\left( u^m \right) = 0$ in the sense that $u$ satisfies \eqref{eq:
weak-sol-spazi-u-1}, \eqref{eq: weak-sol-spazi-u-2} and \eqref{eq:
weak-sol-eq-debole}. Then there exists a unique positive finite Radon measure $\mu$ which is the
initial trace of $u$ in the sense of \eqref{eq: weak-sol-initial}.
The same result holds if the condition $u\in L^\infty( \mathbb{R}^d \times (\tau , \infty ) )$ in \eqref{eq:
weak-sol-spazi-u-1} is replaced by the weaker condition $\int_{t_1}^{t_2} u^m(\cdot,\tau)\,\mathrm{d}\tau\in L^1_\rho({\mathbb R}^d)$ for all $t_2>t_1>0$. In particular, $\mu\left({\mathbb R}^d\right)=\int_{{\mathbb R}^d}u(x,t)\rho(x){\rm d}x$ for all $t>0$.
\end{thm}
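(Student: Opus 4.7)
The uniqueness part of the theorem is immediate: \eqref{eq: weak-sol-initial} asks for convergence in $\sigma(\mathcal{M}(\mathbb{R}^d), C_b(\mathbb{R}^d))$, and pairing with $C_b$ separates finite Radon measures, so at most one positive $\mu$ can realize the trace. The real content is the existence.

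My plan for existence is a compactness-plus-identification argument. Set $\mu_t := \rho(\cdot)\,u(\cdot,t) \in \mathcal{M}(\mathbb{R}^d)$. By \eqref{eq: weak-sol-spazi-u-1}, $M := \sup_{t>0} \mu_t(\mathbb{R}^d) < \infty$, so the compactness criterion of Section \ref{sec: not-def} provides, along every sequence $t_n \to 0^+$, a subsequence with $\mu_{t_{n_k}} \to \nu$ in $\sigma(\mathcal{M}, C_c)$ for some positive finite Radon measure $\nu$. To pin $\nu$ down I would exhibit a monotone-in-time quantity built from the equation via Riesz potentials: for $\psi \in C_c^\infty(\mathbb{R}^d \setminus \{0\})$ with $\psi \ge 0$, let $\Phi := I_{2s} \ast (\rho\psi)$, so that $\rho\psi$ is bounded with compact support, $\Phi \in C_0(\mathbb{R}^d) \cap \dot H^s(\mathbb{R}^d)$, and $(-\Delta)^s \Phi = \rho\psi$. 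Approximating $\Phi$ by elements of $\mathcal{D}(\mathbb{R}^d)$ through truncation and convolution, and passing to the limit in \eqref{eq: weak-sol-eq-debole} by means of \eqref{eq: id-parti-nonlocal-Hs-Hs} and \eqref{eq: weak-sol-spazi-u-2}, one obtains, for all $0 < t_1 < t_2$,
\[
\int_{\mathbb{R}^d} u(x,t_2)\,\Phi(x)\,\rho(x)\,\mathrm{d}x - \int_{\mathbb{R}^d} u(x,t_1)\,\Phi(x)\,\rho(x)\,\mathrm{d}x = -\int_{t_1}^{t_2}\!\int_{\mathbb{R}^d} u^m(x,\tau)\,\rho(x)\,\psi(x)\,\mathrm{d}x\,\mathrm{d}\tau \le 0,
\]
so $t \mapsto \int u(t)\,\Phi\,\rho\,\mathrm{d}x$ is nonincreasing on $(0,\infty)$ and bounded by $M\|\Phi\|_\infty$; it therefore admits a finite limit $L(\psi)$ at $t = 0^+$. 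Because $\Phi \in C_0(\mathbb{R}^d)$, once tightness of $\{\mu_t\}$ is known (see below), one passes to the limit along the subsequence to conclude $\int \Phi\,\mathrm{d}\nu = L(\psi)$; then injectivity of Riesz potentials on positive finite Radon measures (\cite{Landkof}) ensures that these values, as $\psi$ varies in the prescribed class, determine $\nu$ uniquely. Hence $\nu = \mu$ is independent of the subsequence, and $\mu_t \to \mu$ in $\sigma(\mathcal{M}, C_c)$.

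Mass conservation $\mu_t(\mathbb{R}^d) = \mu(\mathbb{R}^d)$ for every $t > 0$, needed both for tightness and, via \cite[Proposition 1.80]{AFP}, to upgrade the convergence to $\sigma(\mathcal{M}, C_b)$, I would obtain by testing \eqref{eq: weak-sol-eq-debole} against a radial cutoff $\xi_R$ equal to $1$ on $B_R$ and supported in $B_{2R}$: the pointwise bound $|(-\Delta)^s\xi_R(x)| \lesssim R^{-2s}(1+|x|/R)^{-d-2s}$, combined with either the $L^\infty$ bound on $u^m$ for $t>\tau_0$ (first version of the theorem) or the hypothesis $\int_{t_1}^{t_2} u^m\,\mathrm{d}\tau \in L^1_\rho$ (second version, after dominating $(-\Delta)^s\xi_R$ by $\rho$ at infinity thanks to $\gamma < 2s$), makes the error $\int_{t_1}^{t_2}\!\int u^m\,(-\Delta)^s\xi_R\,\mathrm{d}x\,\mathrm{d}\tau$ vanish as $R \to \infty$. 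The main obstacle I foresee is the approximation of $\Phi$: one must build compactly supported smooth $\Phi_k \to \Phi$ that converge in $\dot H^s$ (so as to pair with $u^m \in L^2_{\mathrm{loc}}((0,\infty);\dot H^s)$) while keeping the cross-terms $\int u\,\Phi_k\,\rho\,\mathrm{d}x$ and $\int u^m\,\rho\,\psi\,\mathrm{d}x$ under uniform control. This forces a delicate matching of the cutoff scale with the $|x|^{-(d-2s)}$ decay of $\Phi$, especially in the weighted regime $\gamma \in [0, 2s) \cap [0, d-2s]$, $\gamma_0 \in [0,\gamma]$, where both the possible singularity of $\rho$ at the origin and its decay at infinity must be absorbed without spoiling the $\dot H^s$ estimate.
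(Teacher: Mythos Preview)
Your overall strategy---compactness in $\sigma(\mathcal{M},C_c)$ plus identification of the limit via Riesz potentials, then upgrade to $\sigma(\mathcal{M},C_b)$ through mass considerations---is the paper's. Two points deserve comment.

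First, your monotonicity argument is the dual of the paper's, and the paper's version bypasses the approximation obstacle you flag. Rather than constructing $\Phi=I_{2s}\ast(\rho\psi)$ and approximating it in $\dot H^s$, the paper tests \eqref{eq: weak-sol-eq-debole} with $\varphi(y,t)=\vartheta(t)\phi(y)$ for $\phi\in\mathcal{D}(\mathbb{R}^d)$, integrates by parts in space, and obtains \eqref{eq: u-tempo-bis-a}; \emph{then} it integrates both sides against $I_{2s}(x)$ and applies Fubini to reach \eqref{eq: u-tempo-4-a}, namely $\int U(t_2)\phi-\int U(t_1)\phi=-\int_{t_1}^{t_2}\!\int u^m\phi$. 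Fubini is justified by Lemmas~\ref{lem: decay-conv} and~\ref{lem:decay-lap-1}, the assumption $\gamma\le d-2s$, and the hypothesis $\int_{t_1}^{t_2}u^m\,\mathrm{d}\tau\in L^1_\rho$. This yields monotonicity of $U(\cdot,t)$ without ever needing $\Phi$ as an admissible test function, so the delicate $\dot H^s$-approximation you worry about does not arise.

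Second, there is a genuine gap in your mass step. Testing with $\xi_R$ and sending $R\to\infty$, exactly as in Proposition~\ref{oss: cons-mass}, shows only that $t\mapsto\int u(t)\rho\,\mathrm{d}x$ is \emph{constant} on $(0,\infty)$, say equal to $c$; it does not give $c=\mu(\mathbb{R}^d)$. Weak$^\ast$ lower semi-continuity along the subsequence gives $\mu(\mathbb{R}^d)\le c$, but the reverse inequality cannot be read off the cutoff estimate, because the error term involves $\int_{t_1}^{t_2}\!\int u^m\rho$, which is not assumed to remain bounded as $t_1\downarrow 0$. The paper closes this gap with Lemma~\ref{eq: lemma-massa-potenziali}: monotonicity of potentials gives $U^\mu-U(\cdot,t)\ge 0$ a.e., so applying that lemma to the signed measure $\mathrm{d}\nu=\mathrm{d}\mu-u(x,t)\rho(x)\,\mathrm{d}x$ yields $\mu(\mathbb{R}^d)\ge\int u(t)\rho\,\mathrm{d}x=c$. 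This is the missing ingredient both for the identity $\mu(\mathbb{R}^d)=c$ in the statement and for the upgrade to $\sigma(\mathcal{M},C_b)$.
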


As for uniqueness of weak solutions we have the next result.
\begin{thm}\label{thm: teorema-uniqueness}
Let $d>2s$ and assume that $\rho$ satisfies \eqref{eq: ass-rho} for some $\gamma \in [0,2s) \cap [0,d-2s]$ and $ \gamma_0 \in [0,\gamma] $. Let $u_1, u_2$
be two weak solutions to \eqref{eq: barenblatt-regular} in the sense of
Definition \ref{eq: weak-sol-1}. Suppose that they take as
initial datum the same positive finite Radon measure $\mu$, in the sense
of \eqref{eq: weak-sol-initial}. Then $u_1=u_2$.
\end{thm}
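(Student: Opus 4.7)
The plan is to adapt M.~Pierre's duality method \cite{Pierre} to the present nonlocal, weighted setting. Given two weak solutions $u_1,u_2$ with the same initial trace $\mu$, set $w:=u_1-u_2$ and define $a:=(u_1^m-u_2^m)/(u_1-u_2)$, with $a\equiv 0$ on $\{u_1=u_2\}$, so that $u_1^m-u_2^m=aw$ and $0\le a\in L^\infty(\mathbb{R}^d\times(\tau,\infty))$ for every $\tau>0$ by the smoothing effect \eqref{eq: smoothing-effect-general}. Subtracting the two weak formulations and using \eqref{eq: id-parti-nonlocal-Hs-Hs}, one obtains for any sufficiently regular test function $\varphi$ vanishing at $t=0$ and $t=T$ the identity
\[
\int_0^T\!\!\int_{\mathbb{R}^d} aw\,(-\Delta)^s\varphi\,\mathrm{d}x\mathrm{d}t=\int_0^T\!\!\int_{\mathbb{R}^d}\rho\,w\,\varphi_t\,\mathrm{d}x\mathrm{d}t.
\]
For each fixed $\psi\in C^\infty_c(\mathbb{R}^d\times(0,T))$ the strategy is to choose $\varphi$ solving the backward dual problem
\[
-\rho\,\varphi_t+a\,(-\Delta)^s\varphi=\rho\,\psi\quad\text{in }\mathbb{R}^d\times(0,T),\qquad \varphi(\cdot,T)=0;
\]
substitution would then give $\int\!\!\int\rho w\psi\,\mathrm{d}x\mathrm{d}t=0$ for every $\psi$, whence $u_1=u_2$.

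Since $a$ may vanish and is not smooth, I would first replace it by measurable approximations $a_n$ satisfying $\varepsilon_n\le a_n\le M$ and $a_n\to a$ in a suitable sense, and solve the regularized dual problem
\[
-\rho\,\varphi_{n,t}+a_n\,(-\Delta)^s\varphi_n=\rho\,\psi,\qquad \varphi_n(\cdot,T)=0.
\]
In reversed time this is a forward linear parabolic Cauchy problem governed by $a_n\rho^{-1}(-\Delta)^s$, and it is solvable via semigroup methods thanks to the fact that $\rho^{-1}(-\Delta)^s$ is essentially self-adjoint, Markovian and $L^p$-contractive on $L^p_\rho(\mathbb{R}^d)$ in the range $\gamma\in[0,d-2s]\cap[0,2s)$: these are precisely the properties established in Appendix B. Multiplying the equation for $\varphi_n$ by $(-\Delta)^s\varphi_n$ and by $\varphi_{n,t}$ yields an energy estimate of the form
\[
\int_0^T\!\!\int_{\mathbb{R}^d}a_n\bigl|(-\Delta)^s\varphi_n\bigr|^2\mathrm{d}x\mathrm{d}t+\int_0^T\!\!\int_{\mathbb{R}^d}\rho\,\varphi_{n,t}^{\,2}\,\mathrm{d}x\mathrm{d}t\le C(\psi)
\]
uniformly in $n$, together with $0\le\varphi_n\le C$ and $\varphi_n(\cdot,0)\in C_b(\mathbb{R}^d)$ by the Markov property and the $L^1$--$L^\infty$ smoothing of the underlying linear semigroup.

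Plugging $\varphi_n$ (after a harmless time-mollification that makes it an admissible test function as in Definition \ref{eq: weak-sol-1}) into the identity for $w$, the contribution at $t=T$ is killed by $\varphi_n(\cdot,T)=0$, whereas the contribution at $t=0$ vanishes because $u_1$ and $u_2$ share the \emph{same} initial trace $\mu$ in $\sigma(\mathcal{M}(\mathbb{R}^d),C_b(\mathbb{R}^d))$ and $\varphi_n(\cdot,0)$ is bounded and continuous. One is then left with
\[
\int_0^T\!\!\int_{\mathbb{R}^d}\rho\,w\,\psi\,\mathrm{d}x\mathrm{d}t=\int_0^T\!\!\int_{\mathbb{R}^d}(a-a_n)\,w\,(-\Delta)^s\varphi_n\,\mathrm{d}x\mathrm{d}t,
\]
and Cauchy--Schwarz against the weight $\sqrt{a_n}$, combined with the uniform energy bound, the inequality $|w|\le u_1+u_2$, the smoothing effect, and a careful choice of $a_n$ so that $(a-a_n)^2/a_n$ can be absorbed on $\{w\neq 0\}$, sends the right-hand side to zero as $n\to\infty$. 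Arbitrariness of $\psi$ then forces $w\equiv 0$.

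The main obstacle is the construction and the uniform control of the dual solution $\varphi_n$ in the nonlocal weighted setting. Two intertwined difficulties arise: first, the fine linear theory for $\rho^{-1}(-\Delta)^s$ on $L^2_\rho$ must be available \emph{a priori} (this is the role of Appendix B) and in turn dictates the structural restriction $\gamma\in[0,d-2s]\cap[0,2s)$; second, even when $\psi$ is compactly supported, $\varphi_n$ and $(-\Delta)^s\varphi_n$ decay only polynomially, so the local cutoff tricks used in \cite{Pierre} and in the fractional $L^1$-data case of \cite{VazConstr,BV} are not available and must be replaced by arguments based on Riesz-potential representations and the decay of $\rho$ at infinity, which is exactly the genuinely new point compared with the existing literature.
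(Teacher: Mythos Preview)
Your proposal correctly identifies Pierre's duality as the skeleton, and the role of the self-adjointness and Markov theory of $\rho^{-1}(-\Delta)^s$ is right. But there is a genuine gap at precisely the point you flag as ``the main obstacle'', and it is more serious than you indicate.

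The claim that $\varphi_n(\cdot,0)\in C_b(\mathbb{R}^d)$ is unsupported. The Markov property and $L^p$-contractivity give $L^\infty$ bounds on $\varphi_n$, nothing more; obtaining \emph{continuity} of $\varphi_n(\cdot,0)$ would require a Feller-type property (or H\"older regularity) for the evolution generated by $a_n\rho^{-1}(-\Delta)^s$, with a merely measurable coefficient $a_n$ and a weight $\rho$ that may be singular at the origin and degenerate at infinity. No such result is available here, and the paper does not prove one. Without $\varphi_n(\cdot,0)\in C_b$ you cannot invoke \eqref{eq: weak-sol-initial}, which is convergence in $\sigma(\mathcal{M}(\mathbb{R}^d),C_b(\mathbb{R}^d))$, to kill the boundary contribution at $t=0$. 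There is a second, related difficulty: after passing to the limit in $n$ (and in the regularisation parameter), the dual objects only survive as positive finite Radon measures, not as functions, so even an a.e.\ statement about $w(\cdot,0^+)$ would be useless against the limit.

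The paper circumvents this by a different maneuver. It never works with $w=u_1-u_2$, but with the difference of \emph{Riesz potentials}
\[
g(x,t)=U_2(x,t+h)-U_1(x,t),
\]
introducing a time shift $h>0$. Potentials of $\rho u_i(t)$ are automatically in $C_b(\mathbb{R}^d)$ (Lemma~\ref{lem: potential-basic-prop-1}), and the equation for $g$ holds \emph{pointwise a.e.}, so one can pair it directly with the dual $\psi_{n,\varepsilon}$, which solves $\rho\psi_t=(-\Delta)^s[(a_n+\varepsilon)\psi]$ with final datum $\psi\in\mathcal{D}_+(\mathbb{R}^d)$; here $a_n$ is taken piecewise constant in $t$ and smooth in $x$ so that the problem reduces to a concatenation of Markov semigroups (Lemma~\ref{lem: approx}). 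After the limits $n\to\infty$ and $\varepsilon\to0$, the dual at time $t$ is only a positive Radon measure $\nu(t)$, and at $t\downarrow 0$ one gets a bare measure $\nu$. The argument therefore needs $g(\cdot,0)\le 0$ \emph{everywhere}, not just a.e.: this is secured by monotonicity of potentials in time together with Landkof's potential theory (Lemmas~\ref{lem: monotonicity} and \ref{lem: potenziale}), which give $U_1(x,t)\uparrow U^\mu(x)$ for \emph{every} $x$, hence $g(x,0)=U_2(x,h)-U^\mu(x)\le 0$ everywhere and $\int g(\cdot,0)\,\mathrm{d}\nu\le 0$. Your direct approach with $w$ has no analogue of this pointwise sign control at $t=0$, and no substitute for the $C_b$ regularity that the potential representation supplies for free.
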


\begin{oss}\rm
Notice that, if $d\ge4s$, then the assumptions on $\gamma$ reduce to $\gamma\in[0,2s)$.
\end{oss}

Let us stress that, in order to prove Theorem \ref{thm: teorema-uniqueness}, we shall crucially exploit the properties of the operator $A=\rho^{-1}\,(-\Delta)^s$ contained in Theorem \ref{thm: self-adj} and Proposition \ref{pro: laplaciano-Lp} below. Such results are of independent interest; their proofs will be just sketched, to keep the paper in a reasonable length, in Appendix \ref{sect: app-operatore}. Some further details and extentions are given in \cite{Nota-oper}.

\begin{den}\label{den: spazio-Xs}
Let $d>2s $ and assume that $\rho$ satisfies \eqref{eq: ass-rho}
for some $\gamma \in [0,2s)$ and $ \gamma_0 \in
[0,d) $. We denote as $X_{s,\rho}$ the Hilbert space of all
functions $v \in L^2_{\rho}(\mathbb{R}^d)$ such that
$(-\Delta)^s(v)$ (as a distribution) belongs to
$L^2_{\rho^{-1}}(\mathbb{R}^d)$, equipped with the norm
\begin{equation*}
\left\| v \right\|_{X_{s,\rho}}^2 := \left\| v
\right\|_{2,\rho}^2 + \left\| (-\Delta)^s(v)
\right\|_{2,\rho^{-1}}^2 \ \ \ \forall v \in X_{s,\rho} \, .
\end{equation*}
\end{den}
\begin{thm}\label{thm: self-adj}
Let $d>2s $ and assume that $\rho$ satisfies \eqref{eq: ass-rho} for some $ \gamma \in [0,2s) $ and $ \gamma_0 \in [0,d) $. Let $A: D(A):=X_{s,\rho}
\subset L^2_{\rho}(\mathbb{R}^d) \rightarrow
L^2_{\rho}(\mathbb{R}^d)$ be the operator
\begin{equation*}
A(v):=\rho^{-1}\,(-\Delta)^s(v)  \ \ \ \forall v \in  X_{s,\rho}
\, .
\end{equation*}
Then $A$ is densely defined, positive and self-adjoint on
$L^2_{\rho}(\mathbb{R}^d)$, and the quadratic form associated
to it is
\begin{equation*}
Q(v,v):=\frac{C_{d,s}}{2} \int_{\mathbb{R}^d} \int_{\mathbb{R}^d}
\frac{(v(x)-v(y))^2}{|x-y|^{d+2s}} \, \mathrm{d}x \mathrm{d}y
\end{equation*}
with domain $ D(Q):=L^2_{\rho}(\mathbb{R}^d) \cap \dot{H}^s(\mathbb{R}^d)$. Moreover, $ Q$ is a Dirichlet form on
$L^2_{\rho}(\mathbb{R}^d)$ and $ A $ generates a Markov
semigroup $S_2(t)$ on $L^2_{\rho}(\mathbb{R}^d)$. In particular,
for all $p\in[1,\infty]$ there exists a contraction semigroup
$S_p(t)$ on $L^p_{\rho}(\mathbb{R}^d)$, consistent with
$S_2(t)$ on $L^2_{\rho}(\mathbb{R}^d)\cap
L^p_{\rho}(\mathbb{R}^d)$, which is furthermore analytic with a
suitable angle $\theta_p>0$ for $p\in(1,\infty)$.
\end{thm}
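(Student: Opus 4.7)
My plan is to follow the standard framework of closed positive quadratic forms on a Hilbert space: I would first build a self-adjoint operator $\tilde A$ from $Q$ via general form theory, then identify $\tilde A$ with $A$ and $D(\tilde A)$ with $X_{s,\rho}$, and finally deduce the Dirichlet form property together with the $L^p_\rho$ and analyticity statements by invoking Beurling--Deny and Stein's theorem.

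First I would check that $D(Q)=L^2_\rho(\mathbb{R}^d)\cap\dot H^s(\mathbb{R}^d)$ is dense in $L^2_\rho(\mathbb{R}^d)$: the condition $\gamma_0<d$ ensures $\rho\in L^1_{\rm loc}(\mathbb{R}^d)$, so $\mathcal{D}(\mathbb{R}^d)$ is dense in $L^2_\rho(\mathbb{R}^d)$ by standard mollification plus truncation, and $\mathcal{D}(\mathbb{R}^d)\subset D(Q)$ by the very definition of $\dot H^s(\mathbb{R}^d)$. Next I would show that $Q$ is closed on $D(Q)$ with respect to the graph norm $\|v\|_{2,\rho}^2+Q(v,v)$: this reduces to the completeness of $\dot H^s(\mathbb{R}^d)$ as a Hilbert space together with Fatou's lemma applied to the Gagliardo kernel in \eqref{eq: id-parti-nonlocal-Hs-Hs}. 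Having a densely defined, closed, positive symmetric form, classical form theory provides a unique positive self-adjoint operator $\tilde A$ on $L^2_\rho(\mathbb{R}^d)$ with $D(\tilde A)\subset D(Q)$ and $\langle\tilde A v,w\rangle_\rho=Q(v,w)$ for every $v\in D(\tilde A)$, $w\in D(Q)$.

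I would then identify $\tilde A=A$ with $D(\tilde A)=X_{s,\rho}$. For $v\in D(\tilde A)$ and $\phi\in\mathcal{D}(\mathbb{R}^d)$, identity \eqref{eq: id-parti-nonlocal-Hs-Hs} yields
\begin{equation*}
\int_{\mathbb{R}^d}\tilde A(v)\,\phi\,\rho\,\mathrm{d}x=Q(v,\phi)=\int_{\mathbb{R}^d}v\,(-\Delta)^s(\phi)\,\mathrm{d}x,
\end{equation*}
so $(-\Delta)^s v=\rho\,\tilde A v$ in $\mathcal{D}'(\mathbb{R}^d)$, with $\rho\,\tilde A v\in L^2_{\rho^{-1}}(\mathbb{R}^d)$; thus $v\in X_{s,\rho}$ and $\tilde A v=A v$. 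Conversely, for $v\in X_{s,\rho}$ one knows $v\in L^2_\rho$ and $(-\Delta)^s v=\rho\,A v\in L^2_{\rho^{-1}}$ in the distributional sense; a mollification plus exhaustion-by-cut-offs argument (preserving the $X_{s,\rho}$ structure by the two-sided bounds on $\rho$) upgrades this to $v\in\dot H^s(\mathbb{R}^d)$ and to the identity $Q(v,w)=\langle A v,w\rangle_\rho$ for all $w\in D(Q)$, so $v\in D(\tilde A)$ and $\tilde A=A$.

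Once $A$ is identified, the Dirichlet form property of $Q$ is immediate from the Gagliardo representation, since $|T(v)(x)-T(v)(y)|\le|v(x)-v(y)|$ for every normal contraction $T$, forcing $Q(T v,T v)\le Q(v,v)$ on $D(Q)$. By Beurling--Deny, $-A$ generates a Markov semigroup $S_2(t)$ on $L^2_\rho(\mathbb{R}^d)$; this semigroup extends by contractivity to $L^\infty(\mathbb{R}^d)$ and, by duality with respect to the measure $\rho\,\mathrm{d}x$ (which makes $A$ formally symmetric), to $L^1_\rho(\mathbb{R}^d)$. Riesz--Thorin interpolation then furnishes the consistent contraction semigroups $S_p(t)$ on every $L^p_\rho(\mathbb{R}^d)$, while analyticity in a sector for $p\in(1,\infty)$ is a standard consequence of Stein's interpolation theorem for symmetric sub-Markov semigroups.

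The hardest step will be the reverse inclusion $X_{s,\rho}\subset D(\tilde A)$: a priori, a function $v\in L^2_\rho(\mathbb{R}^d)$ whose distributional $s$-Laplacian lies in $L^2_{\rho^{-1}}(\mathbb{R}^d)$ need not itself belong to $\dot H^s(\mathbb{R}^d)$, and the naive pairing $\langle(-\Delta)^s v,v\rangle=\|v\|_{\dot H^s}^2$ must be rigorously justified. The approximation scheme must deal simultaneously with the possible singularity of $\rho$ at the origin and with its polynomial decay at infinity, and this is where the restriction $\gamma<2s$ (together with a Hardy-type fractional inequality compatible with the behaviour of $\rho$) plays a decisive role.
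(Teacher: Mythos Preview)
Your proposal is correct and arrives at the same conclusions, but the organisation is reversed relative to the paper. The paper works ``operator first'': it takes $A$ on $D(A)=X_{s,\rho}$, proves directly that $A$ is symmetric and then self-adjoint by checking $D(A^\ast)\subset X_{s,\rho}$, and only afterwards identifies the associated form $Q$ and its domain as the closure of $X_{s,\rho}$ in the form norm. You instead go ``form first'': build $\tilde A$ from $Q$ via Kato's representation theorem, then identify $D(\tilde A)=X_{s,\rho}$. Both routes hinge on the same hard technical point, which you correctly single out: showing that $v\in X_{s,\rho}$ forces $v\in\dot H^s(\mathbb{R}^d)$. The paper proves this via the a~priori inequality
\[
\frac{C_{d,s}}{2}\int_{\mathbb{R}^d}\int_{\mathbb{R}^d}\frac{(w(x)-w(y))^2}{|x-y|^{d+2s}}\,\mathrm{d}x\,\mathrm{d}y\le\int_{\mathbb{R}^d}w(x)\,(-\Delta)^s(w)(x)\,\mathrm{d}x
\]
for $w\in C^\infty(\mathbb{R}^d)\cap X_{s,\rho}$, obtained by inserting $\xi_R w$ in the integration-by-parts formula and absorbing the remainder terms into the left-hand side; it is here that $\gamma<2s$ enters, through the fact that $\|\rho^{-1}(-\Delta)^s(\xi_R)\|_\infty$ and $\|\rho^{-1}l_s(\xi_R)\|_\infty$ vanish as $R\to\infty$ (Lemmas~\ref{lem:decay-lap-1}--\ref{lem:decay-lap-cutoff}). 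Your phrase ``Hardy-type fractional inequality'' slightly overshoots: no Hardy inequality is needed, only these decay estimates on the cut-off; otherwise your sketch of the approximation is the right one. The Dirichlet form, Markov, $L^p_\rho$-extension and analyticity statements are handled identically in both approaches, by appeal to the general theory in \cite{D}.
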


\section{Existence of weak solutions}\label{sect: existence}
We start showing a direct consequence of Definition \ref{eq:
weak-sol-1}, namely the conservation in time of the ``mass''
$\int_{\mathbb{R}^d}u(x,t) \, \rho(x) \mathrm{d}x $ (recall that we are considering
nonnegative solutions).
\begin{pro}\label{oss: cons-mass}  \rm
Let $d>2s $ and assume that $\rho$ satisfies \eqref{eq: ass-rho}
for some $\gamma \in [0,2s) \cap [0,d-2s]$ and $ \gamma_0 \in
[0,\gamma] $. Let $ u $ be a weak solution to \eqref{eq:
barenblatt-regular} according to Definition \ref{eq: weak-sol-1}.
Then
\begin{equation}\label{eq: cons-mass}
\left\| u(t) \right\|_{1,\rho} = \int_{\mathbb{R}^d} u(x,t) \,
\rho(x) \mathrm{d}x = \mu(\mathbb{R}^d) \ \ \
\textrm{for a.e.} \ t>0 \, ,
\end{equation}
namely we have \emph{conservation of mass}.
\begin{proof}
We plug into \eqref{eq:
weak-sol-eq-debole} the test function $
\varphi_{R}(x,t) := \vartheta(t) \xi_R(x)$,
where $ \xi_R $ is the same cut-off function as in Lemma
\ref{lem:decay-lap-cutoff} and $
\vartheta$ is a suitable positive, regular and compactly supported
approximation of $ \chi_{[t_1,t_2]}$ (let $
t_2>t_1>0 $). Using \eqref{eq: id-parti-nonlocal-Hs-Hs}, Lemma \ref{lem:decay-lap-1},
Lemma \ref{lem:decay-lap-cutoff} and letting $ \vartheta \to \chi_{[t_1,t_2]} $ in \eqref{eq: weak-sol-eq-debole}, it is straightforward to obtain the following estimate:
\begin{equation}\label{eq: uniq-trace-cons-mass}
\begin{aligned}
& \left| \int_{\mathbb{R}^d}  u(x,t_2) \xi_R(x) \, \rho(x) \mathrm{d}x - \int_{\mathbb{R}^d}  u(x,t_1) \xi_R(x) \, \rho(x)  \mathrm{d}x  \, \right| \\
\le &  c^{-1} \left( \frac{1}{R^{2s}} + \frac{1}{R^{2s-\gamma}} \right) \left\| (1+|x|^\gamma) (-\Delta)^{s}(\xi) \right\|_\infty  \int_{t_1}^{t_2} \! \int_{\mathbb{R}^d} u^m(x,t) \, \rho(x) \mathrm{d}x \mathrm{d}t \, ,
\end{aligned}
\end{equation}
where on the r.h.s.\ we exploited the inequality $\rho^{-1}(x) \le c^{-1} \left( 1+|x|^\gamma \right)$ for all $x \in \mathbb{R}^d$,
direct consequence of \eqref{eq: ass-rho}. Letting $ R \to \infty $ in \eqref{eq: uniq-trace-cons-mass} and recalling \eqref{eq: weak-sol-initial} we get the conclusion.\end{proof}
\end{pro}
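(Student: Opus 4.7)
The plan is to test the weak formulation \eqref{eq: weak-sol-eq-debole} against a separable function $\varphi_R(x,t)=\vartheta(t)\xi_R(x)$, with $\xi_R(x)=\xi(x/R)$ a smooth radial cutoff equal to $1$ on $B_R$ and supported in $B_{2R}$, and $\vartheta$ a smooth approximation of $\chi_{[t_1,t_2]}$ for fixed $0<t_1<t_2$ (which can be chosen among a.e. times so that $u(\cdot,t_i)\rho$ is a meaningful measure). First I would use the symmetry identity \eqref{eq: id-parti-nonlocal-Hs-Hs} to move the fractional Laplacian onto $\xi_R$, rewriting the nonlocal contribution as the space-time integral of $u^m\,(-\Delta)^s(\xi_R)$. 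Passing $\vartheta\to\chi_{[t_1,t_2]}$, which is routine thanks to $u\in L^\infty((0,\infty);L^1_\rho)$, yields a balance relating $\int \xi_R\rho\,u(t_2)\,\mathrm{d}x-\int\xi_R\rho\,u(t_1)\,\mathrm{d}x$ to the space-time integral of $u^m\,(-\Delta)^s(\xi_R)$.

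The crux is then to show that this remainder vanishes as $R\to\infty$. Scaling gives $(-\Delta)^s(\xi_R)(x)=R^{-2s}[(-\Delta)^s\xi](x/R)$, and the standard pointwise estimate for the fractional Laplacian of a compactly supported smooth function (which is the content of the lemmas recalled in Appendix A) provides $|(-\Delta)^s\xi(y)|\lesssim (1+|y|)^{-d-2s}$. Combined with the bound $\rho^{-1}(x)\le c^{-1}(1+|x|^\gamma)$ coming from \eqref{eq: ass-rho}, splitting the remainder over $B_R$ and $B_R^c$ produces a control of the form
\[
\bigl(R^{-2s}+R^{\gamma-2s}\bigr)\int_{t_1}^{t_2}\!\!\int_{\mathbb{R}^d} u^m\,\rho\,\mathrm{d}x\,\mathrm{d}t,
\]
where the two prefactors come respectively from the "bulk" and "tail" regions. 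Since $\gamma<2s$, both prefactors tend to $0$; the space-time integral of $u^m\rho$ is finite because the smoothing effect \eqref{eq: smoothing-effect-general} gives $u\in L^\infty(\mathbb{R}^d\times(t_1,t_2))$ and the $L^1_\rho$ bound is built into Definition \ref{eq: weak-sol-1}, so $u^m=u^{m-1}\cdot u$ is integrable against $\rho\,\mathrm{d}x\,\mathrm{d}t$.

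Having obtained $\|u(t_2)\|_{1,\rho}=\|u(t_1)\|_{1,\rho}$ for a.e. $0<t_1<t_2$, the last step is to identify the common value with $\mu(\mathbb{R}^d)$. For this I would send $t_1\to 0^+$: since $\xi_R\in C_b(\mathbb{R}^d)$ for each fixed $R$, the initial-trace condition \eqref{eq: weak-sol-initial} yields $\int\xi_R\rho\,u(t_1)\,\mathrm{d}x\to\int\xi_R\,\mathrm{d}\mu$; then monotone convergence as $R\to\infty$ gives $\int\xi_R\,\mathrm{d}\mu\to\mu(\mathbb{R}^d)$, closing the argument.

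The main obstacle I expect is precisely the interplay between the singular weight and the non-local tail of $(-\Delta)^s(\xi_R)$: the factor $\rho^{-1}$ is bad near the origin and $(-\Delta)^s(\xi_R)$ is not compactly supported, so the remainder does not split naively into a "local" and a "tail" part. The condition $\gamma<2s$ is exactly what makes the two competing scalings cancel; everything else—the vanishing of the time-mollification error, the integrability of $u^m\rho$ in space-time, and the passage to the trace—reduces to the hypotheses of Definition \ref{eq: weak-sol-1} together with standard dominated/monotone convergence.
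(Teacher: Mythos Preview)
Your proposal is correct and follows essentially the same route as the paper: test against $\vartheta(t)\xi_R(x)$, move $(-\Delta)^s$ onto $\xi_R$ via \eqref{eq: id-parti-nonlocal-Hs-Hs}, use the scaling/decay of $(-\Delta)^s\xi_R$ together with $\rho^{-1}\le c^{-1}(1+|x|^\gamma)$ to produce the $(R^{-2s}+R^{\gamma-2s})$ remainder, and then invoke \eqref{eq: weak-sol-initial}. One small correction: you should not cite the smoothing effect \eqref{eq: smoothing-effect-general} for the $L^\infty$ bound on $u$, since that estimate is proved only for the \emph{constructed} solutions of Theorem~\ref{thm: teorema-esistenza}; the bound $u\in L^\infty(\mathbb{R}^d\times(\tau,\infty))$ you need is already part of Definition~\ref{eq: weak-sol-1} via \eqref{eq: weak-sol-spazi-u-1}, and in the final step the convergence in $\sigma(\mathcal{M},C_b)$ tested against the constant function $1$ gives $\|u(t_1)\|_{1,\rho}\to\mu(\mathbb{R}^d)$ directly, without the detour through $\xi_R$.
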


The proof of existence of weak solutions to \eqref{eq:
barenblatt-regular} is based on an approximation procedure, that is on picking a sequence of initial data in $ L^1_{\rho}(\mathbb{R}^d) \cap L^\infty(\mathbb{R}^d) $ which suitably converges to $ \mu $. An additional approximation will be needed to deal with the possible singularity of the weight at the origin. The corresponding approximate problems are addressed in the next subsection. Since the procedure is in principle standard although technically delicate, we underline the main points only.
\subsection{Approximate problems with initial data in $L^1_{\rho}(\mathbb{R}^d) \cap
L^\infty(\mathbb{R}^d) $} We are concerned with existence of
solutions to the following problem:
\begin{equation}\label{eq: barenblatt-u_0}
\begin{cases}
\rho(x) u_t + (-\Delta)^s\left( u^m \right) = 0  &  \ \textrm{in} \ \mathbb{R}^d \times \mathbb{R}^+ \, , \\
u = u_0  & \ \textrm{on} \ \mathbb{R}^d \times \{0\} \,.
\end{cases}
\end{equation}
 Such solutions are meant in the sense of
Definition \ref{eq: weak-sol-1} with $\mu$ replaced by
$\rho u_0$.

\begin{lem}\label{lem: prima-esistenza}
Let $ d > 2s $ and assume that $\rho$ satisfies \eqref{eq:
ass-rho} for some $\gamma \in [0,2s) \cap [0,d-2s]$ and $ \gamma_0
\in [0,\gamma] $. Let $ u_0 \in L^1_{\rho}(\mathbb{R}^d) \cap
L^\infty(\mathbb{R}^d) $, with $ u_0 \ge 0 $. Then there exists a
weak solution $u$ to \eqref{eq: barenblatt-u_0} which satisfies
the energy estimates \eqref{eq: prima-esistenza-energy-1}, \eqref{eq: prima-esistenza-energy-2}
with a constant $C^\prime$ depending only on $m$, $ t_1 $ and $ t_2 $.

\medskip

{\em Let us outline the strategy of the proof.
We further approximate the problem
\eqref{eq: barenblatt-u_0} by regularizing the weight
$\rho(x)$ in a neighbourhood of $x=0$ (where it can be singular). More precisely,
we introduce for any $\eta>0$ the following
problem:
\begin{equation}\label{eq: barenblatt_approx_2}
\begin{cases}
\rho_\eta(x) \left(u_{\eta}\right)_t + (-\Delta)^s\left( u_{\eta}^m \right) = 0  &  \ \textrm{in} \ \mathbb{R}^d \times \mathbb{R}^+ \, , \\
  u_{\eta} =  u_0 & \ \textrm{on} \ \mathbb{R}^d \times \{ 0 \} \, ,
\end{cases}
\end{equation}
where $\{ \rho_\eta \} \subset C(\mathbb{R}^d)$ is a family of
strictly positive weights which behave like $ |x|^{-\gamma} $ at
infinity and approximate $ \rho(x) $ monotonically from below, as
$\eta\to 0$. Existence (and uniqueness) of weak solutions to
\eqref{eq: barenblatt_approx_2} for such weights and initial data
have been established in \cite[Theorem 3.1]{PT2}. We get suitable
a priori estimates (namely \eqref{eq: prima-esistenza-energy-1} applied to $u_\eta$, which will be proved later, and \eqref{eq:
derivata-temporale-potenza-um} below), that enable us to pass to
the limit as $\eta\to 0$, and obtain a solution to problem
\eqref{eq: barenblatt-u_0}, by standard compactness arguments. }

\begin{proof} For any $\eta>0$ let $u_\eta$ be the unique solution to problem \eqref{eq: barenblatt_approx_2}.
Such solutions belong to $C([0,\infty);L^1_{\rho_\eta}(\mathbb{R}^d)) $ and satisfy the
bound $
\left\| u_\eta \right\|_{L^\infty(\mathbb{R}^d \times(0,\infty))}
\le \left\| u_0 \right\|_{L^\infty(\mathbb{R}^d)}.$
Exploiting these properties one can show that each $  u_\eta
$ satisfies a weak formulation which is slightly stronger than the
one of Definition \ref{eq: weak-sol-1}:
\begin{equation}\label{eq: formDebole-approx-1}
\begin{aligned}
&- \int_{0}^{T} \! \int_{\mathbb{R}^d}  u_{\eta}(x,t) \varphi_t(x,t) \, \rho_\eta(x) \mathrm{d}x \mathrm{d}t + \int_0^T \! \int_{\mathbb{R}^d} (-\Delta)^{\frac{s}{2}} (u_{\eta}^m)(x,t) \, (-\Delta)^{\frac{s}{2}} (\varphi)(x,t) \, \mathrm{d}x \mathrm{d}t \\
= & \int_{\mathbb{R}^d} u_0(x)  \varphi(x,0) \, \rho_\eta(x) \mathrm{d}x
\end{aligned}
\end{equation}
for all $T>0$ and $\varphi \in C^\infty_c( \mathbb{R}^d \times [0,T))$ (so that $\varphi(\cdot,T)=0$), where $ u_\eta^m \in
L^2((0,\infty);\dot{H}^s(\mathbb{R}^d))$. The latter property follows from the validity of the energy identity \eqref{eq: prima-esistenza-energy-1}
for $u_\eta$
for all $ t_2>t_1 \ge 0 $. Formally, \eqref{eq: prima-esistenza-energy-1} can be
proved by plugging the test function $ \varphi(x,t):=\vartheta(t)
u^m_\eta(x,t) $ into the weak formulation \eqref{eq:
formDebole-approx-1} and letting $ \vartheta $ tend to $
\chi_{[t_1,t_2]} $ as in the proof of Proposition \ref{oss:
cons-mass}. 
In order to justify rigorously the validity of \eqref{eq: prima-esistenza-energy-1} for $u_\eta$, one must
proceed as in \cite[Section 8]{DQRV}. A crucial point concerns the
fact that our solutions are \emph{strong}, which follows by
techniques analogous to the ones used in \cite[Section 8.1]{DQRV}.
We refer the reader to Section \ref{sect: strong} below for more
details.
We have:
\begin{equation}\label{eq: derivata-temporale-potenza}
\int_{t_1}^{t_2} \! \int_{\mathbb{R}^d} \left| (z_\eta)_t(x,t)
\right|^2  \rho_\eta(x) \mathrm{d}x \mathrm{d}t  \le C \int_{\mathbb{R}^d} u_\eta^{m+1}\left(x,{t_1}/2\right) \, \rho(x)\mathrm{d}x\ \ \
\forall t_2>t_1>0 \, ,
\end{equation}
where $ z_\eta := u_\eta^{\frac{m+1}{2}} $ and $C$ depends only on $m$, $ t_1 $ and $t_2 $. 
\noindent Formula \eqref{eq: derivata-temporale-potenza} follows as in \cite[Lemma 8.1]{DQRV}. Since
$$\left( u_{\eta}^{m} \right)_t = c_m\,
z_{\eta}^{\frac{m-1}{m+1}} \left(z_{\eta} \right)_t
\quad
\textrm{and}
\quad
\left\| z_\eta \right\|_{L^\infty(\mathbb{R}^d \times (0,\infty))}
= \left\| u_\eta \right\|_{L^\infty(\mathbb{R}^d \times
(0,\infty))}^{\frac{m+1}{2}} \le \left\| u_0
\right\|_{L^\infty(\mathbb{R}^d)} ^{\frac{m+1}{2}} \, , $$
from \eqref{eq: derivata-temporale-potenza} we deduce that
\begin{equation}\label{eq: derivata-temporale-potenza-um}
\int_{t_1}^{t_2} \! \int_{\mathbb{R}^d} \left|
\left(u_\eta^m\right)_t(x,t) \right|^2  \rho_\eta(x) \mathrm{d}x \mathrm{d}t  \le k \left\| u_0
\right\|_\infty^{{m-1}} \ \ \ \forall t_2>t_1 > 0
\end{equation}
for a suitable $ k>0 $ independent of $ \eta $. Moreover, the validity of
$ \int_{t_1}^{t_2} \! \int_{\mathbb{R}^d} \left| u_\eta^m(x,t) \right|^2  \rho_\eta(x) \mathrm{d}x \mathrm{d}t  \le C^{\prime\prime}$ for all $t_2 > t_1 \ge 0$ and
for another suitable positive constant $ C^{\prime\prime} $ that depends only on $m$, $ t_1 $, $ t_2 $ and $ u_0 $ is ensured by the
conservation of mass \eqref{eq: cons-mass} (with $ \rho = \rho_\eta $) and by the uniform bound on $\left\| u_\eta \right\|_{L^\infty(\mathbb{R}^d \times(0,\infty))}$. 
Let $n\in \mathbb{N}$. We now use \eqref{eq: lemma-Hs-stima-Hs} with $\xi_1 = \xi_{1, n}\in C^\infty(\mathbb R^d)$ such that
\[\xi_1 \equiv 1\quad \textrm{in}\;\, B_n \, , \;\;\; \xi_1 \equiv 0 \quad \textrm{in}\;\, B_{2n} \, ,\]
and with $\xi_2 = \xi_{2, n}\in C^\infty((0, \infty))$ such that
\[\xi_2 \equiv 1 \quad \textrm{in}\;\, \left(\frac 1 n, n\right) , \;\;\; \xi_2 \equiv 0\quad \textrm{in}\;\, \left(0, \frac 1{2n}\right)\cup (2n,\infty)\, . \]
The fact that $ H^s(\mathbb{R}^{d+1}) $ is compactly embedded in $ L^2_{\rm loc}(\mathbb{R}^{d+1}) $ (see e.g.~\cite[Theorem 7.1]{hiker}), and a standard
diagonal procedure allow us to pass to the limit as $\eta\to0$ in \eqref{eq: formDebole-approx-1} and get that the weak limit $u$ of $\{u_\eta\}$
satisfies
\begin{equation}\label{eq: formDebole-approx-1-limite}
\begin{aligned}
 & - \int_{0}^{T} \! \int_{\mathbb{R}^d}  u(x,t) \varphi_t(x,t) \, \rho(x) \mathrm{d}x \mathrm{d}t  + \int_0^T \! \int_{\mathbb{R}^d} (-\Delta)^{\frac{s}{2}} (u^m)(x,t) \, (-\Delta)^{\frac{s}{2}} (\varphi)(x,t) \,  \mathrm{d}x \mathrm{d}t & \\
 = & \int_{\mathbb{R}^d} u_0(x) \varphi(x,0) \, \rho(x) \mathrm{d}x
\end{aligned}
\end{equation}
for all $T>0$ and $\varphi \in C^\infty_c( \mathbb{R}^d \times [0,T) )$. The validity of \eqref{eq: weak-sol-initial}
follows by plugging into \eqref{eq: formDebole-approx-1-limite} the test function $ \varphi(x,t):=\vartheta(t) \xi_R(x)  $, where $ \xi_R $ is a
cut-off function as in Lemma \ref{lem:decay-lap-cutoff} and $ \vartheta $ is a regular approximation of $ \chi_{[0,t_2]} $. One then lets $ t_2 \to 0
$ and $ R \to \infty $.

The energy estimates \eqref{eq: prima-esistenza-energy-1} and
\eqref{eq: prima-esistenza-energy-2} for $u$ can be obtained
reasoning exactly as above (one uses again the fact
that solutions are strong).
\end{proof}
\end{lem}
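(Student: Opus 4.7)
The plan is to construct $u$ via a two-level approximation: first reduce to the setting of \cite{PT2} by removing the singularity of $\rho$ at the origin, then pass to the limit using compactness built from the energy estimates themselves. Concretely, I would pick a family $\{\rho_\eta\}_{\eta>0}\subset C(\mathbb{R}^d)$ of strictly positive weights obtained by truncating $\rho$ near the origin, say $\rho_\eta(x)=\rho(x)$ for $|x|\ge\eta$ and $\rho_\eta(x)=\rho(\eta\, x/|x|)$ otherwise, so that $\rho_\eta\uparrow\rho$ a.e.\ and $\rho_\eta$ keeps the prescribed $|x|^{-\gamma}$ decay at infinity. Then \cite[Theorem 3.1]{PT2} gives a unique weak solution $u_\eta\in C([0,\infty);L^1_{\rho_\eta}(\mathbb{R}^d))$ to \eqref{eq: barenblatt_approx_2} satisfying $0\le u_\eta\le\|u_0\|_\infty$ by the $L^\infty$ contraction of the nonlinear semigroup.

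Next I would derive three $\eta$-uniform a priori estimates. Conservation of mass for the approximate problem together with monotone convergence yields $\|u_\eta(t)\|_{1,\rho_\eta}=\int u_0\,\rho_\eta\,dx\le\|u_0\|_{1,\rho}$. Testing the equation with $\vartheta(t)u_\eta^m(x,t)$ for $\vartheta\to\chi_{[t_1,t_2]}$ and invoking \eqref{eq: id-parti-nonlocal-Hs-Hs} produces the energy identity \eqref{eq: prima-esistenza-energy-1} for $u_\eta$, which controls $u_\eta^m$ in $L^2((0,T);\dot H^s(\mathbb{R}^d))$ uniformly in $\eta$ thanks to the uniform bound on $\|u_\eta\|_\infty$ and mass. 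Finally, following the strategy of \cite[Lemma 8.1]{DQRV}, I would differentiate in time (after Steklov regularization) and obtain the pointwise-in-$t$ bound $\int_{t_1}^{t_2}\!\!\int |(z_\eta)_t|^2\rho_\eta\,dx\,dt\le C\int u_\eta^{m+1}(\tfrac{t_1}{2})\rho_\eta\,dx$ for $z_\eta=u_\eta^{(m+1)/2}$. Combining this with the $L^\infty$ bound converts it into an estimate on $(u_\eta^m)_t$ in $L^2_{\rho_\eta}$ uniform in $\eta$.

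To extract a limit I would localize: take a cut-off $\xi=\xi_{1,n}(x)\xi_{2,n}(t)$ and observe that $\xi u_\eta^m$ is bounded in $L^2((0,T);H^s(\mathbb{R}^d))$ (spatial regularity from the energy estimate) and its time derivative is bounded in $L^2$ (from the $z_\eta$ estimate). An Aubin--Lions/Fr\'echet--Kolmogorov argument then yields strong convergence of $u_\eta^m$, hence of $u_\eta$, in $L^2_{\mathrm{loc}}(\mathbb{R}^d\times(0,\infty))$ along a subsequence, enough to pass to the limit in the nonlinear term. One then upgrades by a diagonal procedure over $n$ and $T$ to obtain a limit $u$ satisfying the weak formulation \eqref{eq: formDebole-approx-1-limite}. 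The initial condition \eqref{eq: weak-sol-initial} for the resulting $u$ is recovered by plugging $\varphi(x,t)=\vartheta(t)\xi_R(x)$ into that weak formulation, letting first $t_2\to0$ and then $R\to\infty$, using that $u_0\in L^1_\rho$.

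The main obstacle I anticipate is the rigorous derivation of the energy identities \eqref{eq: prima-esistenza-energy-1}--\eqref{eq: prima-esistenza-energy-2} at the approximate level: the formal choice $\varphi=\vartheta\,u_\eta^m$ is not an admissible test function in $C^\infty_c$, so one must justify a chain rule for $\partial_t u_\eta^{m+1}$ despite the only minimal regularity available a priori. The remedy is to prove first that $u_\eta$ is a \emph{strong} solution, i.e.\ $(u_\eta)_t\in L^2_{\mathrm{loc}}((0,\infty);L^2_{\rho_\eta}(\mathbb{R}^d))$, by exploiting the analyticity of the semigroup generated by $\rho_\eta^{-1}(-\Delta)^s$ on $L^2_{\rho_\eta}$ (Theorem \ref{thm: self-adj} applied to $\rho_\eta$) together with a Be\'nilan--Crandall time-translation argument, as in \cite[Section 8.1]{DQRV}. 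Once strong solutions are in hand, \eqref{eq: prima-esistenza-energy-1} follows by the standard integration by parts, and \eqref{eq: prima-esistenza-energy-2} passes to the limit by Fatou's lemma and the uniformity of the bound on $(z_\eta)_t$, completing the proof.
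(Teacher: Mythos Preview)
Your proposal is correct and follows essentially the same route as the paper: regularize the weight near the origin to fall into the \cite{PT2} setting, derive the energy identity \eqref{eq: prima-esistenza-energy-1} and the $(z_\eta)_t$ estimate for the approximants via \cite[Section 8]{DQRV}, and pass to the limit by localized compactness plus a diagonal argument, recovering the initial datum by the cut-off test $\vartheta(t)\xi_R(x)$.

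Two minor remarks. First, for the compactness step the paper does not invoke Aubin--Lions directly; instead it appeals to Lemma~\ref{lem: Hs-loc}, which packages the three uniform bounds (on $u_\eta^m$, $(-\Delta)^{s/2}(u_\eta^m)$, and $(u_\eta^m)_t$ in weighted $L^2$) into a bound on $\xi_1\xi_2\,u_\eta^m$ in $H^s(\mathbb{R}^{d+1})$, treating space and time jointly, and then uses the compact embedding $H^s(\mathbb{R}^{d+1})\hookrightarrow L^2_{\rm loc}(\mathbb{R}^{d+1})$. Your Aubin--Lions variant achieves the same conclusion. Second, your appeal to the \emph{analyticity} of the linear semigroup generated by $\rho_\eta^{-1}(-\Delta)^s$ is not what actually yields strong solutions here---the equation is nonlinear, so linear analyticity does not directly give $(u_\eta)_t\in L^2$. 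The paper (see Section~\ref{sect: strong}) instead uses the $L^1$ contraction \eqref{eq: L1-cont} combined with the time-scaling of the porous medium equation to get the B\'enilan--Crandall bound \eqref{eq: stima-Radon-1}, then the $z_t$ estimate, and finally the B\'enilan--Gariepy result \cite{BG}. Since you also cite the B\'enilan--Crandall argument and \cite[Section 8.1]{DQRV}, this is only a cosmetic imprecision.
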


\subsection{Stroock-Varopoulos inequality and smoothing estimate}
Having at our disposal an existence result for problem \eqref{eq:
barenblatt-u_0}, we can now let $ \rho u_0 $ approximate $ \mu $.
In order to show that the corresponding solutions converge to a
solution of \eqref{eq: barenblatt-regular}, we need first some
technical results. We begin with a modification of the classical
Stroock-Varopoulos inequality: it is proved here for $v \in
L^\infty(\mathbb{R}^d) \cap \dot{H}^s(\mathbb{R}^d) $ with $
(-\Delta)^s(v) \in L^1(\mathbb{R}^d) $. Observe that, under the
hypothesis that $v \in L^q(\mathbb{R}^d) \cap
\dot{H}^s(\mathbb{R}^d) $ with $ (-\Delta)^s(v) \in
L^q(\mathbb{R}^d) $, for $q>1$, such an inequality can be found,
e.g., in \cite[Section 5]{DQRV} or \cite{BIK}. See also
\cite[formula (2.2.7)]{D} for a similar inequality involving
general Dirichlet forms. The present result seems to be new, in
view of its functional framework, therefore its proof is given in
some detail.
\begin{lem}\label{lem: stroock-var}
Let $ d > 2s $. For all nonnegative $v \in L^\infty(\mathbb{R}^d)
\cap \dot{H}^s(\mathbb{R}^d) $ such that $ (-\Delta)^s(v) \in
L^1(\mathbb{R}^d) $, the inequality
\begin{equation}\label{eq: strook-vareq}
\int_{\mathbb{R}^d} v^{q-1}(x) (-\Delta)^{s}(v)(x) \, \mathrm{d}x
\ge \frac{4(q-1)}{q^2} \int_{\mathbb{R}^d} \left|(-\Delta)^{\frac
s 2} \big( v^{\frac q 2} \big)(x) \right|^2 \mathrm{d}x
\end{equation}
holds for any $ q>1 $.
\end{lem}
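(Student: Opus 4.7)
The strategy is standard in spirit: reduce to a pointwise algebraic inequality, transfer it to the Gagliardo bilinear form via the integration-by-parts identity \eqref{eq: id-parti-nonlocal}, and finally extend by approximation to the weaker functional setting of the lemma. The key pointwise inequality is
\[
(a^{q-1}-b^{q-1})(a-b) \;\ge\; \frac{4(q-1)}{q^2}\bigl(a^{q/2}-b^{q/2}\bigr)^2 \qquad \forall\, a,b\ge0,\ q>1,
\]
which I obtain (WLOG $a\ge b$) by writing $a^{q/2}-b^{q/2}=\tfrac{q}{2}\int_b^a t^{q/2-1}\,dt$ and applying Cauchy--Schwarz:
$(a^{q/2}-b^{q/2})^2\le\tfrac{q^2}{4}(a-b)\int_b^a t^{q-2}\,dt=\tfrac{q^2}{4(q-1)}(a-b)(a^{q-1}-b^{q-1})$.

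\textbf{Smooth, compactly supported case.} For nonnegative $v\in\mathcal D(\mathbb R^d)$ I would like to plug $v^{q-1}$ into \eqref{eq: id-parti-nonlocal}, but $t\mapsto t^{q-1}$ is not smooth at $0$ when $q\in(1,2)$. So I use $\phi_\epsilon:=(v+\epsilon)^{q-1}-\epsilon^{q-1}\in\mathcal D(\mathbb R^d)$ as a test function, obtaining
\[
\int_{\mathbb R^d}\phi_\epsilon\,(-\Delta)^s v\,dx=\frac{C_{d,s}}{2}\!\int\!\!\int\frac{(v(x)-v(y))\bigl((v+\epsilon)^{q-1}(x)-(v+\epsilon)^{q-1}(y)\bigr)}{|x-y|^{d+2s}}\,dxdy.
\]
Applying the pointwise inequality with $a=v(x)+\epsilon$, $b=v(y)+\epsilon$ (noting $(a+\epsilon)-(b+\epsilon)$ is replaced by $v(x)-v(y)$ only up to the identity $a-b$) and letting $\epsilon\to0$ via dominated and monotone convergence yields \eqref{eq: strook-vareq} in this smooth setting, with the RHS recognized as $\tfrac{4(q-1)}{q^2}\|(-\Delta)^{s/2}(v^{q/2})\|_{L^2}^2$ through \eqref{eq: id-parti-nonlocal-Hs-Hs}.

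\textbf{Extension to the class in the lemma.} Given $v$ as in the statement, I would approximate it by a sequence $\{v_n\}\subset\mathcal D(\mathbb R^d)$ of nonnegative functions with $\|v_n\|_\infty\le\|v\|_\infty$, $v_n\to v$ in $\dot H^s(\mathbb R^d)$ and pointwise a.e., and $(-\Delta)^s v_n\to(-\Delta)^s v$ in $L^1(\mathbb R^d)$. Such a sequence can be produced by mollification followed by a smooth cutoff, exploiting that $\mathcal D(\mathbb R^d)$ is dense in $\dot H^s(\mathbb R^d)$ by definition and that convolution commutes with $(-\Delta)^s$. The smooth-case inequality applied to each $v_n$ gives
\[
\int_{\mathbb R^d}v_n^{q-1}\,(-\Delta)^sv_n\,dx\;\ge\;\tfrac{4(q-1)}{q^2}\,\|(-\Delta)^{s/2}(v_n^{q/2})\|_{L^2}^2.
\]
The LHS passes to the limit because $v_n^{q-1}\to v^{q-1}$ pointwise a.e.\ with the uniform bound $\|v_n^{q-1}\|_\infty\le\|v\|_\infty^{q-1}$, combined with $L^1$-convergence of $(-\Delta)^sv_n$. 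For the RHS, I invoke Fatou on the Gagliardo representation
$\|(-\Delta)^{s/2}(w)\|_{L^2}^2=\tfrac{C_{d,s}}{2}\!\int\!\!\int\tfrac{(w(x)-w(y))^2}{|x-y|^{d+2s}}\,dxdy$, which at once shows $v^{q/2}\in\dot H^s(\mathbb R^d)$ and yields the desired lower semicontinuity.

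\textbf{Main obstacle.} The delicate point is the passage to the limit in Step 3, specifically producing an approximation that simultaneously controls the $\dot H^s$-norm of $v_n$, the $L^\infty$-norm (to handle $v_n^{q-1}$), and the $L^1$-norm of $(-\Delta)^s v_n$. The a priori unnatural coupling between the information $v\in\dot H^s$ and $(-\Delta)^s v\in L^1$ — weaker than the $L^q$-based frameworks in \cite{DQRV,BIK} — is precisely what makes the present statement new and forces the use of Fatou (rather than a direct norm convergence) on the right-hand side, avoiding any need to verify chain-rule regularity of $v\mapsto v^{q/2}$ in $\dot H^s$ for $q\in(1,2)$ (where $q/2<1$ breaks the naive Lipschitz composition argument).
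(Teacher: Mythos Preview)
Your Step~2 (the smooth compactly supported case via the pointwise inequality and the $\epsilon$-shift) is fine, and the passage to the limit on the right-hand side via Fatou is correct. The genuine gap is in Step~3: you assert that ``mollification followed by a smooth cutoff'' produces $v_n\in\mathcal D(\mathbb R^d)$ with $(-\Delta)^s v_n\to(-\Delta)^s v$ in $L^1(\mathbb R^d)$, but this is exactly where the argument breaks. Mollification alone does give the $L^1$ convergence of $(-\Delta)^s$ (since convolution commutes with it), but the cutoff does not. Writing $(-\Delta)^s(\xi_R w)=\xi_R(-\Delta)^s w+w(-\Delta)^s\xi_R-C_{d,s}\int\tfrac{(\xi_R(x)-\xi_R(y))(w(x)-w(y))}{|x-y|^{d+2s}}\,dy$, the first piece is harmless, but for the second you only know $w\in L^{2d/(d-2s)}\cap L^\infty$ (Sobolev embedding of $\dot H^s$), \emph{not} $w\in L^1$; a scaling computation with $(-\Delta)^s\xi_R(x)=R^{-2s}(-\Delta)^s\xi(x/R)$ and H\"older at the critical exponent gives $\|w\,(-\Delta)^s\xi_R\|_{L^1}\lesssim R^{(d-2s)/2}$, which blows up. The cross term has the same obstruction. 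So the approximation you need in Step~3 is not delivered by the procedure you propose, and you have correctly identified this as the ``main obstacle'' without actually resolving it.

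The paper's proof avoids approximating by $\mathcal D(\mathbb R^d)$ altogether. It mollifies once (obtaining smooth but \emph{not} compactly supported $v$ with all three properties preserved) and then, for such $v$, applies the known Stroock--Varopoulos inequality from \cite[Lemma~5.2]{DQRV} to $\xi_R v$ with carefully designed approximants $\psi_n,\Psi_n$ of $t\mapsto t^{q-1}$ and $t\mapsto 2(q-1)^{1/2}t^{q/2}/q$ that behave like $t^{(d+2s)/(d-2s)}$ and $t^{d/(d-2s)}$ near $0$. The point of this choice is that $\psi_n(v)v$ and $[\psi_n(v)]^2$ become controlled by $v^{2d/(d-2s)}\in L^1$ on $\{v\le 1/n\}$ (and are trivially handled on the finite-measure set $\{v>1/n\}$), so the cutoff error terms in \eqref{eq: lapl-prod-3}--\eqref{eq: lapl-prod-4} are bounded by $\|(-\Delta)^s\xi_R\|_\infty$ and $\|l_s(\xi_R)\|_\infty$ times finite quantities, and these \emph{do} vanish as $R\to\infty$. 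Only after $R\to\infty$ does one let $n\to\infty$ to recover the pure powers. This two-parameter ($R$, then $n$) cutoff, tailored to the sole available integrability $v\in L^{2d/(d-2s)}$, is the missing idea in your argument.
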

\begin{proof} We shall assume, with no loss of generality, that $ v $ is a regular function. Indeed, by standard mollification arguments, one can always pick a sequence $ \{ v_n \} \subset C^\infty(\mathbb{R}^d)\cap L^\infty(\mathbb{R}^d) \cap \dot{H}^s(\mathbb{R}^d) $ such that $ \{ v_n \} $ converges pointwise to $ v $, $ \| v_n \|_\infty \le \| v \|_\infty $ and $\{ (-\Delta)^s(v_n) \} $ converges to $ (-\Delta)^s(v) $ in $ L^1(\mathbb{R}^d) $. This is enough to pass to the limit as $ n \to \infty $ on the l.h.s.\ of \eqref{eq: strook-vareq}, while on the r.h.s.\ one exploits the weak lower semi-continuity of the $L^2 $ norm.

Consider the following sequences of functions:
\begin{gather*}
\psi_n(x) := \int_0^{x \wedge \frac1n} y^{\frac{4s}{d-2s}} \, \mathrm{d}y + (q-1) \int_{\frac1n}^{x \vee \frac1n } y^{q-2} \, \mathrm{d}y \ \ \ \forall x \in \mathbb{R}^{+}  \, , \\
\Psi_n(x) := \int_0^{x \wedge \frac1n} y^{\frac{2s}{d-2s}} \, \mathrm{d}y + (q-1)^{\frac12} \int_{\frac1n}^{x \vee \frac1n } y^{\frac{q}{2}-1} \, \mathrm{d}y \ \ \ \forall x \in \mathbb{R}^{+} \, . 
\end{gather*}
It is plain that $ \psi_n$ and $ \Psi_n $ are absolutely continuous, monotone increasing functions such that
$\psi_n^\prime(x) = \left[ \Psi_n^\prime(x) \right]^2$ for all $x \in \mathbb{R}^{+}$. For any $ R>0 $, take a cut-off function $ \xi_R $ as in Lemma \ref{lem:decay-lap-cutoff}. To the function $ \xi_R v $ one can apply Lemma 5.2 of \cite{DQRV} with the choices $\psi= \psi_n $ and $\Psi=\Psi_n $, which yields
\begin{equation}\label{eq: strook-var-approx}
\int_{\mathbb{R}^d} \psi_n(\xi_R v)(x) \, (-\Delta)^{s}(\xi_R v)(x) \, \mathrm{d}x  \ge \int_{\mathbb{R}^d} \left|(-\Delta)^{\frac s
2}( \Psi_n(\xi_R v) )(x) \right|^2 \mathrm{d}x \, .
\end{equation}
Expanding the $s$-Laplacian of the product of two functions, we get that the l.h.s.\ of \eqref{eq: strook-var-approx} equals
\begin{equation}\label{eq: lapl-prod-1}
\begin{aligned}
& \int_{\mathbb{R}^d} \psi_n(\xi_R v)(x) \, \xi_R(x) (-\Delta)^{s}(v)(x) \, \mathrm{d}x + \int_{\mathbb{R}^d} \psi_n(\xi_R v)(x) (-\Delta)^{s}(\xi_R)(x) v(x) \, \mathrm{d}x  \\
 & + 2 \, C_{d,s} \int_{\mathbb{R}^d} \psi_n(\xi_R v)(x) \int_{\mathbb{R}^d} \frac{(\xi_R(x)-\xi_R(y))(v(x)-v(y))}{|x-y|^{d+2s}} \, \mathrm{d}y  \mathrm{d}x \, .
\end{aligned}
\end{equation}
By dominated convergence,
\begin{equation*}
\lim_{R \to \infty} \int_{\mathbb{R}^d} \psi_n(\xi_R v)(x) \, \xi_R(x) (-\Delta)^{s}(v)(x) \, \mathrm{d}x = \int_{\mathbb{R}^d} \psi_n(v)(x) (-\Delta)^{s}(v)(x) \, \mathrm{d}x \, .
\end{equation*}
Our aim is to show that the other two integrals in \eqref{eq: lapl-prod-1} go to zero as $R\to \infty $. We have:
\begin{equation}\label{eq: lapl-prod-3}
\begin{aligned}
& \left| \int_{\mathbb{R}^d} \psi_n(\xi_R v)(x) (-\Delta)^{s}(\xi_R)(x) v(x) \, \mathrm{d}x \right|  \\
\le &  \left\| (-\Delta)^{s}(\xi_R) \right\|_\infty \left( \frac{d-2s}{d+2s} \int_{\{ v \le \frac1n \}} v^{\frac{2d}{d-2s}}(x)\, \mathrm{d}x + \psi_n(\| v \|_\infty)\| v \|_\infty \int_{\{ v > \frac1n \}} \mathrm{d}x \right)
\end{aligned}
\end{equation}
and
\begin{equation}\label{eq: lapl-prod-4}
\begin{aligned}
& \left| \int_{\mathbb{R}^d} \psi_n(\xi_R v)(x) \int_{\mathbb{R}^d} \frac{(\xi_R(x)-\xi_R(y))(v(x)-v(y))}{|x-y|^{d+2s}} \, \mathrm{d}y \mathrm{d}x \right| \\
\le & \left\| v \right\|_{\dot{H}^s} \left( \int_{\mathbb{R}^d} \left[ \psi_n(\xi_R v)(x) \right]^2 \int_{\mathbb{R}^d} \frac{(\xi_R(x)-\xi_R(y))^2}{|x-y|^{d+2s}} \, \mathrm{d}y \mathrm{d}x \right)^\frac12  \\
\le & \left\| v \right\|_{\dot{H}^s} \left\|  l_s(\xi_R) \right\|_\infty^{\frac12} \left( \left[\frac{d-2s}{d+2s}\right]^2 \int_{\{ v \le \frac1n \}} v^{2\frac{d+2s}{d-2s}}(x)\, \mathrm{d}x + \left[\psi_n(\| v \|_\infty)\right]^2 \int_{\{ v > \frac1n \}} \mathrm{d}x \right)^{\frac12} ,
\end{aligned}
\end{equation}
where $l_s $ is defined in Lemma \ref{lem:decay-lap-2}. Thanks to the scaling properties of both $(-\Delta)^s(\xi_R) $ and $l_s(\xi_R) $ (Lemma \ref{lem:decay-lap-cutoff}), it is immediate to check that $ \lim_{R\to \infty} \| (-\Delta)^s(\xi_R) \|_\infty = \lim_{R\to \infty} \| l_s(\xi_R) \|_\infty =0 $. Moreover, notice that $ v \in L^{\frac{2d}{d-2s}}(\mathbb{R}^d) \cap L^\infty(\mathbb{R}^d) $ (see \cite[Section 6]{hiker} or Lemma \ref{lem: CKN} below). In particular, $v$ also belongs to $L^{2\frac{d+2s}{d-2s}}(\mathbb{R}^d)$. Thus, letting $ R \to \infty $ in \eqref{eq: lapl-prod-3} and \eqref{eq: lapl-prod-4}, we deduce that the last two integrals in \eqref{eq: lapl-prod-1} vanish, so that we can pass to the limit on the l.h.s.\ of \eqref{eq: strook-var-approx}. On the r.h.s.\ we just use the fact that $ (-\Delta)^{\frac{s}{2}}(\Psi_n(\xi_R v)) $ converges to $ (-\Delta)^{\frac{s}{2}}(\Psi_n(v)) $ weakly in $ L^2(\mathbb{R}^d) $. This proves the validity of
\begin{equation}\label{eq: strook-var-approx-bis}
\int_{\mathbb{R}^d} \psi_n(v)(x) (-\Delta)^{s}(v)(x) \, \mathrm{d}x  \ge \int_{\mathbb{R}^d} \left|(-\Delta)^{\frac s 2}( \Psi_n(v) )(x) \right|^2 \mathrm{d}x \, .
\end{equation}
The final step is to let $ n \to \infty $ in \eqref{eq: strook-var-approx-bis}. It is clear that the sequence $\{ \psi_n(x) \}$ converges locally uniformly to the function $ x^{q-1} $, while $ \{ \Psi_n(x) \} $ converges locally uniformly to $ 2(q-1)^{\frac12} x^{\frac{q}{2}}/{q} $. Hence, $\{ \psi_n(v) \} $ and $\{ \Psi_n(v) \} $ converge in $ L^\infty(\mathbb{R}^d) $ to $ v^{q-1} $ and $ 2(q-1)^{\frac12} v^{\frac{q}{2}}/{q} $, respectively. This is enough in order to pass to the limit in \eqref{eq: strook-var-approx-bis} and obtain \eqref{eq: strook-vareq}.
\end{proof}

\begin{lem}\label{lem: CKN}
Let $ d > 2s $ and assume that $\rho$ satisfies \eqref{eq:
ass-rho} for some $\gamma \in [0,2s) \cap [0,d-2s]$ and $ \gamma_0
\in [0,\gamma] $. There exists a positive constant
$C_{CKN}=C_{CKN}(C,\gamma,s,d)$ such that the
Caffarelli-Kohn-Nirenberg-type inequalities
\begin{equation*}\label{CKN-peso}
\left\| v  \right\|_{q,\rho} \leq  C_{CKN} \left\|
(-\Delta)^{\frac s 2} (v) \right\|_{2}^{\frac{1}{\alpha+1}}
\left\| v \right \|_{p,\rho}^{\frac{\alpha}{\alpha+1}} \ \ \
\forall v \in L^p_{\rho}(\mathbb{R}^d) \cap
\dot{H}^s(\mathbb{R}^d)
\end{equation*}
hold for any $\alpha \ge 0$, $ p \ge 1 $ and $q=2 (d-\gamma)(\alpha+1)/\big[(d-\gamma)\frac{\alpha}p+ d-2s\big]$.
\begin{proof} See e.g.~\cite[Theorem 1.8]{DAL}, where one considers the Sobolev inequality corresponding to $\alpha=0$ here, and then uses an elementary interpolation.
\end{proof}
\end{lem}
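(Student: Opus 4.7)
The plan is to reduce the claim to the case $\alpha=0$, which is a weighted fractional Sobolev inequality of Stein--Weiss/Caffarelli--Kohn--Nirenberg type, and then derive the general case by an elementary H\"older interpolation. For $\alpha=0$ the target inequality reads
\[
\|v\|_{q^*,\rho}\le C\,\|(-\Delta)^{s/2}(v)\|_{2}, \qquad q^*:=\frac{2(d-\gamma)}{d-2s}.
\]
The key observation is that the hypothesis $\gamma_0\le\gamma$ in \eqref{eq: ass-rho} yields the \emph{global} upper bound $\rho(x)\le C\,|x|^{-\gamma}$ on $\mathbb{R}^d$: for $|x|\ge 1$ this is immediate, and for $|x|<1$ it follows from $|x|^{-\gamma_0}\le|x|^{-\gamma}$. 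Hence it suffices to establish the inequality for the homogeneous weight $|x|^{-\gamma}$; the resulting Stein--Weiss-type estimate is classical, and the range $\gamma\in[0,2s)\cap[0,d-2s]$ imposed in the statement is exactly what guarantees its validity (e.g.~\cite{DAL}). In particular, $\gamma<2s$ makes $q^*>2$ finite.

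With the Sobolev bound in hand, I would apply the standard H\"older interpolation in the measure space $(\mathbb{R}^d,\rho\,\mathrm{d}x)$. Defining $\theta\in[0,1]$ by $\tfrac{1}{q}=\tfrac{\theta}{q^*}+\tfrac{1-\theta}{p}$, one has
\[
\|v\|_{q,\rho}\le\|v\|_{q^*,\rho}^{\theta}\,\|v\|_{p,\rho}^{1-\theta}.
\]
Choosing $\theta=\tfrac{1}{\alpha+1}$, so that $1-\theta=\tfrac{\alpha}{\alpha+1}$, matches the exponents prescribed in the statement. Solving the identity $\tfrac{\alpha+1}{q}=\tfrac{1}{q^*}+\tfrac{\alpha}{p}$ for $q$, with $q^*=2(d-\gamma)/(d-2s)$, yields the stated relation between $q$ and $(\alpha,p,s,d,\gamma)$. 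Plugging the Sobolev estimate into the H\"older bound then produces the desired inequality, with $C_{CKN}$ depending only on the constant $C$ in \eqref{eq: ass-rho} together with $\gamma$, $s$, $d$.

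The only nontrivial ingredient is the weighted Sobolev inequality for the homogeneous weight $|x|^{-\gamma}$, but no genuine obstacle should arise: the case $\gamma=0$ reduces to the usual fractional Sobolev embedding $\dot H^s(\mathbb{R}^d)\hookrightarrow L^{2d/(d-2s)}(\mathbb{R}^d)$, and the general case can be obtained from the Hardy--Littlewood--Sobolev inequality applied to the Riesz representation $v=I_s\ast[(-\Delta)^{s/2}v]$ combined with a weighted Hardy-type estimate for $|x|^{-\gamma}$, which is exactly the content of the reference cited in the statement. The rest is simply careful bookkeeping of the H\"older exponents.
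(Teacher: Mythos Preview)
Your proposal is correct and follows exactly the strategy indicated in the paper's own (terse) proof: establish the weighted Sobolev inequality for $\alpha=0$ by passing to the homogeneous weight $|x|^{-\gamma}$ via the global bound $\rho(x)\le C|x|^{-\gamma}$ (a consequence of $\gamma_0\le\gamma$) and invoking the Stein--Weiss-type estimate from \cite{DAL}, then recover the general $\alpha\ge0$ case by H\"older interpolation in $L^p_\rho$ with $\theta=1/(\alpha+1)$. You have simply fleshed out the details the paper leaves implicit.
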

Lemmas \ref{lem: stroock-var} and \ref{lem: CKN} provide us with
some functional inequalities which are crucial to prove the
following {\it smoothing effect} for solutions to \eqref{eq:
barenblatt-u_0}.
\begin{pro}\label{lem: smoothing-effect}
Let $ d > 2s $ and assume that $\rho$ satisfies \eqref{eq:
ass-rho} for some $\gamma \in [0,2s) \cap [0,d-2s]$ and $ \gamma_0
\in [0,\gamma] $. There exists a constant $ K>0 $ depending only
on $ m $, $\gamma$, $s$, $ d $ and $C$ such that, for all nonnegative
initial datum $u_0 \in L^1_{\rho}(\mathbb{R}^d) \cap
L^\infty(\mathbb{R}^d) $ and the corresponding weak solution $u$
to \eqref{eq: barenblatt-u_0} constructed in Lemma \ref{lem:
prima-esistenza}, the following $L^{p_0}_{\rho}$--$L^\infty$
smoothing effect holds for any $p_0 \in [1,\infty) $:
\begin{equation}\label{eq: smoothing-effect}
 \left\| u(t) \right\|_\infty \le K \, t^{- \alpha_0 } \, \left\| u_0 \right\|_{p_0,\rho}^{\beta_0}   \ \ \ \forall t > 0 \, ,
\end{equation}
where
\begin{equation}\label{eq: smoothing-effect-exp}
\alpha_0 := \frac{d-\gamma}{(m-1)(d-\gamma) + (2s-\gamma)p_0 } \, , \
\ \ \beta_0 := \frac{(2s-\gamma)p_0}{(m-1)(d-\gamma) + (2s-\gamma)p_0
}  \, .
\end{equation}
\end{pro}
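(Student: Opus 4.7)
The plan is to combine the Stroock-Varopoulos inequality (Lemma~\ref{lem: stroock-var}) with the Caffarelli-Kohn-Nirenberg inequality (Lemma~\ref{lem: CKN}) and the contractivity of $\|u(t)\|_{p_0,\rho}$ in order to derive an autonomous differential inequality of Nash type for $\phi_p(t):=\|u(t)\|_{p,\rho}^p$, which I can then integrate to obtain an $L^{p_0}_\rho$–$L^p_\rho$ smoothing estimate and, finally, iterate à la Moser as $p\to\infty$. All the manipulations should first be performed on the approximate problem \eqref{eq: barenblatt_approx_2}, where $\rho_\eta$ is regular and strictly positive and the solutions $u_\eta$ are strong and bounded, thus legitimating the test-function arguments; the conclusion for $u$ then follows by passing to the limit $\eta\to 0$ as in Lemma~\ref{lem: prima-esistenza}, keeping track that all constants are uniform in $\eta$ thanks to \eqref{eq: ass-rho}.

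More precisely, for $p\ge p_0$ I would first test the approximate equation against $u_\eta^{p-1}$ (with the spatial cut-off $\xi_R$ that is then removed via Lemmas~\ref{lem:decay-lap-1} and~\ref{lem:decay-lap-cutoff}, exactly as in Proposition~\ref{oss: cons-mass}) to obtain
\[
\frac{1}{p}\frac{d}{dt}\int_{\mathbb{R}^d} u_\eta^p\,\rho_\eta\,\mathrm{d}x \;=\; -\int_{\mathbb{R}^d} u_\eta^{p-1}\,(-\Delta)^s(u_\eta^m)\,\mathrm{d}x.
\]
Setting $v=u_\eta^m$ and $q=(p+m-1)/m$ in Lemma~\ref{lem: stroock-var} upper-bounds the right-hand side by $-c(p,m)\,\bigl\|(-\Delta)^{s/2}\bigl(u_\eta^{(p+m-1)/2}\bigr)\bigr\|_2^{2}$; the special case $p=p_0$ already yields the contraction $\|u_\eta(t)\|_{p_0,\rho_\eta}\le\|u_0\|_{p_0,\rho_\eta}$. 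Next, I would set $w:=u_\eta^{(p+m-1)/2}$ and apply the weighted CKN inequality of Lemma~\ref{lem: CKN} to $w$, choosing the interpolation exponent $\bar p$ so that $\|w\|_{\bar p,\rho}^{\bar p}=\|u_\eta\|_{p_0,\rho}^{p_0}$ (i.e.\ $\bar p=2p_0/(p+m-1)$) and the target exponent $q$ so that $\|w\|_{q,\rho}^{q}=\|u_\eta\|_{p,\rho}^{p}$ (i.e.\ $q=2p/(p+m-1)$); the corresponding $\alpha\ge 0$ is then forced by the scaling identity in Lemma~\ref{lem: CKN}. Combining the two displays gives a Bernoulli-type ODE
\[
\phi_p'(t)\;\le\;-C_p\,\phi_p(t)^{1+\theta(p)}\,\|u_0\|_{p_0,\rho}^{-\lambda(p)}
\]
with explicit positive exponents $\theta(p),\lambda(p)$ depending only on $p,m,s,d,\gamma,p_0$. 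Integrating on $[0,t]$ (discarding the datum term, as is standard whenever $\theta(p)>0$) yields an $L^{p_0}_\rho$–$L^p_\rho$ smoothing estimate of the form
\[
\|u_\eta(t)\|_{p,\rho}\;\le\;K_p\,t^{-a_p}\,\|u_0\|_{p_0,\rho}^{b_p}
\]
for each $p\ge p_0$, with $a_p,b_p$ dictated by the scaling of the equation.

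The last step is to pass to $p=\infty$. I would do this via a standard Moser iteration along the geometric sequence $p_k=p_0\cdot 2^k$ on the time intervals $\bigl[t(1-2^{-k}),t(1-2^{-k-1})\bigr]$, summing the resulting telescoping estimates; the partial exponents form convergent geometric series, and their limits must coincide with the claimed $\alpha_0,\beta_0$ of \eqref{eq: smoothing-effect-exp} since these are the unique exponents compatible with dimensional analysis and the scale invariance of the homogeneous equation with weight $|x|^{-\gamma}$. Finally, sending $\eta\to 0$ as in Lemma~\ref{lem: prima-esistenza}, using the weak lower semicontinuity of $L^p$-norms and the uniformity of all constants in $\eta$ guaranteed by \eqref{eq: ass-rho}, gives \eqref{eq: smoothing-effect}. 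The main obstacle will be the bookkeeping of the exponents in the CKN step: one has to check that for every large $p$ there exists an admissible $\alpha\ge 0$ in Lemma~\ref{lem: CKN} for the prescribed choice of $\bar p$ and $q$, and that the resulting Moser constants $K_{p_k}$ stay bounded as $k\to\infty$; it is precisely here that the hypothesis $\gamma\in[0,2s)\cap[0,d-2s]$ (and $\gamma_0\in[0,\gamma]$ entering through the weight comparison) is used.
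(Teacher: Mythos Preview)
Your proposal is correct and follows essentially the same route as the paper: the proof there is omitted, but it explicitly lists the same three ingredients you use---the Stroock--Varopoulos inequality of Lemma~\ref{lem: stroock-var}, the CKN inequality of Lemma~\ref{lem: CKN}, and the decrease of the $L^p_\rho$ norms (Section~\ref{sect: strong})---feeding into a standard parabolic Moser iteration as in \cite[Theorem~8.2]{DQRV}. Your choice to run the argument at the level of $u_\eta$ and pass to the limit at the end is perfectly compatible with the paper's construction, and your closing caveat about the admissibility of the CKN exponents and the bookkeeping of the Moser constants is exactly where the work lies; just be aware that for large $p$ your choice $\bar p=2p_0/(p+m-1)$ drops below $1$, so the ``one-shot'' $L^{p_0}_\rho$--$L^p_\rho$ step must itself be replaced by the step-by-step iteration $p_k\to p_{k+1}$ you describe afterwards.
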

\begin{proof} We omit the details, since the claim follows as in \cite[Theorem 8.2]{DQRV} by means
of a standard parabolic Moser iteration. Nevertheless, notice that the proof relies on the Stroock-Varopoulos inequality (which has to hold for the precise set of functions stated in Lemma \ref{lem: stroock-var}), the Caffarelli-Kohn-Nirenberg type inequalities provided by Lemma \ref{lem: CKN} and the fact that the $L^p_{\rho}$ norms do not increase along the evolution (see Section \ref{sect: strong}).
\end{proof}

\subsection{Proof of the existence result}
We outline the main steps of this proof. Suppose first that $ \mu $ is compactly supported. 
Consider the family $\{u_\varepsilon \}$ of weak solutions to \eqref{eq: barenblatt-regular} that take
on the regular initial data $\mu_\varepsilon := \psi_\varepsilon
\ast \mu$ (let $\varepsilon>0$),
where $ \psi_\varepsilon := \frac{1}{\varepsilon^d}
\psi\left(\frac{x}{\varepsilon} \right)$
with $ \psi \in \mathcal{D}_{+}(\mathbb{R}^d) $ and $ \| \psi \|_1=1 $. The existence of such family is ensured by Lemma \ref{lem: prima-esistenza}, upon setting $ u_0=\rho^{-1} \mu_\varepsilon $.
In view of certain a priori estimates (see \eqref{eq:
smooth+interp}, \eqref{eq: prima-esistenza-energy-1-epsilon} and
\eqref{eq: prima-esistenza-energy-2-epsilon} below), we prove that
$ \{ u_\varepsilon \} $ converges (up to subsequences), as $
\varepsilon\to0 $, to a function $ u $ which satisfies \eqref{eq:
weak-sol-spazi-u-1}, \eqref{eq: weak-sol-spazi-u-2} and \eqref{eq:
weak-sol-eq-debole}. Afterwards we deal with \eqref{eq:
weak-sol-initial}. To do this, we exploit some results in
potential theory, following \cite{Pierre} or \cite{VazConstr},
using the Riesz potential $ U_\varepsilon(\cdot,t)$ of $
\rho(\cdot) u_\varepsilon(\cdot,t)$. Then we let $\varepsilon\to
0$; in doing this, a uniform estimate w.r.t. $\varepsilon$ for the
potentials (see \eqref{eq: u-tempo-4-c} below) will be crucial.
Finally, we consider general positive finite Radon measures $\mu$, by a further
approximation.

\begin{proof}[Proof of Theorem \ref{thm: teorema-esistenza}]

For any $\varepsilon>0$, let $u_\varepsilon$ be as above.
Combining the smoothing effect \eqref{eq: smoothing-effect} with the fact that $ \left\| \mu_\varepsilon \right\|_1 = \mu(\mathbb{R}^d)$ and with the conservation of mass \eqref{eq: cons-mass}, we obtain:
\begin{equation}\label{eq: smooth+interp}
\begin{aligned}
\int_{\mathbb{R}^d} u_\varepsilon^{m+1}(x,t) \, \rho(x) \mathrm{d}x   \le  \left\| u_\varepsilon(t) \right\|_\infty^{m} \, \left\| \mu_\varepsilon \right\|_1
\le   K^m \, t^{- \alpha m } \, \mu(\mathbb{R}^d)^{1 + \beta m}
\end{aligned}
\end{equation}
for all $t>0$. Hence, using \eqref{eq: prima-esistenza-energy-1},
\eqref{eq: prima-esistenza-energy-2} and \eqref{eq: smooth+interp}
we get:
\begin{gather}
\int_{t_1}^{t_2} \! \int_{\mathbb{R}^d} \left| (-\Delta)^{\frac{s}{2}} \left( u_\varepsilon^m \right) (x,t) \right|^2
\mathrm{d}x \mathrm{d}t + \int_{\mathbb{R}^d} u_\varepsilon^{m+1}(x,t_2) \, \rho(x) \mathrm{d}x  \le K^m \, t_1^{- \alpha m } \, \mu(\mathbb{R}^d)^{1 + \beta m} \, ,
\label{eq: prima-esistenza-energy-1-epsilon}  \\
\int_{t_1}^{t_2} \! \int_{\mathbb{R}^d} \left|
\left(z_\varepsilon\right)_t(x,t)  \right|^2 \rho(x)
\mathrm{d}x \mathrm{d}t  \le C^\prime \int_{\mathbb{R}^d}u^{m+1}\left(x,{t_1}/2\right)\, \rho(x) \mathrm{d}x\label{eq: prima-esistenza-energy-2-epsilon}
\end{gather}
for all $t_2 > t_1 > 0 $, where
$ z_\varepsilon := u_\varepsilon^{\frac{m+1}{2}} $ and $C^\prime$ is a
positive constant that depends on $m$, $ t_1 $, $ t_2 $ but is independent of $ \varepsilon $. Thanks
to \eqref{eq: prima-esistenza-energy-1-epsilon}, \eqref{eq:
prima-esistenza-energy-2-epsilon}, the conservation of mass and the smoothing effect (which,
in particular, bounds $ \{u_\varepsilon\} $ in $
L^\infty(\mathbb{R}^d \times (\tau,\infty) )$ for all $ \tau>0 $
independently of $ \varepsilon$), we are allowed to proceed
exactly as in the proof of Lemma \ref{lem: prima-esistenza}. That
is, we obtain that the pointwise limit $ u $ of $ \{ u_\varepsilon
\} $, up to subsequences, satisfies \eqref{eq: weak-sol-spazi-u-1}, \eqref{eq:
weak-sol-spazi-u-2} and \eqref{eq: weak-sol-eq-debole}.

 Let us now introduce the Riesz potential $
U_\varepsilon(\cdot,t)$ of $ \rho(\cdot) u_\varepsilon(\cdot,t)$.
The equation solved by $ u_\varepsilon $ is
\begin{equation}\label{eq: eq-formale-epsilon}
\rho(x)(u_\varepsilon)_t(x,t) =
-\left(-\Delta\right)^{s}(u^m_\varepsilon)(x,t) \ \ \ \forall
(x,t) \in \mathbb{R}^d \times \mathbb{R}^+ .
\end{equation}
Applying to both sides of \eqref{eq: eq-formale-epsilon} the operator $(-\Delta)^{-s} $, namely the convolution against the Riesz kernel $ I_{2s} $ (recall the discussion in Section \ref{sec: not-def}), formally yields
\begin{equation}\label{eq: eq-formale-potenziale-epsilon}
\left(U_{\varepsilon}\right)_t(x,t) = -u^m_\varepsilon(x,t) \ \ \ \forall (x,t) \in \mathbb{R}^d \times \mathbb{R}^+ \, .
\end{equation}
To prove rigorously \eqref{eq: eq-formale-potenziale-epsilon}, we plug into \eqref{eq: weak-sol-eq-debole} (with $ u = u_\varepsilon $) the test function $ \varphi(y,t) := \vartheta(t)\phi(y) $, where $ \vartheta $ is a smooth and compactly supported approximation of $ \chi_{[t_1,t_2]} $ and $ \phi \in \mathcal{D}(\mathbb{R}^d) $. Integrating by parts (in space), letting $ \vartheta $ tend to $ \chi_{[t_1,t_2]} $ and replacing the function $ \phi(y) $ by $\phi(y+x)$, with $x \in \mathbb{R}^d $ fixed, we get:
\begin{equation}\label{eq: u-tempo-bis-a}
\begin{aligned}
 & \int_{\mathbb{R}^d} u_\varepsilon(y,t_2) \phi(y+x) \, \rho(y) \mathrm{d}y - \int_{\mathbb{R}^d} u_\varepsilon(y,t_1) \phi(y+x) \, \rho(y) \mathrm{d}y  \\
= & - \int_{\mathbb{R}^d} \left(\int_{t_1}^{t_2} u^m_\varepsilon(y,t) \, \mathrm{d}t \right) (-\Delta)^s(\phi)(y+x) \, \mathrm{d}y \, .
\end{aligned}
\end{equation}
Integrating \eqref{eq: u-tempo-bis-a} against the Riesz kernel $ I_{2s}(x) $ and using Fubini's Theorem gives (let $z=y+x$)
\begin{equation}\label{eq: u-tempo-4-a}
\begin{aligned}
 & \int_{\mathbb{R}^d} U_\varepsilon(z,t_2) \phi(z) \, \mathrm{d}z - \int_{\mathbb{R}^d} U_\varepsilon(z,t_1) \phi(z) \, \mathrm{d}z \\
= & - \int_{\mathbb{R}^d} \left(\int_{t_1}^{t_2} u^m_\varepsilon(y,t) \, \mathrm{d}t \right) \left( \int_{\mathbb{R}^d}  (-\Delta)^s(\phi)(y+x) I_{2s}(x) \, \mathrm{d}x  \right) \mathrm{d}y =  - \int_{\mathbb{R}^d} \left(\int_{t_1}^{t_2} u^m_\varepsilon(y,t) \, \mathrm{d}t \right) \phi(y) \, \mathrm{d}y \, .
\end{aligned}
\end{equation}
The applicability of Fubini's Theorem is justified thanks to Lemma \ref{lem: decay-conv}, Lemma \ref{lem:decay-lap-1} (recall that $d-2s \ge \gamma$ by assumption)
and to the fact that
$\int_{t_1}^{t_2} u^m_\varepsilon(\cdot,t) \, \mathrm{d}t$ belongs to $ L^1_{\rho}(\mathbb{R}^d) \cap L^\infty(\mathbb{R}^d)$ by \eqref{eq: weak-sol-spazi-u-1}.

By Lemma \ref{lem: prima-esistenza} and Definition \ref{eq:
weak-sol-1}, we know that $ \rho u_\varepsilon(t) $
 converges to $ \mu_\varepsilon $ in $ \sigma(\mathcal{M}(\mathbb{R}^d),C_b(\mathbb{R}^d)) $ as $ t \to 0 $. Hence, letting $ t_1 \to 0 $ in \eqref{eq: u-tempo-4-a}, we find that
\begin{equation}\label{eq: u-tempo-4-b}
\int_{\mathbb{R}^d} U_\varepsilon(x,t_2) \phi(x) \, \mathrm{d}x -
\int_{\mathbb{R}^d} U^{\mu_\varepsilon}(x) \phi(x) \, \mathrm{d}x
= - \int_{\mathbb{R}^d} \left(\int_{0}^{t_2} u^m_\varepsilon(x,t)
\, \mathrm{d}t \right) \phi(x) \, \mathrm{d}x
\end{equation}
for all $ t_2>0 $ and $ \phi \in \mathcal{D}(\mathbb R^d) $. In
fact,
\begin{equation*}\label{eq: u-tempo-4-temp}
\begin{aligned}
\int_{\mathbb{R}^d} U_\varepsilon(x,t_1) \phi(x) \, \mathrm{d}x =
& \int_{\mathbb{R}^d} \left( \int_{\mathbb R^d} I_{2s}(x-y) \,
\rho(y)u_\varepsilon(y,t_1) \,
\mathrm{d}y \right) \phi(x) \, \mathrm{d}x \\
 = &  \int_{\mathbb{R}^d} \underbrace{\left( \int_{\mathbb R^d} I_{2s}(y-x) \, \phi(x) \, \mathrm{d}x \right)}_{U^\phi(y)} \rho(y) u_\varepsilon(y,t_1) \, \mathrm{d}y \, ,
\end{aligned}
\end{equation*}
and in view of Lemma \ref{lem: decay-conv} we know that, in
particular, $ U^\phi \in C_0(\mathbb R^d) $, which allows to pass
to the limit in the integral as $ t_1 \to 0 $. Thanks to the
smoothing effect, the conservation of mass and the hypotheses on $
\rho $, we can provide the following bound for \eqref{eq:
u-tempo-4-b}:
\begin{equation}\label{eq: u-tempo-4-c}
\left| \int_{\mathbb{R}^d} U_\varepsilon(x,t_2) \phi(x) \,
\mathrm{d}x - \int_{\mathbb{R}^d} U^{\mu_\varepsilon}(x) \phi(x)
\, \mathrm{d}x \right| \le \left\| \rho^{-1} \phi \right\|_\infty
K^{m-1} \, \mu(\mathbb{R}^d)^{1+\beta(m-1)} \int_{0}^{t_2}
t^{-\alpha(m-1)} \mathrm{d}t \, .
\end{equation}
Note that the time integral in the r.h.s.\ is finite since $
\alpha(m-1) < 1 $ (recall \eqref{eq: smoothing-effect-exp} for
$p_0=1$). We proved above that $\{u_\varepsilon\}$ converges
pointwise a.e.\ (up to subsequences) to a function $u$ which
satisfies \eqref{eq: weak-sol-spazi-u-1}, \eqref{eq:
weak-sol-spazi-u-2} and \eqref{eq: weak-sol-eq-debole}. If we
exploit once again the smoothing effect and the conservation of
mass, we easily infer that such convergence also takes place in $
\sigma(\mathcal{M}(\mathbb{R}^d),C_0(\mathbb{R}^d))$:
\begin{equation}\label{eq: weak-sol-epsilon}
\lim_{\varepsilon \to 0} \rho u_\varepsilon(t) = \rho u(t)  \ \ \
\textrm{in} \ \sigma(\mathcal{M}(\mathbb{R}^d),C_0(\mathbb{R}^d))
\, , \ \textrm{for a.e.} \ t > 0 \, .
\end{equation}
Using \eqref{eq: weak-sol-epsilon}, the fact that $
\mu_\varepsilon \to \mu $ in $
\sigma(\mathcal{M}(\mathbb{R}^d),C_b(\mathbb{R}^d)) $  and
proceeding exactly as we did in the proof of \eqref{eq:
u-tempo-4-b}, we can let $ \varepsilon \to 0 $ in \eqref{eq:
u-tempo-4-c} to get
\begin{equation}\label{eq: u-tempo-4-d}
\left| \int_{\mathbb{R}^d} U(x,t_2) \phi(x) \, \mathrm{d}x -
\int_{\mathbb{R}^d} U^{\mu}(x) \phi(x) \, \mathrm{d}x \right| \le
\left\| \rho^{-1} \phi \right\|_\infty  K^{m-1} \,
\mu(\mathbb{R}^d)^{1+\beta(m-1)} \int_{0}^{t_2} t^{-\alpha(m-1)}
\mathrm{d}t
\end{equation}
for a.e.\ $ t_2>0 $ and $ \phi \in \mathcal{D}(\mathbb R^d) $,
where we denote as $ U $ the potential of $ \rho u $. Note that,
passing to the limit in \eqref{eq: u-tempo-4-b} for any
nonnegative $ \phi \in \mathcal{D}(\mathbb R^d) $, we deduce in
particular that $ U(x,t) $ is nonincreasing in $t$. Moreover,
\eqref{eq: u-tempo-4-d} implies that $ U(t) $ converges to $
U^{\mu} $ in $ L^1_{\rm loc}(\mathbb R^d) $, whence
\begin{equation}\label{eq: dis-L1-loc-smooth-limite-3}
\lim_{t \to 0} U(x,t) = U^\mu(x)  \ \ \ \textrm{for a.e.} \ x \in
\mathbb{R}^d \, .
\end{equation}
Letting $ \varepsilon \to 0 $ in the conservation of mass
\eqref{eq: cons-mass} (applied to $ u=u_\varepsilon $ and $
\mu=\mu_\varepsilon $), by means e.g.~of Fatou's Lemma we
obtain
\begin{equation}\label{eq: quasi-cons-mass}
\left\| u(t) \right\|_{1,\rho} \le \mu(\mathbb{R}^d) \ \ \
\textrm{for a.e.} \ t>0 \, .
\end{equation}
Due to the compactness results recalled in Section \ref{sec:
not-def}, from \eqref{eq: quasi-cons-mass} we infer that (almost)
every sequence $ t_n \to 0 $ admits a subsequence $ \{ t_{n_k} \}
$ such that $ \{ \rho u(t_{n_k}) \} $ converges to a certain
positive finite Radon measure $ \nu $ in
$\sigma(\mathcal{M}(\mathbb{R}^d),C_c(\mathbb{R}^d))$. Thanks to
\eqref{eq: dis-L1-loc-smooth-limite-3} and \cite[Theorem
3.8]{Landkof} we have that $U^{\nu}(x) = U^{\mu}(x)$ almost
everywhere. Alternatively, such identity can be proved by passing
to the limit in $ \int_{\mathbb{R}^d} U(x,t_{n_k}) \phi(x) \,
\mathrm{d}x $, recalling that $ U(t_{n_k}) \to U^\mu $ in $
L^1_{\rm loc}(\mathbb R^d) $ as $ k \to \infty $. Theorem 1.12 of
\cite{Landkof} then ensures that two positive finite Radon
measures whose potentials are equal almost everywhere must
coincide. Hence, $ \nu=\mu $ and the limit measure does not depend
on the particular subsequence, so that
\begin{equation*}
\lim_{t \to 0} \rho u(t)  = \mu  \ \ \ \textrm{in} \
\sigma(\mathcal{M}(\mathbb{R}^d),C_c(\mathbb{R}^d)) \, .
\end{equation*}
In order to show that convergence also takes place in $
\sigma(\mathcal{M}(\mathbb{R}^d),C_b(\mathbb{R}^d)) $, it is
enough to establish that
\begin{equation}\label{eq: weak-2}
\lim_{t \to 0} \left\| u(t) \right\|_{1,\rho} = \mu(\mathbb{R}^d)
\, .
\end{equation}
Since $ \rho u(t) $ converges to $ \mu $ in $
\sigma(\mathcal{M}(\mathbb{R}^d),C_c(\mathbb{R}^d)) $ as $ t \to 0
$, we know that
\begin{equation}\label{eq: weak-3}
\mu(\mathbb{R}^d) \le \liminf_{t \to 0} \left\| u(t)
\right\|_{1,\rho} ,
\end{equation}
see again Section \ref{sec: not-def}. But \eqref{eq: weak-3} and
\eqref{eq: quasi-cons-mass} entail \eqref{eq: weak-2}.

Finally, the validity of the smoothing estimate \eqref{eq:
smoothing-effect-general} is just a consequence of passing to the
limit in \eqref{eq: smoothing-effect} (applied to $u_\varepsilon$ and $p_0=1$) as $
\varepsilon \to 0 $ (recall that $ \{u_\varepsilon\} $ converges pointwise to $u$).

At the beginning of the proof we required $ \mu $ to be compactly
supported. Otherwise, take
a sequence of compactly supported measures $ \{ \mu_n \}  $
converging to $ \mu $ in $
\sigma(\mathcal{M}(\mathbb{R}^d),C_b(\mathbb{R}^d)) $  and
consider the corresponding sequence of solutions $ \{ u_n \} $ to
\eqref{eq: barenblatt-regular}. Estimates \eqref{eq:
prima-esistenza-energy-1-epsilon} and \eqref{eq:
prima-esistenza-energy-2-epsilon}, as well as the conservation of mass and the smoothing effect, are clearly stable as
$ \varepsilon \to 0 $, thus they also hold upon replacing $
u_\varepsilon $ with $ u_n $ and $ \mu_\varepsilon $ with $ \mu_n
$. Hence, using the same techniques as above, one proves that $\{ u_n \} $ converges to a solution $ u $
of \eqref{eq: barenblatt-regular} starting from $ \mu $.
\end{proof}

\subsection{Existence and uniqueness of initial traces}
In order to prove Theorem \ref{thm: initial}, we need the next preliminary result.
\begin{lem}\label{eq: lemma-massa-potenziali}
Let $ \nu $ be a signed finite Radon measure such that $ U^{\nu} \ge 0 $ almost everywhere. Then $ \nu(\mathbb{R}^d) \ge 0 $.
\begin{proof}
From the assumptions on $ U^{\nu} $ and thanks to Fubini's Theorem, there holds
\begin{equation}\label{eq: hp-U-nu}
\int_{\mathbb{R}^d} \chi_{B_n}(y) \, U^{\nu}(y) \, \mathrm{d}y = \int_{\mathbb{R}^d} \left( I_{2s} \ast \chi_{B_n} \right)(x)  \, \mathrm{d}\nu = k_{d,s} \int_{\mathbb{R}^d} \left( \int_{B_n} |x-y|^{-d+2s} \, \mathrm{d}y  \right) \mathrm{d}\nu \ge 0 \quad \forall n \in \mathbb{N} \, .
\end{equation}
Performing the change of variable $ z=y/n $, the last inequality in \eqref{eq: hp-U-nu} reads
\begin{equation}\label{eq: hp-U-nu-2}
\int_{\mathbb{R}^d} \left( \int_{B_1} |x/n-z|^{-d+2s} \, \mathrm{d}z  \right) \mathrm{d}\nu \ge 0 \quad \forall n \in \mathbb{N} \, .
\end{equation}
It is plain that for \emph{every} $x \in \mathbb{R}^d $ the sequence $ \{ \int_{B_1} |x/n-z|^{-d+2s} \, \mathrm{d}z \}  $ converges to the positive constant $ \int_{B_1} |z|^{-d+2s} \, \mathrm{d}z $ and it is dominated by the latter. Passing to the limit as $ n \to \infty $ in \eqref{eq: hp-U-nu-2}, we get the assertion by dominated convergence (recall that $ \nu $ is finite).
\end{proof}
\end{lem}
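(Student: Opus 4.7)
The strategy is to approximate $\nu(\mathbb{R}^d)$ by a sequence of integrals that are manifestly nonnegative thanks to the assumption $U^\nu \ge 0$ a.e., and then extract the total mass of $\nu$ in the limit via a scaling argument.

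\emph{Step 1 (from positivity to a test against $\nu$).} I would test the hypothesis against $\chi_{B_n}$: since $U^\nu \ge 0$ a.e., for each $n \in \mathbb{N}$ one has
\[
0 \le \int_{B_n} U^\nu(y)\,\mathrm{d}y = k_{d,s}\int_{\mathbb{R}^d}\!\left(\int_{B_n}|x-y|^{-d+2s}\,\mathrm{d}y\right)\mathrm{d}\nu(x),
\]
by Fubini. This is justified because $|\cdot|^{-d+2s}$ is locally integrable and $|\nu|$ is a finite measure, so the double integral converges absolutely.

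\emph{Step 2 (rescaling).} The change of variable $y = nz$ gives
\[
\int_{B_n}|x-y|^{-d+2s}\,\mathrm{d}y = n^{2s}\int_{B_1}|x/n - z|^{-d+2s}\,\mathrm{d}z,
\]
so dividing by $n^{2s}$ yields
\[
0 \le k_{d,s}\int_{\mathbb{R}^d}\!\left(\int_{B_1}|x/n-z|^{-d+2s}\,\mathrm{d}z\right)\mathrm{d}\nu(x) \qquad \forall n \in \mathbb{N}.
\]

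\emph{Step 3 (limit $n \to \infty$).} For each $x \in \mathbb{R}^d$ the inner integral converges, as $n \to \infty$, to the strictly positive constant $C := \int_{B_1}|z|^{-d+2s}\,\mathrm{d}z$ by continuity of the convolution $|\cdot|^{-d+2s}*\chi_{B_1}$. If one can pass to the limit inside the outer integral, this gives $C\,\nu(\mathbb{R}^d) \ge 0$, and the conclusion follows.

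\emph{Main obstacle.} The delicate point is justifying dominated convergence against the signed measure $\nu$: one needs a uniform majorant in $x$ for $\int_{B_1}|x/n - z|^{-d+2s}\,\mathrm{d}z$. The cleanest way is to observe that the convolution of two nonnegative, radial, radially decreasing functions is maximized at the origin, hence
\[
\int_{B_1}|y-z|^{-d+2s}\,\mathrm{d}z \le \int_{B_1}|z|^{-d+2s}\,\mathrm{d}z = C \qquad \forall y \in \mathbb{R}^d,
\]
(alternatively, one can split $\{|y|\le 2\}$ and $\{|y|>2\}$ and estimate each piece by hand). Since $|\nu|(\mathbb{R}^d) < \infty$, the constant dominant $C$ allows us to apply dominated convergence and conclude $\nu(\mathbb{R}^d) \ge 0$.
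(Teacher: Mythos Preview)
Your proof is correct and follows essentially the same route as the paper's: test $U^\nu$ against $\chi_{B_n}$, apply Fubini, rescale to $B_1$, and use dominated convergence with the dominant $\int_{B_1}|z|^{-d+2s}\,\mathrm{d}z$. Your justification of the dominant via the fact that the convolution of nonnegative radially decreasing functions is maximized at the origin is a bit more explicit than the paper's, which simply asserts the domination as ``plain''.
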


\begin{proof}[Proof of Theorem \ref{thm: initial}]\rm
Consider a function $u$ satisfying \eqref{eq: weak-sol-spazi-u-1}, \eqref{eq: weak-sol-spazi-u-2} and \eqref{eq: weak-sol-eq-debole}. Monotonicity in time of the associated potential is proved as we did after \eqref{eq: eq-formale-potenziale-epsilon}: notice that, for such an argument to work, the running assumptions on $\gamma$ are required. The same proof holds if, instead of $ u \in L^\infty( \mathbb{R}^d \times (\tau , \infty ) ) $, $u$ is only supposed to satisfy $\int_{t_1}^{t_2} u^m(\cdot,\tau) \, \mathrm{d}\tau\in L^1_\rho({\mathbb R}^d)$ for all $t_2>t_1>0$. Existence of an initial trace $ \mu $, meant as convergence in $\sigma(\mathcal{M}(\mathbb{R}^d),C_c(\mathbb{R}^d)) $ along subsequences of a given sequence of times tending to $t=0$, follows by compactness, since we are assuming that solutions belong to $L^\infty( (0,\infty); L^1_{\rho}(\mathbb{R}^d) )$.
Uniqueness of such a trace is established proceeding as we did after \eqref{eq: dis-L1-loc-smooth-limite-3}, using the monotonicity of potentials and the results of \cite{Landkof}.

We are left with proving that convergence to $ \mu $ takes places also in $\sigma(\mathcal{M}(\mathbb{R}^d),C_b(\mathbb{R}^d)) $, namely that $ \operatorname{ess}\lim_{t \to 0} \int_{\mathbb{R}^d} u(x,t) \, \rho(x)\mathrm{d}x = \mu(\mathbb{R}^d) $. By weak$^\ast$ lower semi-continuity, it is then enough to show that $\operatorname{ess}\limsup_{t \to 0} \int_{\mathbb{R}^d} u(x,t) \, \rho(x)\mathrm{d}x \le \mu(\mathbb{R}^d)$. Let $ U(\cdot,t) $ be the potential of $ \{ \rho(\cdot) u(\cdot,t) \} $. Again, the monotonicity in time of $ U(\cdot,t) $ and the first part of the proof ensure that $ U^\mu - U(\cdot,t) \ge 0 $ almost everywhere. Therefore, Lemma \ref{eq: lemma-massa-potenziali} applied to the signed finite Radon measure $ \mathrm{d}\nu=\mathrm{d}\mu-u(x,t)\rho(x)\mathrm{d}x $ entails $ \mu(\mathbb{R}^d) \ge \int_{\mathbb{R}^d} u(x,t) \, \rho(x)\mathrm{d}x $. Letting $ t \to 0 $ concludes the proof.

\end{proof}

\subsection{Strong solutions and decrease of the norms}\label{sect: strong}
In order to justify rigorously some of the above computations,
it is essential to show that the weak solutions constructed in Lemma \ref{lem: prima-esistenza} are strong. By a ``strong solution'', following \cite[Section 6.2]{DQRV}, we mean a weak solution $u$ 
such that
$u_t \in L^\infty((\tau,\infty),L^1_{\rho}(\mathbb{R}^d))$ for all $\tau>0$.
The fact that our solutions are indeed strong can be proved as in \cite[Section 8.1]{DQRV}. The first step consists in showing that $\rho(\cdot) u_t(\cdot,t)$ is a finite Radon measure which satisfies the estimate
\begin{equation}\label{eq: stima-Radon-1}
\left\| \rho \, u_t(t) \right\|_{\mathcal{M}(\mathbb{R}^d)} \le \frac{2 \left\| u_0 \right\|_{1,\rho} }{ (m-1) t }  \ \ \ \forall t>0 \, ,
\end{equation}
where now, by $\mathcal{M}(\mathbb{R}^d)$ we mean the Banach space of \emph{signed} finite Radon measures on $\mathbb{R}^d$, equipped with the usual norm of the variation. As in \cite[Lemma 8.5]{Vaz07}, this follows by using the inequality
\begin{equation}\label{eq: L1-cont}
\int_{\mathbb{R}^d} \left[ u(x,t)- \tilde{u}(x,t) \right]_{+} \, \rho(x) \mathrm{d}x
\leq \int_{\mathbb{R}^d}  \left[ u_0(x)- \tilde{u}_0(x) \right]_{+} \, \rho(x) \mathrm{d}x \ \ \ \forall t>0 \, ,
\end{equation}
where $u$ and $\tilde{u}$ are the solutions to \eqref{eq: barenblatt-u_0} \emph{constructed in Lemma \ref{lem: prima-esistenza}} corresponding to the initial data $u_0 $ and $\tilde{u}_0$, respectively. Such inequality does hold for the approximate solutions $u_\eta $ and $\tilde{u}_\eta $ used in the proof of Lemma \ref{lem: prima-esistenza} (see \cite[Proposition 3.4]{PT2}), whence \eqref{eq: L1-cont} follows by passing to the limit.
Afterwards, as \cite[Lemma 8.1]{DQRV}, one proves that $ z:=u^{\frac{m+1}{2}} $ fulfills \eqref{eq: prima-esistenza-energy-2}. In particular,
\begin{equation}\label{eq z-L2-est}
z_t \in L^2_{\rm loc}((0,\infty); L^2_{\rho}(\mathbb{R}^d)) \, .
\end{equation}
Thanks to \eqref{eq: stima-Radon-1} and \eqref{eq z-L2-est}, the abstract result contained in \cite[Theorem 1.1]{BG} ensures that $u_t  \in L^1_{\rm loc}((0,\infty); L^1_{\rho}(\mathbb{R}^d))$.
In particular, \eqref{eq: stima-Radon-1} holds with $ \| \rho \, u_t(t) \|_{\mathcal{M}(\mathbb{R}^d)} $ replaced by $ \| u_t(t) \|_{1,\rho} $, whence the assertion.

\smallskip

An important consequence of the fact that the solutions constructed in Lemma \ref{lem: prima-esistenza} are strong is the {\it decrease of their $ L^p_{\rho}$ norms} for any $ p \in [1,\infty] $. Indeed, by definition of strong solution, for any $p\in (1,\infty),$ we are allowed
 to multiply the differential equation in \eqref{eq: barenblatt-u_0} by $u^{p-1} $ and integrate in $ \mathbb{R}^d \times [t_1,t_2] $. By  Stroock-Varopoulos inequality \eqref{eq: strook-vareq} (let $v=u^m$ and $ q=(p+m-1)/m $), we get
\begin{equation}\label{eq: est-strong}
\int_{\mathbb{R}^d} u^p(x,t_2)  \, \rho(x) \mathrm{d}x - \int_{\mathbb{R}^d} u^p(x,t_1)  \, \rho(x) \mathrm{d}x  = -p \int_{t_1}^{t_2} \! \int_{\mathbb{R}^d} u^{p-1}(x,t) (-\Delta)^s(u^m)(x,t) \, \mathrm{d}x \mathrm{d}t \le 0
\end{equation}
for all $ t_2>t_1>0 $. The validity of \eqref{eq: est-strong} down to $t_1=0$ 
follows by using the approximate solutions $ \{ u_\eta \} $ from the proof of Lemma \ref{lem: prima-esistenza} 
and letting $ \eta \to 0$.
The case $ p=\infty $ can be handled by approximation.

\section{Uniqueness of weak solutions}\label{sect: uniqueness}
Prior to the proof of Theorem \ref{thm: teorema-uniqueness}, we
need some technical lemmas. Hereafter, by ``weak solution'' to
\eqref{eq: barenblatt-regular}, we shall mean a solution in the
sense of Definition \ref{eq: weak-sol-1}.
\begin{lem}\label{lem: monotonicity}
Let $d>2s$ and assume that $\rho$ satisfies \eqref{eq: ass-rho}
for some $\gamma \in [0,2s) \cap [0,d-2s]$ and $ \gamma_0 \in
[0,\gamma] $. Let $ u $ be a weak solution to \eqref{eq:
barenblatt-regular}. Then the potential $U(\cdot,t)$ of $
\rho(\cdot) u(\cdot,t) $ admits an absolutely continuous version
(in $ L^1_{\rm loc}(\mathbb{R}^d) $) which is nonincreasing in $t$.
\end{lem}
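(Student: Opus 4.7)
My plan is to reproduce, in the present more general setting, the derivation of the pointwise-in-time identity for the potential carried out around equation \eqref{eq: eq-formale-potenziale-epsilon} in the proof of Theorem \ref{thm: teorema-esistenza}. That derivation used only the weak formulation \eqref{eq: weak-sol-eq-debole} together with the integrability properties \eqref{eq: weak-sol-spazi-u-1}--\eqref{eq: weak-sol-spazi-u-2}, so it transfers essentially verbatim to any weak solution in the sense of Definition \ref{eq: weak-sol-1}.

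First, I fix $0<t_1<t_2$ and $x\in\mathbb{R}^d$, and plug into \eqref{eq: weak-sol-eq-debole} the test function $\varphi(y,t):=\vartheta(t)\phi(y+x)$, where $\phi\in\mathcal{D}(\mathbb{R}^d)$ and $\vartheta\in C^\infty_c((0,\infty))$ approximates $\chi_{[t_1,t_2]}$. Applying \eqref{eq: id-parti-nonlocal-Hs-Hs} on the nonlocal term and passing to the limit in $\vartheta$, I obtain
\begin{equation*}
\int_{\mathbb{R}^d}[u(y,t_2)-u(y,t_1)]\phi(y+x)\rho(y)\,\mathrm{d}y = -\int_{\mathbb{R}^d}\Bigl(\int_{t_1}^{t_2} u^m(y,t)\,\mathrm{d}t\Bigr)(-\Delta)^s(\phi)(y+x)\,\mathrm{d}y.
\end{equation*}
I then multiply this identity by $I_{2s}(x)$, integrate in $x\in\mathbb{R}^d$, and apply Fubini's theorem together with the identity $I_{2s}\ast(-\Delta)^s\phi=\phi$. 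After the change of variable $z=y+x$, the left-hand side becomes $\int_{\mathbb{R}^d}[U(z,t_2)-U(z,t_1)]\phi(z)\,\mathrm{d}z$, while the right-hand side collapses to $-\int_{\mathbb{R}^d}\bigl(\int_{t_1}^{t_2}u^m(y,t)\,\mathrm{d}t\bigr)\phi(y)\,\mathrm{d}y$. The applicability of Fubini is ensured by Lemmas \ref{lem: decay-conv} and \ref{lem:decay-lap-1}, combined with the integrability of $\int_{t_1}^{t_2}u^m(\cdot,t)\,\mathrm{d}t$ in $L^1_\rho(\mathbb{R}^d)\cap L^\infty(\mathbb{R}^d)$ guaranteed by \eqref{eq: weak-sol-spazi-u-1}, and by the assumption $\gamma\leq d-2s$, exactly as in the justification following \eqref{eq: u-tempo-4-a}.

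Since $\phi\in\mathcal{D}(\mathbb{R}^d)$ is arbitrary, I conclude the a.e.\ identity
\begin{equation*}
U(x,t_2)-U(x,t_1) = -\int_{t_1}^{t_2}u^m(x,t)\,\mathrm{d}t \qquad \text{for a.e.\ }x\in\mathbb{R}^d,
\end{equation*}
valid for every $0<t_1<t_2$. Fixing any $t_0>0$ and setting $\tilde{U}(x,t):=U(x,t_0)-\int_{t_0}^{t}u^m(x,\tau)\,\mathrm{d}\tau$ then produces a representative of $U$ that coincides with $U(\cdot,t)$ in $L^1_{\mathrm{loc}}(\mathbb{R}^d)$ for every $t$, is absolutely continuous as an $L^1_{\mathrm{loc}}(\mathbb{R}^d)$-valued function of $t$, and is manifestly nonincreasing in $t$ since $u^m\geq 0$. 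The main obstacle is purely technical: verifying that the triple integral of $\int_{t_1}^{t_2}u^m(y,t)\,\mathrm{d}t\cdot(-\Delta)^s\phi(y+x)\cdot I_{2s}(x)$ is absolutely convergent despite the slow decay of the Riesz kernel; this is precisely where the restriction $\gamma\leq d-2s$ enters, through the estimate $\rho^{-1}(x)\leq c^{-1}(1+|x|^\gamma)$ paired against $I_{2s}(x)\sim|x|^{-(d-2s)}$ at infinity.
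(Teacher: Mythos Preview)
Your proposal is correct and follows essentially the same approach as the paper: the paper's own proof merely refers back to the derivation of \eqref{eq: eq-formale-potenziale-epsilon} (i.e., the argument leading to \eqref{eq: u-tempo-4-a}), which is precisely what you have written out in detail. Your explicit construction of the absolutely continuous representative $\tilde{U}$ and the remark on where the hypothesis $\gamma\le d-2s$ is used are useful clarifications of what the paper leaves implicit.
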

\begin{proof}
One proceeds as in the first part of the proof of Theorem \ref{thm: teorema-esistenza},
using the same techniques we exploited to prove \eqref{eq: eq-formale-potenziale-epsilon} rigorously.
\end{proof}
\begin{lem}\label{lem: potenziale}
Let $d>2s$ and assume that $\rho$ satisfies \eqref{eq: ass-rho}
for some $\gamma \in [0,2s) \cap [0,d-2s]$ and $ \gamma_0 \in
[0,\gamma] $. Let $ u $ be a weak solution to \eqref{eq:
barenblatt-regular}, taking the initial datum $\mu$ whose
potential is $ U^\mu $. Then there holds
\begin{equation}\label{eq: dato-iniziale-ovunque}
 \lim_{t \downarrow 0} U(x,t)  = U^\mu(x)  \ \ \ \forall x \in \mathbb{R}^d \, .
\end{equation}
\end{lem}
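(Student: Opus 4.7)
The plan is to show that the pointwise limit $V(x):=\lim_{t\downarrow 0}U(x,t)=\sup_{t>0}U(x,t)$, which is well-defined for every $x\in\mathbb{R}^d$ thanks to the monotonicity granted by Lemma \ref{lem: monotonicity}, coincides with $U^\mu$ everywhere.

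First, I would establish $V = U^\mu$ Lebesgue almost everywhere by reproducing the derivation of \eqref{eq: u-tempo-4-a}--\eqref{eq: u-tempo-4-c} from the proof of Theorem \ref{thm: teorema-esistenza}, applied directly to $u$ and $\mu$ in place of $u_\varepsilon$ and $\mu_\varepsilon$. Namely, testing \eqref{eq: weak-sol-eq-debole} against $\vartheta(t)\phi(y)$ with $\phi\in\mathcal{D}(\mathbb{R}^d)$, letting $\vartheta\to\chi_{[t_1,t_2]}$, integrating against the Riesz kernel $I_{2s}$ (Fubini being justified by Lemmas \ref{lem:decay-lap-1} and \ref{lem: decay-conv} together with the smoothing effect), and finally letting $t_1\downarrow 0$ through the initial condition \eqref{eq: weak-sol-initial}, yields the bound
\[
\left|\int U(x,t_2)\phi(x)\,\mathrm{d}x-\int U^\mu(x)\phi(x)\,\mathrm{d}x\right|\le\|\rho^{-1}\phi\|_\infty\,K^{m-1}\,\mu(\mathbb{R}^d)^{1+\beta(m-1)}\int_0^{t_2}t^{-\alpha(m-1)}\,\mathrm{d}t.
\]
Since $\alpha(m-1)<1$, the right-hand side vanishes as $t_2\downarrow 0$, giving $U(\cdot,t)\to U^\mu$ in $L^1_{\rm loc}(\mathbb{R}^d)$. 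Combined with the monotone pointwise convergence $U(\cdot,t)\uparrow V$, this forces $V=U^\mu$ almost everywhere.

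The main obstacle is upgrading this identity to pointwise equality. The key is that both $V$ and $U^\mu$ are lower semicontinuous on $\mathbb{R}^d$: $U^\mu$ as the Riesz potential of a positive finite Radon measure, $V$ as a supremum of such potentials. By Fatou's lemma applied to the positive l.s.c.\ function $y\mapsto I_{2s}(x_0-y)$ and the narrow convergence $\rho u(t)\to\mu$ in $\sigma(\mathcal{M},C_b)$ from \eqref{eq: weak-sol-initial}, one has $U^\mu(x_0)\le\liminf_{t\downarrow 0}U(x_0,t)=V(x_0)$ for every $x_0\in\mathbb{R}^d$. For the reverse inequality, the a.e.\ identity gives $\fint_{B_r(x_0)}V\,\mathrm{d}y=\fint_{B_r(x_0)}U^\mu\,\mathrm{d}y$ for every $r>0$ and $x_0$; the lower semicontinuity of $V$ yields $V(x_0)\le\liminf_{r\to 0}\fint_{B_r(x_0)}V\,\mathrm{d}y$; and the classical potential-theoretic fact (see \cite{Landkof}) that $\lim_{r\to 0}\fint_{B_r(x_0)}U^\mu\,\mathrm{d}y=U^\mu(x_0)$ for every $x_0$---trivial when $U^\mu(x_0)=+\infty$, and, at points of finiteness, a consequence of the decay $\mu(B_r(x_0))=o(r^{d-2s})$ combined with dominated convergence---then gives $V(x_0)\le U^\mu(x_0)$. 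Combining the two inequalities yields \eqref{eq: dato-iniziale-ovunque}. The delicate step is precisely this final upgrade, since two l.s.c.\ functions agreeing Lebesgue-a.e.\ need not coincide everywhere in general; the argument works here only by exploiting the specific averaging property of Riesz potentials of positive measures.
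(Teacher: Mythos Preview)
Your first step has a circularity: the quantitative bound you write, with the factor $K^{m-1}\int_0^{t_2}t^{-\alpha(m-1)}\,\mathrm{d}t$, relies on the smoothing estimate \eqref{eq: smoothing-effect-general}. That estimate is proved in Theorem~\ref{thm: teorema-esistenza} only for the \emph{constructed} solution, not for an arbitrary weak solution in the sense of Definition~\ref{eq: weak-sol-1}; and Lemma~\ref{lem: potenziale} is precisely a tool for the uniqueness proof, so you cannot yet identify the two. The fix is simple and needs no rate: by Fubini (justified for each fixed $t>0$ by \eqref{eq: weak-sol-spazi-u-1} alone, as in the paper's derivation of \eqref{eq: u-tempo-4-a}) one has $\int U(x,t)\phi(x)\,\mathrm{d}x=\int U^\phi(y)\,\rho(y)u(y,t)\,\mathrm{d}y$, and since $U^\phi\in C_0(\mathbb{R}^d)$ by Lemma~\ref{lem: decay-conv}, the initial condition \eqref{eq: weak-sol-initial} gives convergence to $\int U^\phi\,\mathrm{d}\mu=\int U^\mu\phi$. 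Combined with monotonicity this yields $V=U^\mu$ a.e.; this is essentially what the paper extracts from \cite[Theorem~3.8]{Landkof}.

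For the upgrade from a.e.\ to everywhere, your route and the paper's genuinely diverge. The paper invokes a structure theorem (\cite[Lemma~1.12]{Landkof}): an increasing pointwise limit of Riesz potentials of positive measures is necessarily of the form $U^\nu+A$ for some positive measure $\nu$ and constant $A\ge 0$; matching with $U^\mu$ a.e.\ then forces $A=0$ (corollary p.~129 of \cite{Landkof}) and $U^\nu=U^\mu$ everywhere (\cite[Theorem~1.12]{Landkof}). Your argument instead bounds $V$ from both sides directly, via Portmanteau for the lower bound and the ball-average property $\fint_{B_r(x_0)}U^\mu\to U^\mu(x_0)$ for the upper bound. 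The paper's route has the advantage of absorbing for free a subtlety you glossed over: Lemma~\ref{lem: monotonicity} gives monotonicity of $t\mapsto U(x,t)$ only as an $L^1_{\rm loc}$ curve, hence for \emph{almost every} $x$, so your claim that $V(x)=\sup_{t>0}U(x,t)$ is well-defined and l.s.c.\ at \emph{every} $x$ needs a small extra step (e.g., take the supremum over a countable dense set of times). Once that is done, your hands-on argument is correct and more self-contained than the paper's black-box citations.
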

\begin{proof}
It is a direct application of Theorem 3.9 of \cite{Landkof} but, for the reader's convenience, we give some details.

Thanks to Theorem 3.8 of \cite{Landkof} and to the monotonicity
ensured by Lemma \ref{lem: monotonicity}, we have that the limit in
\eqref{eq: dato-iniziale-ovunque} is taken at least for a.e.\ $ x
\in \mathbb{R}^d$. However, for what follows it will be crucial to
prove that it is taken \emph{for every} $ x \in
\mathbb{R}^d$. To this end we make use again of the monotonicity
property provided by Lemma \ref{lem: monotonicity}. In fact, Lemma
1.12 of \cite{Landkof} shows that, as a consequence of the
monotonicity of potentials, there exists a positive finite Radon measure
$\nu$, whose potential is denoted by $U^{\nu}$, and a constant $A
\ge 0$ such that
\begin{equation*}
 \lim_{t \downarrow 0} U(x,t)  = U^\nu(x) + A \ \ \ \forall x \in \mathbb{R}^d  \, .
\end{equation*}
Since \eqref{eq: dato-iniziale-ovunque} holds almost everywhere,
\begin{equation}\label{eq: dato-iniziale-ovunque-prova-2}
U^\mu(x)  = U^\nu(x) + A \ \ \ \textrm{for a.e.\ }  x \in
\mathbb{R}^d \, .
\end{equation}
But using the corollary at page 129 of \cite{Landkof}, from
\eqref{eq: dato-iniziale-ovunque-prova-2} we deduce that
necessarily $A=0$. Hence, \eqref{eq:
dato-iniziale-ovunque-prova-2} implies that $U^\nu=U^\mu$ almost
everywhere, and from Theorem 1.12 of \cite{Landkof} we know that
two potentials coinciding a.e.\ in fact coincide everywhere,
whence \eqref{eq: dato-iniziale-ovunque} follows.
\end{proof}

\subsection{Main ideas in the proof of uniqueness} Since the proof of Theorem \ref{thm:
teorema-uniqueness} is rather delicate, we point out its main ingredients. We should note that from a
general viewpoint it is based on a ``duality method'', and in
particular it is modeled on the uniqueness proof given by M. Pierre in
\cite{Pierre}. We comment again that our uniqueness
result seems to be new even if $s=1$, in the weighted case, or if $\rho\equiv 1$ when $s\in(0,1)$.

\smallskip

Let $u_1$ and $u_2$ be two weak solutions to \eqref{eq:
barenblatt-regular} such that they both take a common positive,
finite Radon measure $\mu$ as initial datum. We denote as $U_1(\cdot,t)$
and $ U_2(\cdot,t)$ the potentials of $ \rho(\cdot) u_1(\cdot,t) $
and $ \rho(\cdot) u_2(\cdot,t) $, respectively. Fix once for all
the parameters $h,T>0$ and consider the function
\begin{equation}\label{eq: definition-g}
g(x,t) := {U}_2(x,t+h) - {U}_1(x,t) \ \ \ \forall (x,t) \in
\mathbb{R}^d \times (0,T] \, .
\end{equation}
Proceeding again as in the proof of Theorem \ref{thm: teorema-esistenza} (under the hypothesis $ \gamma \le d-2s $, see
the proof of \eqref{eq: eq-formale-potenziale-epsilon}), we get that $ g(\cdot,t) $ is an
absolutely continuous curve (for instance in
$L^1_{\rm loc}(\mathbb{R}^d)$) satisfying
\begin{equation}\label{eq: equation-g}
\rho(x) g_t(x,t) =\rho(x) \left( u_1^m(x,t) -
u_2^m(x,t+h) \right)=-a(x,t)(-\Delta)^s(g)(x,t) \ \
\end{equation}
for a.e.\ $(x,t) \in  \mathbb{R}^d \times (0,T)$, where
we define the function $a$ as
\begin{equation}\label{eq: defa}
a(x,t):=
\begin{cases}
\frac{{u}_1^m(x,t)-{u}_2^m(x,t+h)}{{u}_1(x,t)-{u}_2(x,t+h)}\ \ \ & \textrm{if}\ {u}_1(x,t)\not={u}_2(x,t+h) \, , \\
0 & \textrm{if}\ {u}_1(x,t)={u}_2(x,t+h) \, ,
\end{cases}
\end{equation}
and we used the fact that, thanks to the properties of Riesz potentials,
$$ (-\Delta)^s(g)(x,t)= \rho(x) u_2(x,t+h)-\rho(x) u_1(x,t) \, . $$
Note that, since $m>1$ and $ u_1,u_2 \in L^\infty(\mathbb{R}^d \times (\tau,\infty)) $ for all $ \tau>0 $, $ a $ is a nonnegative
function belonging to $ L^\infty(\mathbb{R}^d \times
(\tau,\infty)) $ for all $ \tau>0 $.

Hence $g$ is a solution to the {\it linear fractional} equation
\eqref{eq: equation-g}. Moreover, by Lemmas \ref{lem:
monotonicity} and \ref{lem: potenziale}, $g(x, 0)\leq 0$ for a.e.\ $x  \in \mathbb R^d$. If we could apply the maximum principle, then we would
get $g\leq 0$ in $\mathbb R^d\times (0,\infty)$. This would
imply $u_1\leq u_2$ and, by interchanging the roles of $u_1$ and $u_2$, $u_1 = u_2$. However, a priori a maximum principle is not available for solutions to \eqref{eq: equation-g}. We then consider the ``dual'' problem
\begin{equation*}\label{e500}
\begin{cases}
\rho(x) \varphi_t = (-\Delta)^s (a\varphi)  & \textrm{in}\,\; \mathbb
R^d\times (0,T) \, , \\
\varphi(x,T)=\psi(x) & \textrm{on } \mathbb{R}^d \times
\{T\} \, ,
\end{cases}
\end{equation*} for any
$\psi\in \mathcal{D}_+(\mathbb{R}^d)$. Suppose for a moment that
it admits a unique smooth solution $\varphi$. Multiplying \eqref{eq: equation-g} by $\varphi$ and integrating by parts
we formally obtain
\begin{equation}\label{e501} \int_{\mathbb R^d}
g(x, T) \rho(x)\psi(x)\,\mathrm{d}x =\,\int_{\mathbb
R^d}\varphi(x,0) g(x,0)\,\mathrm{d}x.
\end{equation}
The conclusion would again follow should a maximum principle for \eqref{e501} hold, and in order to justify rigorously its applicability a further approximation is necessary. In fact, for every $n\in \mathbb N$ and
$\varepsilon>0$, we consider a family $\{\psi_{n,\varepsilon}\}$ which solves, in a sense that will be clarified later, the problem
\begin{equation}\label{eq: equation-psi}
\begin{cases}
\rho(x) \left(\psi_{n,\varepsilon}\right)_t = (-\Delta)^s\left[\left(a_n+\varepsilon\right)\psi_{n,\varepsilon}\right] & \textrm{in } \mathbb{R}^d \times (0,T) \, , \\
\psi_{n,\varepsilon} =  \psi  & \textrm{on } \mathbb{R}^d \times
\{T\} \, ,
\end{cases}
\end{equation}
where $ \psi\in\mathcal{D}_+({\mathbb R}^d) $. The sequence
$\{a_n\} $ is a suitable approximation of the function $a $
defined in \eqref{eq: defa}. In particular we suppose that, for
every $n\in \mathbb N$, $a_n(x,t)$ is a piecewise constant
function of $t$ (regular in $x$) on the time intervals
$(T-{(k+1)T}/{n}, T-{kT}/{n}] $, for any $ k \in \{0, \ldots, n-1
\}$. Thanks to Theorem \ref{thm: self-adj} and to Proposition
\ref{pro: laplaciano-Lp} below, we are then able to treat problem
\eqref{eq: equation-psi} by means of standard semigroup theory.
Here the Markov property for the linear semigroup associated to
the operator $A = \rho^{-1}(-\Delta)^s$ will have a crucial role.
Let us mention that in \cite[Theorem 1]{Pierre}, where $s=1$, $
\rho \equiv 1$, in view of standard parabolic theory it was not
necessary to approximate the function $a$ by a piecewise constant
function of $t$. Using the family $ \{ \psi_{n,\varepsilon} \} $ and
passing to the limit as $n\to \infty$ and then as $\varepsilon\to
0$ we get the next crucial identity:
\begin{equation}\label{eq: equation-psi-g-derivata-parti-tempo-k-limite}
\int_{\mathbb{R}^d} g(x,T) \psi(x) \, \rho(x) \mathrm{d}x =
\int_{\mathbb{R}^d} g(x,t) \, \mathrm{d}\nu(t) \ \ \ \textrm{for
a.e.\ } t \in (0,T) \,,
\end{equation}
where $\{\nu(t)\}$ is a specific family of positive finite Radon
measures. More precisely, $\nu(t)$ is the limit in
$\sigma(\mathcal M(\mathbb R^d), C_b(\mathbb R^d))$ as
$\varepsilon\to 0$ of $\{ \rho(\cdot)
\psi_\varepsilon(\cdot,t)\}$, where $\psi_\varepsilon $ is in turn
the weak limit in $L^2_{\rho}(\mathbb R^d\times (\tau, T))$ (for
all $\tau\in (0,T)$) as $n\to\infty$ of $\{\psi_{n,\varepsilon} \}.$ Note that, roughly speaking,
\eqref{eq: equation-psi-g-derivata-parti-tempo-k-limite}
corresponds to identity \eqref{e501} in the previous formal
argument. Finally, we prove rigorously that the r.h.s.~of
\eqref{eq: equation-psi-g-derivata-parti-tempo-k-limite} has a
nonpositive limit as $t\to 0$, whence the conclusion follows.

\subsection{Construction and properties of the family $\{\psi_{n,\varepsilon}\}$}
We begin our proof by introducing the functions $ \psi_{n,\varepsilon} $, which formally solve \eqref{eq: equation-psi}.
\begin{lem}\label{lem: approx}
Let $d>2s$ and assume that $\rho$ satisfies \eqref{eq: ass-rho} for some $\gamma \in (0, 2s) $. Let $\{ a_{n} \}$ be a
sequence of functions converging a.e.\ to the function $a$ as in
\eqref{eq: defa} such that:
\begin{itemize}
\item[$\bullet$] for any $ n \in \mathbb{N} $ and $t>0$, $a_n(x,t)
$ is a regular function of $x$; \item[$\bullet$] for any $ n \in
\mathbb{N} $ and $x \in \mathbb{R}^d$, $a_n(x,t) $ is a piecewise
constant function of $t$ on the time intervals $(T-{(k+1)T}/{n},
T-{kT}/{n} ] $, for any $ k \in \{0, \ldots, n-1 \} $;
\item[$\bullet$] $\{ \| a_n \|_{L^\infty(\mathbb{R}^d
\times(\tau,\infty))} \} $ is uniformly bounded in $n$ for any $
\tau>0 $.
\end{itemize}
Then, for any $\varepsilon>0$ and any $ \psi \in
\mathcal{D}_{+}(\mathbb{R}^d) $, there exists a nonnegative solution
$\psi_{n,\varepsilon}$ to problem \eqref{eq: equation-psi}, in the
sense that $ \psi_{n,\varepsilon}(\cdot, t) $ is a continuous curve in
$L^p_{\rho}(\mathbb{R}^d)$ (for all $p \in (1,\infty)$)
satisfying $ \psi_{n,\varepsilon}(\cdot,0)=\psi(\cdot,0) $ and it is
absolutely continuous on $(T-{(k+1)T}/{n}, T-{kT}/{n})$ for all
$k\in \{0, \ldots, n-1 \} $, so that the identity
\begin{gather}\label{eq: equation-psi-integrale}
\psi_{n,\varepsilon}(\cdot,t_2)-\psi_{n,\varepsilon}(\cdot,t_1) = \int_{t_1}^{t_2} \rho^{-1}(\cdot) (-\Delta)^s\left[\left(a_n+\varepsilon\right)\psi_{n,\varepsilon}\right](\cdot,\tau) \, \mathrm{d}\tau \\
 \forall t_1, t_2 \in \left( T-\frac{(k+1) T}{n}, T-\frac{kT}{n} \right) , \ \forall k \in \{0, \ldots, n-1 \} \nonumber
\end{gather}
holds in $L^p_{\rho}(\mathbb{R}^d)$ for all $p \in (1,\infty)$. Moreover,
\begin{equation}\label{eq: markov-nonexp-L1}
\psi_{n,\varepsilon} \in L^\infty((0,T);L^p_{\rho}({\mathbb
R}^d)) \ \ \forall p\in[1,\infty] \quad \textrm{and} \quad
\left\| \psi_{n,\varepsilon}(t) \right\|_{1,\rho} \le \left\|
\psi \right\|_{1,\rho} \ \ \forall t \in [0,T] \, .
\end{equation}
\end{lem}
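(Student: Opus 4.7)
The plan is to build $\psi_{n,\varepsilon}$ iteratively, solving backward in time on each of the $n$ intervals $I_k:=(T-(k+1)T/n,\,T-kT/n]$ on which $a_n$ is constant in $t$. On each such interval the equation will reduce to an abstract Cauchy problem for a time-independent generator covered by Theorem \ref{thm: self-adj}, and the iteration across intervals will produce a global curve with the required continuity.

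Fix $k\in\{0,\dots,n-1\}$ and set $b(x):=a_n(x,T-kT/n)+\varepsilon$. The uniform $L^\infty$ bound on $\{a_n\}$ and the strict positivity of $\varepsilon$ give $\varepsilon\le b(x)\le M_n$ on $\mathbb{R}^d$. I then introduce the rescaled weight $\tilde\rho:=\rho/b$: since $b$ is bounded above and below by positive constants, $\tilde\rho$ satisfies \eqref{eq: ass-rho} (with different constants) for the same exponents $\gamma,\gamma_0$, so Theorem \ref{thm: self-adj} produces a self-adjoint, positive operator $\tilde A:=\tilde\rho^{-1}(-\Delta)^s$ on $L^2_{\tilde\rho}(\mathbb{R}^d)$ together with a Markov semigroup $\{\tilde S_p(s)\}$ on each $L^p_{\tilde\rho}(\mathbb{R}^d)$, analytic for $p\in(1,\infty)$. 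The substitution $\phi:=b\psi$ (well-defined because $b$ is time-independent on $I_k$) turns the equation of \eqref{eq: equation-psi} into $\phi_t=\tilde A\phi$, which in the reversed time variable $s:=T-kT/n-t$ becomes the forward Cauchy problem $\tilde\phi_s=-\tilde A\tilde\phi$ with initial datum $b\psi_0$, where $\psi_0$ denotes the terminal value assigned to $\psi_{n,\varepsilon}$ at the right endpoint of $I_k$. Setting $\psi_{n,\varepsilon}(x,t):=\tilde S_p(T-kT/n-t)(b\psi_0)(x)/b(x)$ on $I_k$ and invoking analyticity of $\tilde S_p$ for $p\in(1,\infty)$ delivers absolute continuity on $I_k$ and the integral identity \eqref{eq: equation-psi-integrale} in $L^p_\rho(\mathbb{R}^d)$, which is equivalent to $L^p_{\tilde\rho}(\mathbb{R}^d)$ by the two-sided bounds on $b$.

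The full construction starts from the terminal datum $\psi\in\mathcal D_+(\mathbb{R}^d)$ on $I_0$ and feeds $\psi_{n,\varepsilon}(\cdot,T-T/n)$ as terminal datum into the analogous step on $I_1$, iterating for $k=2,\dots,n-1$; continuity across each interface $t=T-kT/n$ is built in by matching. Nonnegativity and the bounds in \eqref{eq: markov-nonexp-L1} follow from the Markov and contraction properties of each $\tilde S_p$: positivity is propagated inductively, and for the $L^1_\rho$ estimate one uses $\rho=b\tilde\rho$ to compute, on each $I_k$,
\[
\|\psi_{n,\varepsilon}(t)\|_{1,\rho}=\|\tilde S_1(T-kT/n-t)(b\psi_0)\|_{1,\tilde\rho}\le\|b\psi_0\|_{1,\tilde\rho}=\|\psi_0\|_{1,\rho},
\]
so iterating through all $n$ intervals yields the stated global bound by $\|\psi\|_{1,\rho}$. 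The $L^\infty((0,T);L^p_\rho)$ bounds for $p\in(1,\infty]$ follow analogously from $L^p_{\tilde\rho}$-contractivity of $\tilde S_p$.

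The main technical hurdle, and the place where the $\varepsilon$-regularization enters decisively, is verifying that $\tilde\rho=\rho/b$ fits within the hypotheses of Theorem \ref{thm: self-adj} and that the semigroup solution in the transformed variable $\phi$ gives back a genuine solution of \eqref{eq: equation-psi} in the original variable $\psi_{n,\varepsilon}$; both points hinge on the uniform two-sided bound $\varepsilon\le b\le M_n$, which would fail on the set $\{a_n=0\}$ in the absence of the $\varepsilon$-shift.
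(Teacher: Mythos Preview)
Your proposal is correct and follows essentially the same approach as the paper's own proof: both construct $\psi_{n,\varepsilon}$ iteratively on the intervals $I_k$, perform the change of variable $\phi=b\psi$ with $b=a_n+\varepsilon$, apply Theorem~\ref{thm: self-adj} to the rescaled weight $\tilde\rho=\rho/b$ to obtain an analytic Markov semigroup, and then read off the $L^1_\rho$ contraction from the identity $\|\psi\|_{1,\rho}=\|b\psi\|_{1,\tilde\rho}$. Your observation that the $L^p_\rho$ bounds for $p>1$ carry constants depending on $n,\varepsilon$ (since $L^p_\rho$ and $L^p_{\tilde\rho}$ are only equivalent, not equal, for $p\neq 1$) is also exactly what the paper notes.
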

\begin{proof}
To construct $\psi_{n,\varepsilon}$ as in the statement, we first
define $\zeta_1$ as the solution of
\begin{equation}\label{eq: equation-psi-approx-1}
\begin{cases}
\rho(x) \left(\zeta_1\right)_t = (-\Delta)^s\left[\left(a_n(T)+\varepsilon\right)\zeta_1 \right] & \textrm{in } \mathbb{R}^d \times \left(T-\frac{T}{n},T\right) , \\
\zeta_1 = \psi & \textrm{on } \mathbb{R}^d \times \{ T \} \, .
\end{cases}
\end{equation}
To construct such a solution, one can for instance exploit the change of variable
\begin{equation}\label{eq: cambio-phi}
\phi_1(x,t):=\left(a_n\left(x,T\right)+\varepsilon\right)
\zeta_1(x,t)\, ,
\end{equation}
where $\phi_1 $ is the solution of
\begin{equation}\label{eq: equation-psi-approx-1-phi}
\begin{cases}
 \left( \phi_1 \right)_t = \left(a_n(T)+\varepsilon\right) \rho^{-1} (-\Delta)^s(\phi_1) & \textrm{in } \mathbb{R}^d \times \left(T-\frac{T}{n},T\right) , \\
\phi_1= \left(a_n(T) + \varepsilon \right) \psi & \textrm{on }
\mathbb{R}^d \times \{ T \} \, .
\end{cases}
\end{equation}
Problem \eqref{eq: equation-psi-approx-1-phi} is indeed solvable
by standard semigroup theory. In fact, consider the operator $A_1:=\rho_1^{-1} (-\Delta)^s$, where we have set $\rho_1(x) := \left(a_n\left(x,T\right)+\varepsilon\right)^{-1}
\rho(x)$, with domain $X_{s,\rho_1}=X_{s,\rho}$ (see Definition \ref{den: spazio-Xs}). $A_1$ is positive, self-adjoint
and generates a Markov semigroup on $L^2_{\rho_1}(\mathbb{R}^d)$. These properties follow from Theorem \ref{thm: self-adj}. Our initial datum $ \phi_1 $
belongs to
$L^p_{\rho_1}(\mathbb{R}^d) $ for all $p \in [1,\infty]$, and this
is enough in order to have a solution to \eqref{eq:
equation-psi-approx-1-phi} which is continuous up to $ t=T $ and
absolutely continuous in $\left(T-\frac{T}{n},T\right)$ in
$L^p_{\rho_1}(\mathbb{R}^d) $ for all $p \in (1,\infty)$. In fact, the semigroup
associated with $A_1$ enjoys the Markov property and therefore, as a consequence of
\cite[Theorems 1.4.1, 1.4.2]{D}, can be extended to a contraction
semigroup on $L^p_{\rho_1}(\mathbb{R}^d)$ (consistent with the
original semigroup on $L^2_{\rho_1}(\mathbb{R}^d)\cap
L^p_{\rho_1}(\mathbb{R}^d)$) for all $p \in [1,\infty]$, which is
analytic with a suitable angle $\theta_p>0$ if $p\in(1,\infty)$.
By classical results (see e.g.~\cite[Theorem 5.2 at p.\ 61]{Pazy})
the latter property ensures in particular that problem \eqref{eq:
equation-psi-approx-1-phi} is solved by a \emph{differentiable}
curve $\phi_1(\cdot,t) $ in $L^p_{\rho_1}(\mathbb{R}^d)$ for all
$p\in(1,\infty)$. Going back to the original variable $ \zeta_1 $ through
\eqref{eq: cambio-phi}, we deduce that it solves \eqref{eq:
equation-psi-approx-1} in the same sense in which $\phi_1$ solves
\eqref{eq: equation-psi-approx-1-phi}. Having at our disposal such
a $ \zeta_1 $, we can then solve the problem
\begin{equation*}
\begin{cases}
\rho(x) \left(\zeta_2 \right)_t = (-\Delta)^s\left[\left(a_n\left(T-\frac{T}{n}\right)+\varepsilon\right)\zeta_2\right] &  \textrm{in } \mathbb{R}^d \times \left(T-\frac{2T}{n},T-\frac{T}{n}\right) , \\
\zeta_2 =
\left(a_n\left(x,T\right)+\varepsilon\right)^{-1} \phi_1 &
\textrm{on } \mathbb{R}^d \times \left\{ T-\frac{T}{n}
\right\}  ,
\end{cases}
\end{equation*}
just by proceeding as above. That is, we perform the change of
variable
\begin{equation*}
\phi_2(x,t):=\left(a_n\left(x,T-\frac{T}{n}\right)+\varepsilon\right)
\zeta_2(x,t)
\end{equation*}
and take $ \phi_2 $ as the solution of
\begin{equation*}
\begin{cases}
 \left( \phi_2 \right)_t = \left(a_n(T-\frac{T}{n})+\varepsilon\right) \rho^{-1} (-\Delta)^s(\phi_2) & \textrm{in } \mathbb{R}^d \times \left(T-\frac{2T}{n},T-\frac{T}{n}\right) , \\
\phi_2= \left(a_n(T-\frac{T}{n}) + \varepsilon \right) \zeta_1 =
\frac{\left(a_n(T-\frac{T}{n}) + \varepsilon \right)}{\left(a_n(T)
+ \varepsilon \right)} \phi_1 & \textrm{on } \mathbb{R}^d
\times \left\{ T-\frac{T}{n} \right\} .
\end{cases}
\end{equation*}
It is clear how the procedure goes on and allows us to obtain a
solution $ \psi_{n,\varepsilon} $ to \eqref{eq: equation-psi} in
the sense of the statement, just by defining it as
\begin{equation*}
\psi_{n,\varepsilon}(\cdot,t) := \zeta_{k+1}(\cdot,t) \ \ \ \forall t \in
\left( T-\frac{(k+1) T}{n}, T-\frac{kT}{n} \right] , \ \ \forall k
\in \{0, \ldots, n-1 \} \, .
\end{equation*}
Finally, since
\begin{equation*}
 \rho_{k+1}^{-1} (-\Delta)^s
\end{equation*}
generates a contraction semigroup on
$L^p_{\rho_{k+1}}(\mathbb{R}^d) $ for all $p\in[1,\infty]$, where
\begin{equation} \label{eq: weight-rho-markov-k}
\rho_{k+1}(x):=
\left(a_n\left(x,T-\frac{kT}{n}\right)+\varepsilon\right)^{-1}
\rho(x) \, ,
\end{equation}
the inequalities
\begin{gather}\label{eq: markov-contract}
\left\| \phi_{k+1}(t) \right\|_{p,\rho_{k+1}} \le \left\| \frac{\left(a_n(T-\frac{kT}{n}) + \varepsilon \right)}{\left(a_n(T-\frac{(k-1)T}{n}) + \varepsilon \right)} \phi_{k}\left(T-\frac{kT}{n}\right) \right\|_{p,\rho_{k+1}}  \\
\forall t \in \left( T-\frac{(k+1) T}{n}, T-\frac{kT}{n} \right] ,
\ \forall p \in [1,\infty]  \nonumber
\end{gather}
hold for any $ k\in \{0, \ldots, n-1 \} $ (on the r.h.s.\ of
\eqref{eq: markov-contract} for $ k=0 $ we conventionally set $
\phi_0=\psi $ and $ a_n(T+{T}/{n}) + \varepsilon =1 $). Going back
to the variables $ \zeta_{k+1} $ and recalling \eqref{eq:
weight-rho-markov-k}, from \eqref{eq: markov-contract} one deduces
\eqref{eq: markov-nonexp-L1}: in fact, for $p=1 $ it is easy to
see that the terms containing $a_n$ cancel out and give the corresponding
inequality, while for $p>1$ such
terms remain and one obtains an inequality of the type of $\| \psi_{n,\varepsilon}(t) \|_{p,\rho} \le C(n,\varepsilon) \|
\psi \|_{p,\rho} $, where $C(n,\varepsilon)$ is a
positive constant depending on $n,\varepsilon$.
\end{proof}

\begin{lem}\label{lem: lemma-prima-eq}
Let $d>2s$ and assume that $\rho$ satisfies \eqref{eq: ass-rho} for some $\gamma \in (0,2s) \cap (0,d-2s] $. Let $g$ be as
in \eqref{eq: definition-g}, $a$ as in \eqref{eq: defa} and $a_n$,
$\psi_{n,\varepsilon}$, $ \psi$ as in Lemma \ref{lem: approx}.
Then the identity
\begin{equation}\label{eq: equation-psi-g-derivata-parti-tempo}
\begin{aligned}
& \int_{\mathbb{R}^d} g(x,T) \psi(x) \, \rho(x)\mathrm{d}x - \int_{\mathbb{R}^d} g(x,t) \psi_{n,\varepsilon}(x,t) \, \rho(x)\mathrm{d}x  \\
 = & \int_t^T \int_{\mathbb{R}^d} \left(a_n(x,\tau)+\varepsilon -a(x,\tau)\right)(-\Delta)^s(g)(x,\tau) \, \psi_{n,\varepsilon}(x,\tau) \, \mathrm{d}x \mathrm{d}\tau
\end{aligned}
\end{equation}
holds for all $t \in (0,T]$.
\end{lem}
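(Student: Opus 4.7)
The identity is a Green/duality formula. By (5.2), $g$ solves the linear equation $\rho g_\tau=-a\,(-\Delta)^s(g)$ a.e.\ on $\mathbb{R}^d\times(0,T)$, while by construction $\psi_{n,\varepsilon}$ solves the adjoint equation $\rho(\psi_{n,\varepsilon})_\tau=(-\Delta)^s[(a_n+\varepsilon)\psi_{n,\varepsilon}]$ on each subinterval $I_k:=(T-(k+1)T/n,T-kT/n)$ on which $a_n$ is time-independent. The plan is to differentiate in $\tau$ the scalar quantity $\int_{\mathbb{R}^d}g\,\psi_{n,\varepsilon}\,\rho\,dx$ on each $I_k$, integrate by parts in space via self-adjointness of $(-\Delta)^s$, and then sum telescopically in $k$.

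\textbf{Main steps.} First, by Lemma \ref{lem: monotonicity} and the explicit identities $\rho g_\tau=\rho(u_1^m-u_2^m(\cdot,\cdot+h))$ and $(-\Delta)^s(g)=\rho(u_2(\cdot,\cdot+h)-u_1)$ (the latter from $(-\Delta)^s U^\nu=\nu$, whose validity is ensured by the standing assumption $\gamma\le d-2s$), both $g(\cdot,\tau)$ and $g_\tau(\cdot,\tau)$ are controlled in $L^\infty\cap L^1_\rho(\mathbb{R}^d)$ uniformly away from $\tau=0$. On the other hand, Lemma \ref{lem: approx} gives that $\psi_{n,\varepsilon}(\cdot,\tau)$ is an absolutely continuous curve in $L^p_\rho(\mathbb{R}^d)$ for all $p\in(1,\infty)$ on $I_k$, satisfying \eqref{eq: equation-psi-integrale} pointwise in time there. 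Then on each $I_k$ I would compute
$$\frac{d}{d\tau}\int_{\mathbb{R}^d} g\,\psi_{n,\varepsilon}\,\rho\,dx = -\int_{\mathbb{R}^d}a\,(-\Delta)^s(g)\,\psi_{n,\varepsilon}\,dx+\int_{\mathbb{R}^d} g\,(-\Delta)^s\!\bigl[(a_n+\varepsilon)\psi_{n,\varepsilon}\bigr]dx,$$
and apply identity \eqref{eq: id-parti-nonlocal-Hs-Hs} to the second term to swap the operator, producing on the right-hand side $\int(a_n+\varepsilon-a)(-\Delta)^s(g)\,\psi_{n,\varepsilon}\,dx$. Integrating in $\tau$ across each $I_k\cap(t,T)$ and summing over the finitely many relevant $k$, the continuity of $\tau\mapsto\psi_{n,\varepsilon}(\cdot,\tau)$ in $L^p_\rho$ at the break points $T-kT/n$ (built into the construction) makes the boundary terms telescope, and substituting the terminal value $\psi_{n,\varepsilon}(\cdot,T)=\psi$ yields \eqref{eq: equation-psi-g-derivata-parti-tempo}.

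\textbf{Main obstacle.} The delicate step is justifying the nonlocal integration by parts
$$\int_{\mathbb{R}^d} g\,(-\Delta)^s\!\bigl[(a_n+\varepsilon)\psi_{n,\varepsilon}\bigr]dx = \int_{\mathbb{R}^d}(-\Delta)^s(g)\,(a_n+\varepsilon)\,\psi_{n,\varepsilon}\,dx.$$
The function $\phi_{k+1}:=(a_n+\varepsilon)\psi_{n,\varepsilon}$ lies in the domain $X_{s,\rho}$ of the operator $A_1$ from the proof of Lemma \ref{lem: approx}, so $(-\Delta)^s\phi_{k+1}$ is a genuine $L^2_{\rho^{-1}}$ function a.e.\ on $I_k$. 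However, $g(\cdot,\tau)$ is a difference of Riesz potentials and need not itself belong to $\dot{H}^s(\mathbb R^d)$, so \eqref{eq: id-parti-nonlocal-Hs-Hs} cannot be invoked directly. The remedy is to truncate against a cut-off $\xi_R g$ of the type in Lemma \ref{lem:decay-lap-cutoff}, for which $\xi_R g\in \dot H^s\cap L^2(\mathbb R^d)$ and the identity is valid, and then pass to the limit $R\to\infty$ exploiting the explicit pointwise formula for $(-\Delta)^s(g)$, its $L^\infty\cap L^1_\rho$-bound on $I_k$, and the $L^1_\rho\cap L^\infty_\rho$-bound on $\psi_{n,\varepsilon}$ from \eqref{eq: markov-nonexp-L1}, together with the usual commutator estimates for $(-\Delta)^s$ applied to a product. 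Once this approximation is carried out, the telescoping argument above delivers \eqref{eq: equation-psi-g-derivata-parti-tempo}.
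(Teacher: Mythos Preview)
Your overall strategy---differentiate $\tau\mapsto\int_{\mathbb{R}^d}g\,\psi_{n,\varepsilon}\,\rho\,\mathrm{d}x$ on each $I_k$, swap the operator $(-\Delta)^s$, and telescope---is exactly the paper's. The difference, and the place where your sketch has a real gap, is the justification of the nonlocal integration by parts.

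First, a factual slip: you claim $g(\cdot,\tau)\in L^1_\rho(\mathbb{R}^d)$. This is false. Since $g$ is a difference of Riesz potentials of finite measures, it decays at best like $|x|^{-(d-2s)}$, so $\int_{|x|>1}|g|^p\rho\,\mathrm{d}x$ converges only for $p>(d-\gamma)/(d-2s)$, and this threshold is strictly larger than $1$ precisely because $\gamma<2s$. The paper identifies exactly this range (see \eqref{eq: interv-p-U}) via Lemma~\ref{lem: potential-basic-prop-1}. Similarly, your claim that $(-\Delta)^s(g)=\rho(u_2(\cdot+h)-u_1)$ lies in $L^\infty$ is wrong whenever $\gamma_0>0$, since $\rho$ blows up at the origin; what is true is that $\rho^{-1}(-\Delta)^s(g)\in L^p_\rho$ for all $p\in[1,\infty]$.

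Second, the cutoff route you propose is not self-contained: to invoke \eqref{eq: id-parti-nonlocal-Hs-Hs} on $\xi_R g$ you must first know $\xi_R g\in\dot H^s(\mathbb{R}^d)$, and you have not established this---continuity and boundedness of $g$ are not enough. The paper avoids this by appealing to Proposition~\ref{pro: laplaciano-Lp}, which packages the integration-by-parts identity for pairs $(v,w)$ with $v,w\in L^p_\rho$ and $A(v),A(w)\in L^{p'}_\rho$ for some $p\in[2,2(d-\gamma)/(d-2s))$, and in particular yields $v,w\in\dot H^s$ as part of its conclusion. The paper then checks that the interval $((d-\gamma)/(d-2s),\infty)\cap[2,2(d-\gamma)/(d-2s))$ is nonempty, so that $g$ and $(a_n+\varepsilon)\psi_{n,\varepsilon}$ fall simultaneously into the admissible range. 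Proposition~\ref{pro: laplaciano-Lp} is itself proved by a cutoff argument of the kind you gesture at, but with the correct $L^p_\rho$ bookkeeping replacing your (incorrect) $L^1_\rho\cap L^\infty$ claims; getting the remainder terms to vanish as $R\to\infty$ is exactly where the restriction $p<2(d-\gamma)/(d-2s)$ enters. So your plan is salvageable, but only after you replace the $L^1_\rho$ claims by the correct $L^p_\rho$ integrability of $g$ and carry out the cutoff analysis with that exponent---at which point you will have reproduced Proposition~\ref{pro: laplaciano-Lp} inline.
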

\begin{proof}
To begin with, let us set
$t_k :=  T(n-k)/n$ for all $k \in \{0, \ldots, n \}$.
Recall that, from Lemma \ref{lem: approx}, $ \psi_{n,\varepsilon}(\cdot, t) $ is a continuous curve in $
L^p_{\rho}(\mathbb{R}^d) $ on $(0,T]$, absolutely continuous on
any interval $ (t_{k+1},t_{k}) $ for $ k \in \{0, \ldots, n-1 \} $
and satisfying the differential equation in \eqref{eq:
equation-psi} on such intervals, for all $p \in (1,\infty)$.
Moreover, $ g(\cdot, t) $ is an absolutely continuous curve in $
L^p_{\rho}(\mathbb{R}^d) $ on $(0,T]$ for all $p$ such that
\begin{equation}\label{eq: interv-p-U}
p \in \left(\frac{d-\gamma}{d-2s} ,\infty \right) .
\end{equation}
Since $g(x,t)$ is a continuous function of $x$ (recall Lemma \ref{lem: potential-basic-prop-1}) and the weight $ \rho(x) $ is locally integrable, in order to prove that $g(\cdot,t)\in L^p_{\rho}(\mathbb{R}^d)$ for all $p$ as in \eqref{eq: interv-p-U} it suffices to show that $g(\cdot,t)\in L^p_{\rho}(B_1^c)$. To this end, still Lemma \ref{lem: potential-basic-prop-1} ensures that $ g(\cdot,t) \in L^p(\mathbb{R}^d) $ for all $p$ satisfying \eqref{eq: potential-v-Lp}: the latter property and H\"{o}lder's inequality imply that $g(\cdot,t)\in L^p_{\rho}(B_1^c)$ for all $p$ as in \eqref{eq: interv-p-U}.

The fact that $ g(\cdot,t) $ is also absolutely continuous in $
L^p_{\rho}(\mathbb{R}^d) $ on the time interval
$(0,T]$ is a consequence of \eqref{eq: equation-g} and of the
integrability properties of $u_1,u_2$. Hence, due to Lemma
\ref{lem: approx}, we get that
\begin{equation}\label{eq: scal-prod-psi-gi}
t \mapsto \int_{\mathbb{R}^d} g(x,t) \psi_{n,\varepsilon}(x,t) \,
\rho(x) \mathrm{d}x
\end{equation}
is a continuous function on $(0,T]$, absolutely continuous on each interval $ (t_{k+1},t_k) $ and satisfies
\begin{equation}\label{eq: equation-psi-g-derivata}
\begin{aligned}
& \frac{\mathrm{d}}{\mathrm{d}t} \int_{\mathbb{R}^d} g(x,t) \psi_{n,\varepsilon}(x,t) \, \rho(x) \mathrm{d}x \\
 = & \int_{\mathbb{R}^d} \left\{ -a(x,t)(-\Delta)^s(g)(x,t) \, \psi_{n,\varepsilon}(x,t) + g(x,t) \, (-\Delta)^s\left[\left(a_n+\varepsilon\right)\psi_{n,\varepsilon}\right](x,t) \right\} \mathrm{d}x
\end{aligned}
\end{equation}
there. As we have just seen, $ g(\cdot,t) \in
L^p_{\rho}(\mathbb{R}^d) $ for all $p$ satisfying
\eqref{eq: interv-p-U} and $ \rho^{-1}(\cdot)(-\Delta)^s(g)(\cdot,t) \in
L^p_{\rho}(\mathbb{R}^d) $ for all $p \in [1,\infty] $.
Moreover, as a consequence of Lemma \ref{lem: approx}, we have
that $ (a_n(\cdot,t)+\varepsilon)\psi_{n,\varepsilon}(\cdot,t) \in
L^p_{\rho}(\mathbb{R}^d) $ for all $p \in [1,\infty] $ and
$\rho^{-1}(\cdot)(-\Delta)^s[(a_n(\cdot,t)+\varepsilon)\psi_{n,\varepsilon}(\cdot,t)]
\in L^p_{\rho}(\mathbb{R}^d) $ for all $p \in (1,\infty)$. We
are therefore in position to apply Proposition \ref{pro:
laplaciano-Lp} to the r.h.s.\ of \eqref{eq:
equation-psi-g-derivata} (note that the interval $((d-\gamma)/(d-2s),\infty)
\cap [2,2(d-\gamma)/(d-2s))$ is not empty) to get that
\begin{equation}\label{eq: equation-psi-g-derivata-parti}
\frac{\mathrm{d}}{\mathrm{d}t} \int_{\mathbb{R}^d} g(x,t)
\psi_{n,\varepsilon}(x,t) \, \rho(x) \mathrm{d}x =
\int_{\mathbb{R}^d} \left(a_n(x,t)+\varepsilon
-a(x,t)\right)(-\Delta)^s(g)(x,t) \, \psi_{n,\varepsilon}(x,t) \, \mathrm{d}x \, .
\end{equation}
But the r.h.s.\ of \eqref{eq: equation-psi-g-derivata-parti} is in $
L^1((\tau,T)) $ for any $ \tau\in(0,T) $, from which \eqref{eq:
scal-prod-psi-gi} is absolutely continuous on the whole of $ (0,T]
$ and not only on $(t_{k+1},t_{k})$. Integrating \eqref{eq: equation-psi-g-derivata-parti} between $ t
$ and $T$ then yields \eqref{eq: equation-psi-g-derivata-parti-tempo}.
\end{proof}
Now we prove a key ``conservation of mass'' property for $ \psi_{n,\varepsilon} $.
\begin{lem}\label{lem: lem-uniq-pre}
Let $d>2s$ and assume that $\rho$ satisfies \eqref{eq: ass-rho} for some $ \gamma \in (0,2s) $. Let $\psi_{n,\varepsilon} $
and $ \psi$ be as in Lemma \ref{lem: approx}. Then the
$L^1_{\rho}$ norm of $\psi_{n,\varepsilon}(\cdot, t)$ is preserved,
that is
\begin{equation}\label{eq: 15pierre-4}
\int_{\mathbb{R}^d} \psi_{n,\varepsilon}(x,t) \, \rho(x)
\mathrm{d}x = \int_{\mathbb{R}^d} \psi(x) \, \rho(x)
\mathrm{d}x  \ \ \ \forall t \in (0,T] \, .
\end{equation}
\end{lem}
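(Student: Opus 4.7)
The plan is to mimic the conservation-of-mass argument of Proposition \ref{oss: cons-mass} by pairing the integral form of the equation solved by $\psi_{n,\varepsilon}$ with the standard cutoff $\xi_R$ from Lemma \ref{lem:decay-lap-cutoff} and then letting $R\to\infty$. Combined with the one-sided inequality $\|\psi_{n,\varepsilon}(t)\|_{1,\rho}\le\|\psi\|_{1,\rho}$ already contained in \eqref{eq: markov-nonexp-L1}, such a test will yield both directions of the identity.

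Fix $t\in(0,T]$ and set $t_k:=T(n-k)/n$, so that $[t,T]$ is covered by finitely many of the intervals $(t_{k+1},t_k)$ on which Lemma \ref{lem: approx} provides the absolutely continuous representation \eqref{eq: equation-psi-integrale}. On each such interval I pair that identity with $\xi_R\in C^\infty_c({\mathbb R}^d)$ and apply Fubini together with the duality/symmetry of $(-\Delta)^s$ supplied by Proposition \ref{pro: laplaciano-Lp}. The proposition is applicable because $(a_n+\varepsilon)\psi_{n,\varepsilon}(\cdot,\tau)\in L^p_\rho({\mathbb R}^d)$ for every $p\in[1,\infty]$ by Lemma \ref{lem: approx}, while $\xi_R$ is smooth and compactly supported; this gives, on each slab,
\[
\int_{{\mathbb R}^d}\!\psi_{n,\varepsilon}(x,t_k)\xi_R(x)\rho(x)\,{\rm d}x - \int_{{\mathbb R}^d}\!\psi_{n,\varepsilon}(x,t_{k+1})\xi_R(x)\rho(x)\,{\rm d}x = \int_{t_{k+1}}^{t_k}\!\!\int_{{\mathbb R}^d}(a_n+\varepsilon)\psi_{n,\varepsilon}\,(-\Delta)^s(\xi_R)\,{\rm d}x\,{\rm d}\tau.
\]
Telescoping these identities across the partition (exploiting the continuity of $\tau\mapsto\psi_{n,\varepsilon}(\cdot,\tau)$ at each $t_k$) and using $\psi_{n,\varepsilon}(\cdot,T)=\psi$ yields
\[
\int_{{\mathbb R}^d}\psi(x)\xi_R(x)\rho(x)\,{\rm d}x - \int_{{\mathbb R}^d}\psi_{n,\varepsilon}(x,t)\xi_R(x)\rho(x)\,{\rm d}x = \int_{t}^{T}\!\!\int_{{\mathbb R}^d}(a_n+\varepsilon)\psi_{n,\varepsilon}\,(-\Delta)^s(\xi_R)\,{\rm d}x\,{\rm d}\tau.
\]

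To close the argument I estimate the right-hand side uniformly in $R$: writing $(-\Delta)^s(\xi_R)=\rho\cdot\rho^{-1}(-\Delta)^s(\xi_R)$, using $\rho^{-1}(x)\le c^{-1}(1+|x|^\gamma)$ from \eqref{eq: ass-rho}, and invoking \eqref{eq: markov-nonexp-L1} together with the uniform boundedness of $\{a_n\}$ on ${\mathbb R}^d\times[t,T]$, I obtain
\[
\left|\int_{t}^{T}\!\!\int_{{\mathbb R}^d}(a_n+\varepsilon)\psi_{n,\varepsilon}\,(-\Delta)^s(\xi_R)\,{\rm d}x\,{\rm d}\tau\right| \le c^{-1}(T-t)\left\|a_n+\varepsilon\right\|_{L^\infty({\mathbb R}^d\times[t,T])}\left\|\psi\right\|_{1,\rho}\left\|(1+|x|^\gamma)(-\Delta)^s(\xi_R)\right\|_\infty.
\]
By the scaling relations in Lemma \ref{lem:decay-lap-cutoff} and the assumption $\gamma<2s$, $\|(1+|x|^\gamma)(-\Delta)^s(\xi_R)\|_\infty\to 0$ as $R\to\infty$. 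Letting $R\to\infty$ and using $\xi_R\nearrow1$ with monotone convergence on the left-hand side gives \eqref{eq: 15pierre-4}.

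The main obstacle is the rigorous integration-by-parts step on each time slab: given only the semigroup-based regularity of $\psi_{n,\varepsilon}$ produced in Lemma \ref{lem: approx}, one must justify the transfer of $(-\Delta)^s$ from $(a_n+\varepsilon)\psi_{n,\varepsilon}$ onto $\xi_R$, which is exactly the content of Proposition \ref{pro: laplaciano-Lp}. Once this functional-analytic point is secured, the rest is a direct adaptation of the conservation-of-mass cutoff estimate already used in Proposition \ref{oss: cons-mass}.
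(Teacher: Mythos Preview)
Your proof is correct and follows essentially the same route as the paper: test \eqref{eq: equation-psi-integrale} against the cutoff $\xi_R$, move $(-\Delta)^s$ onto $\xi_R$, control the remainder via \eqref{eq: markov-nonexp-L1} and the decay of $\rho^{-1}(-\Delta)^s(\xi_R)$ (which requires $\gamma<2s$), then let $R\to\infty$. The only cosmetic differences are that the paper first derives the estimate on a single slab and then extends it to all of $(0,T]$ by continuity of $t\mapsto\psi_{n,\varepsilon}(\cdot,t)$ in $L^2_\rho$ rather than telescoping explicitly, and that since $\xi_R\in\mathcal{D}(\mathbb{R}^d)$ the integration by parts is immediate from the distributional definition of $(-\Delta)^s$, so invoking Proposition \ref{pro: laplaciano-Lp} is not strictly needed here.
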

\begin{proof}
Multiplying \eqref{eq: equation-psi-integrale} by any $\varphi \in
\mathcal{D}(\mathbb{R}^d) $ and integrating in $ \mathbb{R}^d $,
we obtain:
\begin{equation}\label{eq: 15pierre-1}
\begin{aligned}
& \int_{\mathbb{R}^d} \psi_{n,\varepsilon}(x,t^\ast) \varphi(x) \, \rho(x) \mathrm{d}x - \int_{\mathbb{R}^d} \psi_{n,\varepsilon}(x,t_\ast)  \varphi(x) \, \rho(x) \mathrm{d}x \\
= & \int_{\mathbb{R}^d} (-\Delta)^s(\varphi)(x) \left(
\int_{t_\ast}^{t^\ast}
\left(a_n(x,\tau)+\varepsilon\right)\psi_{n,\varepsilon}(x,\tau)
\, \mathrm{d}\tau \right) \mathrm{d}x
\end{aligned}
\end{equation}
for all $t_\ast,t^\ast \in (t_{k+1},t_k) $. Since the $L^1_{\rho}$
norm of $\psi_{n,\varepsilon}(\cdot,t)$ is bounded by the
$L^1_{\rho}$ norm of the final datum $\psi$ (recall \eqref{eq:
markov-nonexp-L1}), from \eqref{eq: 15pierre-1} we get:
\begin{equation}\label{eq: 15pierre-2}
\begin{aligned}
\left| \int_{\mathbb{R}^d} \psi_{n,\varepsilon}(x,t^\ast)
\varphi(x) \, \rho(x) \mathrm{d}x - \int_{\mathbb{R}^d}
\psi_{n,\varepsilon}(x,t_\ast) \varphi(x) \, \rho(x)
\mathrm{d}x \right| \le \underline{C} \, |t^\ast-t_\ast| \left\| \psi
\right\|_{1,\rho} \left\| \rho^{-1} (-\Delta)^s(\varphi)
\right\|_\infty ,
\end{aligned}
\end{equation}
where $ \underline{C} :=\| a_n+\varepsilon \|_{L^\infty(\mathbb{R}^d \times (t_\ast \wedge t^\ast,T))} $ is a positive constant independent of $n$ and $\varepsilon$. Replacing $ \varphi $ with the cut-off function $ \xi_R $ (defined in Lemma \ref{lem:decay-lap-cutoff}) and estimating the r.h.s.\ of \eqref{eq: 15pierre-2} as in the proof of Proposition \ref{oss: cons-mass} yields
\begin{equation}\label{eq: 15pierre-3}
\begin{aligned}
& \left| \int_{\mathbb{R}^d} \psi_{n,\varepsilon}(x,t^\ast) \xi_R(x)
\rho(x) \mathrm{d}x - \int_{\mathbb{R}^d}
\psi_{n,\varepsilon}(x,t_\ast) \xi_R(x) \rho(x) \mathrm{d}x
\right| \\
\le &  \underline{C} \, |t^\ast-t_\ast| \left\| \psi \right\|_{1,\rho}
c^{-1} \left( R^{-2s} + R^{-2s+\gamma} \right) \left\| (1+|x|^\gamma) (-\Delta)^{s}(\xi) \right\|_\infty
\end{aligned}
\end{equation}
for all $ R>0 $ and $ t_\ast, t^\ast \in (t_{k+1},t_k) $,  $c$ being as in \eqref{eq: ass-rho}. Recalling that
$ \psi_{n,\varepsilon}(\cdot, t) $ is a continuous curve (for instance in
$ L^2_{\rho}(\mathbb{R}^d) $) on $(0,T]$, we can extend the
validity of \eqref{eq: 15pierre-3} (and \eqref{eq: 15pierre-2}) to
any $ t_\ast,t^\ast  \in (0,T] $. By choosing $ t^\ast=T $ and letting $ R
\to \infty $ in \eqref{eq: 15pierre-3} we finally get \eqref{eq:
15pierre-4}.
\end{proof}
In the next lemma we introduce the Riesz potential of
$ \rho(\cdot) \psi_{n,\varepsilon}(\cdot,t)$, which will play a fundamental role below.
\begin{lem}
Let $d>2s$ and assume that $\rho$ satisfies \eqref{eq: ass-rho} for some $\gamma \in (0,2s) \cap (0,d-2s] $. Let $a_n$,
$\psi_{n,\varepsilon}$ and $ \psi$ be as in Lemma \ref{lem: approx}. We denote as $H_{n,\varepsilon}(\cdot, t)$ the Riesz potential
of $ \rho(\cdot) \psi_{n,\varepsilon}(\cdot, t)$, that is
\begin{equation*}
H_{n,\varepsilon}(x,t) := [I_{2s} \ast \left(\rho(\cdot) \psi_{n,\varepsilon}(\cdot, t)\right)](x) \ \ \ \forall (x,t) \in \mathbb{R}^d \times (0,T] \, .
\end{equation*}
Then $H_{n,\varepsilon}(\cdot, t) \in \dot{H}^s(\mathbb{R}^d)$ and the identity
\begin{equation}\label{eq: per-parti-H-dt-parti}
\left\| I_{2s} \ast ( \rho \psi) \right\|_{\dot{H}^s}^2 = \left\|  H_{n,\varepsilon}(t) \right\|_{\dot{H}^s}^2 + 2 \int_t^T \! \int_{\mathbb{R}^d} \left(a_n(x,\tau)+\varepsilon\right)\psi^2_{n,\varepsilon}(x,\tau) \, \rho(x) \mathrm{d}x \mathrm{d}\tau
\end{equation}
holds for all $t\in (0,T] $.
\end{lem}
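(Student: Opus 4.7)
The plan is to differentiate $\|H_{n,\varepsilon}(t)\|_{\dot{H}^s}^2$ in time and use both the evolution equation satisfied by $\psi_{n,\varepsilon}$ and the formal identity $(-\Delta)^s H_{n,\varepsilon}(\cdot,t) = \rho(\cdot)\psi_{n,\varepsilon}(\cdot,t)$. Via \eqref{eq: id-parti-nonlocal-Hs-Hs}, the latter translates into the key pairing
\[
\|H_{n,\varepsilon}(t)\|_{\dot{H}^s}^2 = \int_{\mathbb{R}^d} H_{n,\varepsilon}(x,t)\,\rho(x)\,\psi_{n,\varepsilon}(x,t)\,\mathrm{d}x \, ,
\]
which is the bridge between the $\dot{H}^s$ norm and the linear dual equation.

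First I would show $H_{n,\varepsilon}(\cdot,t)\in\dot{H}^s(\mathbb{R}^d)$. Since $\psi_{n,\varepsilon}(\cdot,t)\in L^p_\rho(\mathbb{R}^d)$ for every $p\in[1,\infty]$ by Lemma \ref{lem: approx}, and since $\rho(x)\lesssim |x|^{-\gamma}$ at infinity with $\gamma\le d-2s$, the function $\rho\,\psi_{n,\varepsilon}(\cdot,t)$ belongs to $L^{2d/(d+2s)}(\mathbb{R}^d)$; by Hardy--Littlewood--Sobolev (applied as in Lemma \ref{lem: potential-basic-prop-1}), its Riesz potential lies in $\dot{H}^s(\mathbb{R}^d)$. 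Moreover, on each interval $(t_{k+1},t_k)$ with $t_k := T(n-k)/n$ the function $a_n(\cdot,\tau)$ is independent of $\tau$, and $\psi_{n,\varepsilon}$ is absolutely continuous in $L^p_\rho(\mathbb{R}^d)$ and satisfies
\[
\rho(x)\,(\psi_{n,\varepsilon})_t(x,t) = (-\Delta)^s\bigl[(a_n+\varepsilon)\psi_{n,\varepsilon}\bigr](x,t) \, .
\]
Convolving with $I_{2s}$ and using $(-\Delta)^{-s} (-\Delta)^s = \mathrm{Id}$ on the relevant class (which is legitimate by the integrability just established) yields
\[
(H_{n,\varepsilon})_t(x,t) = (a_n(x,t)+\varepsilon)\,\psi_{n,\varepsilon}(x,t)\, .
\]

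Next, on each subinterval I would differentiate the pairing, obtaining, via \eqref{eq: id-parti-nonlocal-Hs-Hs} with $v=H_{n,\varepsilon}(t)$ and $w=(H_{n,\varepsilon})_t(t)$,
\[
\frac{\mathrm{d}}{\mathrm{d}t}\|H_{n,\varepsilon}(t)\|_{\dot{H}^s}^2 = 2\!\int_{\mathbb{R}^d} (H_{n,\varepsilon})_t(x,t)\,(-\Delta)^s H_{n,\varepsilon}(x,t)\,\mathrm{d}x = 2\!\int_{\mathbb{R}^d} (a_n+\varepsilon)\,\psi_{n,\varepsilon}^2\,\rho\,\mathrm{d}x\, .
\]
Integrating on each subinterval, using the continuity of $\psi_{n,\varepsilon}(\cdot,t)$ (and hence of $H_{n,\varepsilon}(\cdot,t)$ in $\dot{H}^s$) at the nodes $t_k$ granted by Lemma \ref{lem: approx}, and summing over $k$ from $t$ up to $T$, we get
\[
\|H_{n,\varepsilon}(T)\|_{\dot{H}^s}^2 - \|H_{n,\varepsilon}(t)\|_{\dot{H}^s}^2 = 2\int_t^T \!\int_{\mathbb{R}^d} (a_n+\varepsilon)\,\psi_{n,\varepsilon}^2\,\rho\,\mathrm{d}x\,\mathrm{d}\tau\, .
\]
Since $\psi_{n,\varepsilon}(\cdot,T)=\psi$ gives $H_{n,\varepsilon}(\cdot,T)=I_{2s}\ast(\rho\psi)$, the identity \eqref{eq: per-parti-H-dt-parti} follows.

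The main obstacle is the rigorous justification of the pointwise time derivative of $H_{n,\varepsilon}$ and of the integration by parts in the $\dot{H}^s$ pairing: one has to check that the duality $\int H_{n,\varepsilon}\,(-\Delta)^s[(a_n+\varepsilon)\psi_{n,\varepsilon}]\,\mathrm{d}x = \int (-\Delta)^s H_{n,\varepsilon}\,(a_n+\varepsilon)\psi_{n,\varepsilon}\,\mathrm{d}x$ is legitimate, which is where the constraint $\gamma\le d-2s$ is crucial to guarantee the required integrability of $H_{n,\varepsilon}$ at infinity (so that Proposition \ref{pro: laplaciano-Lp} or an analogous symmetry of $(-\Delta)^s$ on dual $L^p$-spaces applies). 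A clean alternative, should the pointwise differentiation look delicate, is to test the integral form \eqref{eq: equation-psi-integrale} against $H_{n,\varepsilon}$ on each subinterval and invoke Fubini and \eqref{eq: id-parti-nonlocal-Hs-Hs} to get the identity directly, bypassing the need to compute $(H_{n,\varepsilon})_t$ in a strong sense.
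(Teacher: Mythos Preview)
Your proposal is correct and follows essentially the same line as the paper's proof: establish $H_{n,\varepsilon}(\cdot,t)\in\dot H^s$, derive the pointwise-in-time equation $(H_{n,\varepsilon})_t=(a_n+\varepsilon)\psi_{n,\varepsilon}$ by applying $I_{2s}\ast$ to \eqref{eq: equation-psi-integrale}, differentiate the pairing $\|H_{n,\varepsilon}(t)\|_{\dot H^s}^2=\int H_{n,\varepsilon}\,\psi_{n,\varepsilon}\,\rho\,\mathrm{d}x$ on each subinterval $(t_{k+1},t_k)$, and integrate from $t$ to $T$. The only differences are cosmetic: the paper obtains $H_{n,\varepsilon}\in\dot H^s$ directly from Proposition~\ref{pro: laplaciano-Lp} (using that $H_{n,\varepsilon}\in L^p_\rho$ for $p$ as in \eqref{eq: interv-p-U} and $\rho^{-1}(-\Delta)^s H_{n,\varepsilon}=\psi_{n,\varepsilon}\in L^{p'}_\rho$) rather than via Hardy--Littlewood--Sobolev, and for the gluing at the nodes $t_k$ it argues with continuity of the \emph{scalar} function $t\mapsto\|H_{n,\varepsilon}(t)\|_{\dot H^s}^2$ (read off from the pairing identity \eqref{eq: per-parti-H} and the $L^p_\rho$-continuity of $\psi_{n,\varepsilon}$) together with $L^1((\tau,T))$-integrability of the derivative, rather than asserting continuity of $H_{n,\varepsilon}$ in $\dot H^s$ itself. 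You correctly identify that the key technical point is the symmetry $\int H_{n,\varepsilon}\,(-\Delta)^s[(a_n+\varepsilon)\psi_{n,\varepsilon}]\,\mathrm{d}x=\int (-\Delta)^s H_{n,\varepsilon}\,(a_n+\varepsilon)\psi_{n,\varepsilon}\,\mathrm{d}x$, which is exactly where Proposition~\ref{pro: laplaciano-Lp} (and hence the constraint $\gamma\le d-2s$ ensuring the relevant exponent window is nonempty) enters in the paper as well.
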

\begin{proof}
First notice that $ \rho^{-1}(\cdot) (-\Delta)^s(H_{n,\varepsilon})(\cdot,t) =
\psi_{n,\varepsilon}(\cdot,t) \in L^p_{\rho}({\mathbb R}^d)$ for all
$p\in[1,\infty]$ (recall \eqref{eq: markov-nonexp-L1}) and
$H_{n,\varepsilon}(\cdot,t) \in L^p_{\rho}({\mathbb R}^d) $ for all
$p$ satisfying \eqref{eq: interv-p-U} (this can be proved by
exploiting Lemma \ref{lem: potential-basic-prop-1} exactly as in
the proof of Lemma \ref{lem: lemma-prima-eq}). Again, since the
interval $((d-\gamma)/(d-2s),\infty) \cap [2,2(d-\gamma)/(d-2s))$
is not empty, applying Proposition \ref{pro: laplaciano-Lp}
we get that $H_{n,\varepsilon}(\cdot,t) \in \dot{H}^s(\mathbb{R}^d)$ and
the identity
\begin{equation}\label{eq: per-parti-H}
\left\|  H_{n,\varepsilon}(t)\right\|_{\dot{H}^s}^2 =
\int_{\mathbb{R}^d} H_{n,\varepsilon}(x,t) \,
(-\Delta)^s\left(H_{n,\varepsilon}\right)(x,t) \, \mathrm{d}x =
\int_{\mathbb{R}^d} H_{n,\varepsilon}(x,t)
\psi_{n,\varepsilon}(x,t) \, \rho(x) \mathrm{d}x
\end{equation}
holds. Thanks to the validity of the differential equation
\begin{equation}\label{eq: deriv-t-H}
\left( H_{n,\varepsilon} \right)_t(x,t) =
\left(a_n(x,t)+\varepsilon\right)\psi_{n,\varepsilon}(x,t) \ \ \
\textrm{for a.e.\ } (x,t) \in \mathbb{R}^d \times (0,T) \, ,
\end{equation}
which can be justified as we did for \eqref{eq: equation-g}, taking the time derivative of \eqref{eq: per-parti-H} in
the intervals $(t_{k+1},t_k) $, using \eqref{eq: deriv-t-H}, \eqref{eq:
equation-psi} and again Proposition \ref{pro: laplaciano-Lp}, we obtain:
\begin{equation}\label{eq: per-parti-H-dt}
\begin{aligned}
\frac{\mathrm{d}}{\mathrm{d}t} \left\|  H_{n,\varepsilon}(t)
\right\|_{\dot{H}^s}^2 = 2 \int_{\mathbb{R}^d}
\left(a_n(x,t)+\varepsilon\right)\psi^2_{n,\varepsilon}(x,t) \,
\rho(x) \mathrm{d}x \, .
\end{aligned}
\end{equation}
A priori, from \eqref{eq: per-parti-H}, we have that $ \|
H_{n,\varepsilon}(t) \|^2_{\dot{H}^s} $ is continuous on $ (0,T] $
and absolutely continuous only on $ (t_{k+1},t_k) $. However, the
r.h.s.\ of \eqref{eq: per-parti-H-dt} is in $L^1((\tau,T)) $ for any
$ \tau \in (0,T) $. Hence, \eqref{eq: per-parti-H-dt-parti} just
follows by integrating \eqref{eq: per-parti-H-dt} from $t$ to $T$.
\end{proof}

\subsection{Passing to the limit as $n\to\infty$}
The goal of the next lemma is to show that, as $n\to\infty$, $ \{
\psi_{n,\varepsilon} \} $ suitably converges to a limit function $
\psi_\varepsilon $ that enjoys some crucial properties.
\begin{lem}
Let $d>2s$ and assume that $\rho$ satisfies \eqref{eq: ass-rho} for some $ \gamma \in (0,2s) \cap (0,d-2s] $. Let $u_1$ and
$u_2$ be two weak solutions to problem \eqref{eq: barenblatt-regular},
taking the common positive finite Radon measure $\mu$ as initial datum.
Let $g$ be as in \eqref{eq: definition-g}, $a$ as in \eqref{eq:
defa} and $\psi_{n,\varepsilon}, \psi$ as in Lemma \ref{lem:
approx}. Then, up to subsequences, $\{ \psi_{n,\varepsilon} \}$
converges weakly in $L^2_{\rho}(\mathbb{R}^d \times (\tau,T) )$
(for all $ \tau \in (0,T) $) to a suitable nonnegative function
$\psi_\varepsilon$ and $\{ \rho(\cdot) \psi_{n,\varepsilon}(\cdot,t)
\}$ converges to $\rho(\cdot) \psi_\varepsilon(\cdot,t)$ in
$\sigma(\mathcal{M}(\mathbb{R}^d),C_b(\mathbb{R}^d)) $ for a.e.\
$t \in (0,T)$. Moreover, $\psi_{\varepsilon}$ enjoys the
following properties:
\begin{equation}\label{eq: cons-mass-epsilon}
\int_{\mathbb{R}^d} \psi_{\varepsilon}(x,t) \, \rho(x)
\mathrm{d}x = \int_{\mathbb{R}^d} \psi(x) \, \rho(x)
\mathrm{d}x \, ,
\end{equation}
\begin{equation}\label{eq: 15pierre-1-epsilon}
\begin{aligned}
& \int_{\mathbb{R}^d} \psi(x) \varphi(x) \, \rho(x)  \mathrm{d}x - \int_{\mathbb{R}^d}  \psi_{\varepsilon}(x,t) \varphi(x) \, \rho(x) \mathrm{d}x \\
= &  \int_{\mathbb{R}^d} (-\Delta)^s(\varphi)(x) \left(
\int_{t}^{T}
\left(a(x,\tau)+\varepsilon\right)\psi_{\varepsilon}(x,\tau) \,
\mathrm{d}\tau \right) \mathrm{d}x \, ,
\end{aligned}
\end{equation}
\begin{equation}\label{eq: equation-psi-g-derivata-parti-tempo-epsilon-1}
\begin{aligned}
& \left| \int_{\mathbb{R}^d} g(x,T) \psi(x) \, \rho(x) \mathrm{d}x - \int_{\mathbb{R}^d} g(x,t) \psi_{\varepsilon}(x,t) \, \rho(x) \mathrm{d}x \right| \\
\le & \varepsilon \, (T-t) \, \left\| \psi \right\|_{1,\rho}
\left\|  u_2(\tau+h)-u_1(\tau) \right\|_{L^\infty(\mathbb{R}^d
\times (t,T))}
\end{aligned}
\end{equation}
for a.e.\ $t \in (0,T)$, for any $ \varphi \in\mathcal{D}(\mathbb{R}^d) $.
\end{lem}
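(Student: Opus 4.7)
The plan rests on four passages. First, since $a_n\ge 0$ and $\varepsilon>0$, identity \eqref{eq: per-parti-H-dt-parti} yields the uniform bound
\[
2\varepsilon \int_0^T \|\psi_{n,\varepsilon}(\tau)\|_{2,\rho}^2 \, \mathrm{d}\tau \le \|I_{2s}\ast(\rho\psi)\|_{\dot H^s}^2 ,
\]
so by weak compactness, up to a subsequence $\psi_{n,\varepsilon} \rightharpoonup \psi_\varepsilon\ge 0$ in $L^2_\rho(\mathbb R^d \times (\tau,T))$ for every $\tau\in(0,T)$, with nonnegativity preserved in the weak limit.

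To upgrade this to pointwise-in-$t$ convergence in the narrow topology, fix $\varphi\in\mathcal D(\mathbb R^d)$. Identity \eqref{eq: 15pierre-1}, the uniform bound \eqref{eq: markov-nonexp-L1}, and the uniform $L^\infty$ bound on $a_n$ on compact subintervals of $(0,T]$ show that $F_n^\varphi(t):= \int \psi_{n,\varepsilon}(\cdot,t)\,\varphi\,\rho\,\mathrm{d}x$ is equibounded and equicontinuous on any $[\tau,T]$, with modulus controlled by $\|\rho^{-1}(-\Delta)^s\varphi\|_\infty \|\psi\|_{1,\rho}|t^\ast - t_\ast|$ (the weighted $L^\infty$ norm is finite because $(-\Delta)^s\varphi$ decays like $|x|^{-d-2s}$ at infinity and $\rho^{-1}$ is bounded near the origin by $\gamma_0\ge 0$). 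Arzelà--Ascoli on a countable dense family $\{\varphi_k\}\subset C_c(\mathbb R^d)$, followed by a diagonal extraction, produces a subsequence along which $F_n^{\varphi_k}(t)$ converges for every $k$ and every $t\in(0,T]$; pairing the weak $L^2_\rho$ limit with $\eta(t)\varphi_k(x)$ for arbitrary $\eta\in L^\infty(\tau,T)$ identifies this limit with $\int \psi_\varepsilon(\cdot,t)\varphi_k\rho\,\mathrm{d}x$ for a.e.\ $t$. Hence $\rho\psi_{n,\varepsilon}(t)\to \rho\psi_\varepsilon(t)$ in $\sigma(\mathcal M,C_c)$ for a.e.\ $t$, and by the uniform total mass, in $\sigma(\mathcal M,C_0)$ as well. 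To prove \eqref{eq: cons-mass-epsilon} and upgrade the narrow convergence to $\sigma(\mathcal M,C_b)$, I would plug $\varphi=\xi_R$ into \eqref{eq: 15pierre-1} with $t_\ast = t$, $t^\ast = T$, and estimate the right-hand side as in \eqref{eq: 15pierre-3} to obtain a deviation bounded by $\underline C(T-t)\|\psi\|_{1,\rho}c^{-1}(R^{-2s}+R^{-2s+\gamma})\|(1+|x|^\gamma)(-\Delta)^s\xi\|_\infty$ uniformly in $n$; passing first to $n\to\infty$ (using the pointwise-in-$t$ convergence above) and then to $R\to\infty$ yields \eqref{eq: cons-mass-epsilon}, and Proposition~1.80 of \cite{AFP} then promotes the convergence to $\sigma(\mathcal M,C_b)$.

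Passing to the limit in \eqref{eq: 15pierre-1} itself yields \eqref{eq: 15pierre-1-epsilon}: since $a_n\to a$ a.e.\ and boundedly and $(-\Delta)^s\varphi$ has Schwartz-like decay, $(a_n+\varepsilon)(-\Delta)^s\varphi$ converges strongly in $L^2_\rho(\mathbb R^d\times(t,T))$ by dominated convergence, which pairs with the weak $L^2_\rho$-convergence of $\psi_{n,\varepsilon}$. For \eqref{eq: equation-psi-g-derivata-parti-tempo-epsilon-1}, I would split the right-hand side of \eqref{eq: equation-psi-g-derivata-parti-tempo} as
\[
\int_t^T\!\int_{\mathbb R^d}[(a_n-a)+\varepsilon](-\Delta)^s(g)\,\psi_{n,\varepsilon}\,\mathrm{d}x\,\mathrm{d}\tau.
\]
Recalling $(-\Delta)^s(g) = \rho(u_2(\cdot,\cdot+h)-u_1)\in L^\infty(\mathbb R^d\times(t,T))$, dominated convergence drives $(a_n-a)(-\Delta)^s(g)\to 0$ in $L^2_\rho(\mathbb R^d\times(t,T))$, and its pairing with the weakly converging $\psi_{n,\varepsilon}$ vanishes. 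The remaining $\varepsilon$-piece is bounded in absolute value by $\varepsilon(T-t)\|u_2(\cdot+h)-u_1\|_{L^\infty(\mathbb R^d\times(t,T))}\|\psi\|_{1,\rho}$ via \eqref{eq: 15pierre-4}. The left-hand side of \eqref{eq: equation-psi-g-derivata-parti-tempo} converges for a.e.\ $t$ using the narrow convergence of the second passage together with the integrability of $g(\cdot,t)$ in weighted $L^p$ spaces established in the proof of Lemma \ref{lem: lemma-prima-eq}.

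The main obstacle is the second passage above: weak $L^2_\rho$ compactness alone gives no pointwise-in-$t$ information on the slices $\rho\psi_{n,\varepsilon}(t)$, and one must derive time-equicontinuity directly from the integral formulation \eqref{eq: 15pierre-1}, marry Arzelà--Ascoli on a countable family of test functions with a diagonal extraction, and finally perform a tightness upgrade via cut-off functions in order to promote convergence from $\sigma(\mathcal M,C_c)$ to $\sigma(\mathcal M,C_b)$ and secure the mass identity \eqref{eq: cons-mass-epsilon}.
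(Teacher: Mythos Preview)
Your proof is correct and follows essentially the same route as the paper: the uniform $L^2_\rho$ bound from \eqref{eq: per-parti-H-dt-parti}, the cut-off argument for mass conservation and the upgrade to $\sigma(\mathcal{M},C_b)$, and the weak--strong pairing to pass to the limit in \eqref{eq: 15pierre-1} and \eqref{eq: equation-psi-g-derivata-parti-tempo} are all exactly what the paper does.

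The one genuine methodological difference is in the ``second passage'': to identify the pointwise-in-$t$ limit of $\rho\psi_{n,\varepsilon}(\cdot,t)$, the paper does \emph{not} use Arzel\`a--Ascoli plus a diagonal extraction. Instead it first fixes the weak $L^2_\rho$ limit $\psi_\varepsilon$, then for a.e.\ $t$ extracts (by $L^1$ compactness) a $t$-dependent subsequence converging to some measure $\nu(t)$, and finally identifies $\nu(t)=\rho\psi_\varepsilon(\cdot,t)$ via a Lebesgue-point argument: averaging \eqref{eq: 15pierre-2} over $[t,t+\delta]$, passing to the subsequence limit, and letting $\delta\to 0$. Your Arzel\`a--Ascoli argument is arguably cleaner in that it produces a single subsequence working for every $t\in(0,T]$ at once; the paper's Lebesgue-point route avoids the diagonal extraction but yields the identification only at Lebesgue points of the curve $t\mapsto\psi_\varepsilon(\cdot,t)$.

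Two small points of precision: your countable dense family should lie in $\mathcal{D}(\mathbb{R}^d)$ rather than merely $C_c(\mathbb{R}^d)$, since the equicontinuity bound \eqref{eq: 15pierre-2} requires $\|\rho^{-1}(-\Delta)^s\varphi\|_\infty<\infty$; and for the left-hand side of \eqref{eq: equation-psi-g-derivata-parti-tempo} the property you need is that $g(\cdot,t)\in C_b(\mathbb{R}^d)$ (Lemma~\ref{lem: potential-basic-prop-1}), not its weighted $L^p$ integrability.
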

\begin{proof}
From \eqref{eq: per-parti-H-dt-parti} one gets that, up to
subsequences, $\{ \psi_{n,\varepsilon}\} $ converges weakly in
$L^2_{\rho}\left(\mathbb{R}^d \times (\tau,T) \right)$ (for all
$ \tau \in (0,T) $) to a suitable $\psi_\varepsilon$. Moreover,
thanks to the uniform boundedness of $ \{\rho(\cdot)
\psi_{n,\varepsilon}(\cdot,t)\} $ in $L^1(\mathbb{R}^d)$ (see \eqref{eq:
markov-nonexp-L1}), for every $ t \in (0,T) $ there exists a
subsequence (which a priori may depend on $t$) such that $
\{\rho(\cdot)
\psi_{n,\varepsilon}(\cdot,t)\} $ converges in
$\sigma(\mathcal{M}(\mathbb{R}^d),C_c(\mathbb{R}^d)) $ to some
positive, finite Radon measure $\nu(t)$ (recall the preliminary results
of Section \ref{sec: not-def}). We aim at identifying (at least
for almost every $ t \in (0,T) $) $ \nu(t) $ with $\rho(\cdot)
\psi_{\varepsilon}(\cdot,t)$, so that a posteriori the subsequence does
not depend on $t$. In order to do that, let $t \in (0,T) $ be a
Lebesgue point of $ \psi_\varepsilon(\cdot,t) $ (as a curve in
$L^1((\tau,T); L^2_{\rho}({\mathbb R}^d)$).
Taking any $\varphi \in \mathcal{D}(\mathbb{R}^d) $ and using
\eqref{eq: 15pierre-2}, we obtain:
\begin{equation}\label{eq: 15pierre-idea-1}
\begin{aligned}
& \left| \int_{t}^{t+\delta} \int_{\mathbb{R}^d} \psi_{n,\varepsilon}(x,\tau) \varphi(x) \, \rho(x) \mathrm{d}x \mathrm{d}\tau - \int_{t}^{t+\delta} \int_{\mathbb{R}^d} \psi_{n,\varepsilon}(x,t) \varphi(x) \, \rho(x) \mathrm{d}x \mathrm{d}\tau \right| \\
\le &  \int_{t}^{t+\delta} \left| \int_{\mathbb{R}^d} \psi_{n,\varepsilon}(x,\tau) \varphi(x) \, \rho(x) \mathrm{d}x - \int_{\mathbb{R}^d} \psi_{n,\varepsilon}(x,t) \varphi(x) \, \rho(x) \mathrm{d}x \right| \mathrm{d}\tau  \\
\le & \int_{t}^{t+\delta} \underline{C} \, (\tau-t) \left\| \psi
\right\|_{1,\rho} \left\| \rho^{-1}(-\Delta)^s(\varphi)
\right\|_\infty \mathrm{d}\tau = \frac{\delta^2}{2} \underline{C} \left\| \psi
\right\|_{1,\rho} \left\|
\rho^{-1}(-\Delta)^s(\varphi)\right\|_\infty
\end{aligned}
\end{equation}
for all $ \delta $ sufficiently small. Letting $n \to \infty$ (up
to subsequences) in \eqref{eq: 15pierre-idea-1} yields
\begin{equation}\label{eq: 15pierre-idea-2}
\left| \int_{t}^{t+\delta} \int_{\mathbb{R}^d}
\psi_{\varepsilon}(x,\tau) \varphi(x) \, \rho(x)
\mathrm{d}x \mathrm{d}\tau - \delta \int_{\mathbb{R}^d}
\varphi(x) \, \mathrm{d}\nu(t) \right| \le \frac{\delta^2}{2} \underline{C}
\left\| \psi \right\|_{1,\rho} \left\|
\rho^{-1}(-\Delta)^s(\varphi)\right\|_\infty .
\end{equation}
Dividing \eqref{eq: 15pierre-idea-2} by $\delta$ and letting
$\delta \to 0$ one deduces that (recall that $t$ is a Lebesgue
point for $ \psi_{\varepsilon}(\cdot,t) $)
\begin{equation*}
\int_{\mathbb{R}^d}  \psi_{\varepsilon}(x,t) \varphi(x) \,
\rho(x) \mathrm{d}x = \int_{\mathbb{R}^d} \varphi(x) \,
\mathrm{d}\nu(t) \, ,
\end{equation*}
which is valid for any $\varphi \in \mathcal{D}(\mathbb{R}^d)$,
whence $ \psi_{\varepsilon}(x,t) \rho(x) \mathrm{d}x =
\mathrm{d}\nu(t)$.

We now prove the claimed properties of
$\psi_{\varepsilon}$. Letting $ n \to \infty $ in \eqref{eq:
15pierre-3} (with $ t^\ast=T $ and $ t_\ast=t $) and using the just
proved convergence of $ \{\rho(\cdot)\psi_{n,\varepsilon}(\cdot,t)\} $
to $\rho(\cdot)\psi_{\varepsilon}(\cdot,t) $ in
$\sigma(\mathcal{M}(\mathbb{R}^d),C_c(\mathbb{R}^d)) $, we get
\begin{equation}\label{eq: 15pierre-lim}
\begin{aligned}
& \left| \int_{\mathbb{R}^d}  \psi(x) \xi_R(x) \, \rho(x)
\mathrm{d}x - \int_{\mathbb{R}^d} \psi_{\varepsilon}(x,t)
\xi_R(x) \, \rho(x)  \mathrm{d}x \right| \\
\le & \underline{C} \, (T-t) \left\| \psi \right\|_{1,\rho} c^{-1} \left( R^{-2s} + R^{-2s+\gamma} \right) \left\| (1+|x|^\gamma) (-\Delta)^{s}(\xi) \right\|_\infty
\end{aligned}
\end{equation}
for a.e.\ $t \in (0,T)$, $c$ being as in \eqref{eq: ass-rho}. Letting $R \to \infty $ in \eqref{eq:
15pierre-lim} we deduce \eqref{eq: cons-mass-epsilon}. Thanks to
\eqref{eq: 15pierre-4} and \eqref{eq: cons-mass-epsilon} we infer
in particular that
$$ \lim_{n \to \infty} \| \psi_{n,\varepsilon}(t) \|_{1,\rho} = \| \psi_{\varepsilon}(t) \|_{1,\rho} \, , $$
so that the convergence of
$ \{\rho(\cdot)\psi_{n,\varepsilon}(\cdot,t)\} $ to
$\rho(\cdot)\psi_{\varepsilon}(\cdot,t) $ also takes place in
$\sigma(\mathcal{M}(\mathbb{R}^d),C_b(\mathbb{R}^d))$. Recalling
that $g(\cdot,t)$ belongs to $C_b(\mathbb{R}^d)$ (Lemma \ref{lem:
potential-basic-prop-1}), we can let $n \to \infty$ in \eqref{eq:
equation-psi-g-derivata-parti-tempo} to obtain
\begin{equation}\label{eq: equation-psi-g-derivata-parti-tempo-epsilon}
\begin{aligned}
& \int_{\mathbb{R}^d} g(x,T) \psi(x) \, \rho(x) \mathrm{d}x - \int_{\mathbb{R}^d} g(x,t) \psi_{\varepsilon}(x,t) \, \rho(x) \mathrm{d}x  \\
 = & \lim_{n \to \infty} \left( \int_t^T \int_{\mathbb{R}^d} \left(a_n(x,\tau)+\varepsilon -a(x,\tau)\right)(-\Delta)^s(g)(x,\tau) \, \psi_{n,\varepsilon}(x,\tau) \, \mathrm{d}x \mathrm{d}\tau \right) \\
= &  \lim_{n \to \infty} \left( \int_t^T \int_{\mathbb{R}^d} \left(a_n(x,\tau)+\varepsilon -a(x,\tau)\right)\left(u_2(x,\tau+h)-u_1(x,\tau)\right) \psi_{n,\varepsilon}(x,\tau) \, \rho(x) \mathrm{d}x \mathrm{d}\tau \right) \\
= & \varepsilon \int_t^T \int_{\mathbb{R}^d}
\left(u_2(x,\tau+h)-u_1(x,\tau)\right) \psi_{\varepsilon}(x,\tau)
\, \rho(x) \mathrm{d}x \mathrm{d}\tau \ \ \
\textrm{for a.e.\ } t \in (0,T)  \, ,
\end{aligned}
\end{equation}
where in the last integral we can pass to the limit since $
\{\psi_{n,\varepsilon}\} $ tends to $\psi_{\varepsilon}$ weakly in
$L^2_{\rho}(\mathbb{R}^d \times (t,T) )$, $ \{a_n\} $ tends to
$a$ pointwise with $ \{ \| a_n \|_{L^\infty(\mathbb{R}^d \times(t,T) )} \} $ bounded, and $u_1, u_2 $
belong to $L^p_{\rho}(\mathbb{R}^d \times (t,T+h) )$ for all $p
\in [1,\infty]$. In particular, from \eqref{eq:
equation-psi-g-derivata-parti-tempo-epsilon} and \eqref{eq:
cons-mass-epsilon} we get \eqref{eq:
equation-psi-g-derivata-parti-tempo-epsilon-1}.
Notice that, in a similarly way, we can pass to the limit in
\eqref{eq: 15pierre-1} (which actually holds for any $ t_\ast ,t^\ast \in
(0,T) $) and get \eqref{eq: 15pierre-1-epsilon}.
\end{proof}

\subsection{Passing to the limit as $\varepsilon\to 0$ and proof of Theorem \ref{thm: teorema-uniqueness}}
We are now in position to prove Theorem \ref{thm:
teorema-uniqueness}, using the strategy of \cite{Pierre}: we give some
detail for the reader's convenience.

\begin{proof}[Proof of Theorem \ref{thm: teorema-uniqueness}]
To begin with, we introduce the Riesz potential $H_\varepsilon(\cdot,t)$
of $\rho(\cdot) \psi_{\varepsilon}(\cdot,t)$.
Since we only know that $ \rho(\cdot) \psi_{\varepsilon}(\cdot,t) \in
L^1(\mathbb{R}^d) $, we have no information over the integrability
of $ H_\varepsilon(\cdot,t) $ other than $ L^1_{\rm loc}(\mathbb{R}^d) $ (by
classical results, see e.g.\ \cite[p.\ 61]{Landkof}). However, exploiting
\eqref{eq: 15pierre-1-epsilon} and proceeding once again as in the
proof of \eqref{eq: eq-formale-potenziale-epsilon}, we obtain
\begin{equation*}
I_{2s} \ast \left( \rho \psi \right) - H_\varepsilon(\cdot,t) =
\int_t^T (a(\cdot,\tau)+\varepsilon) \, \psi_{\varepsilon}(\cdot,\tau) \,
\mathrm{d}\tau \ge 0 \ \ \ \textrm{for a.e.\ } t \in (0,T) \,
,
\end{equation*}
whence, in particular,
\begin{equation}\label{eq: potenziali-epsilon-decrescenza-bis}
0 \le  H_\varepsilon(x,t_1) \le H_\varepsilon(x,t_2) \le
H_\varepsilon(x,T) = I_{2s} \ast \left( \rho \psi \right)(x)
\end{equation}
for $\textrm{a.e.\ } 0 < t_1 \le t_2 \le T $ and $ \textrm{a.e.\ } x \in \mathbb{R}^d$.
The above inequality shows that  $H_\varepsilon(\cdot,t)$ belongs to $
L^p(\mathbb{R}^d) $ at least for the same $p $ for which $
H_\varepsilon(\cdot,T) $ does, namely for any $ p \in (d/(d-2),\infty]
$.

Our next goal is to let $ \varepsilon \to 0 $ (along a fixed sequence whose index for the moment we omit, in order to improve readability). 
Thanks to the boundedness of $\{ \rho(\cdot) \psi_{\varepsilon}(\cdot,t) \} $ in
$L^1(\mathbb{R}^d) $ (trivial consequence of \eqref{eq:
cons-mass-epsilon}), for a.e.\ $t\in (0,T) $ there exists a
subsequence $\{ \varepsilon_n \} $ (a priori depending on $t$)
such that $\{ \rho(\cdot) \psi_{\varepsilon}(\cdot,t)\} $ converges
to a positive finite Radon measure $\nu(t)$ in
$\sigma(\mathcal{M}(\mathbb{R}^d),C_c(\mathbb{R}^d))$. In order to
overcome the possible dependence of $ \{ \varepsilon_n \} $ on $ t
$, we exploit the properties of $ \{ H_\varepsilon \} $. First
notice that \eqref{eq: potenziali-epsilon-decrescenza-bis} ensures
the uniform boundedness of $ \{H_\varepsilon \}  $ in $
L^p(\mathbb{R}^d \times (0,T)) $ for any $ p \in (d/(d-2),\infty]
$. This entails the existence of a decreasing subsequence $\{
\varepsilon_m \} $ such that
$ \{H_{\varepsilon_m}\} $ converges weakly in $L^p(\mathbb{R}^d
\times (0,T))$ to a suitable limit $H$. Mazur's Lemma implies
that there exists a sequence $ \{H_k\} $ of convex combinations of
$ \{H_{\varepsilon_m}\} $ that converges \emph{strongly} to $H$ in
$L^p(\mathbb{R}^d \times (0,T))$. By definition, the sequence $ \{
H_k \} $ is of the form
\begin{equation*}
H_k = \sum_{m=1}^{M_k} \lambda_{m,k} H_{\varepsilon_m} \, , \ \ \
\sum_{m=1}^{M_k} \lambda_{m,k}=1
\end{equation*}
for some sequence $ \{M_k\} \subset \mathbb{N} $ and a suitable
choice of the coefficients $\lambda_{m,k} \in [0,1] $. With no
loss of generality we shall also assume that
\begin{equation*}\label{eq: comb-conv-H-epsilon}
\lim_{k \rightarrow \infty} \left(\sum_{m=1}^{M_k} \varepsilon_m
\lambda_{m,k}\right) = 0 \, .
\end{equation*}
This can be justified by applying iteratively Mazur's Lemma on
suitable subsequences of $\{ H_{\varepsilon_m} \}$.
Now notice that the function whose Riesz potential is $H_k$ is
$$ f_k(x,t) = \sum_{m=1}^{M_k} \lambda_{m,k} \, \rho(x) \psi_{\varepsilon_m}(x,t) \, . $$
Multiplying \eqref{eq:
equation-psi-g-derivata-parti-tempo-epsilon-1} (with $
\varepsilon=\varepsilon_m $) by $ \lambda_{m,k} $ and summing over
$k$, one gets that $f_k$ satisfies
\begin{equation}\label{eq: equation-psi-g-derivata-parti-tempo-k}
\begin{aligned}
& \left| \int_{\mathbb{R}^d} g(x,T) \psi(x) \, \rho(x) \mathrm{d}x - \int_{\mathbb{R}^d} g(x,t) f_k(x,t) \, \mathrm{d}x \right| \\
\le & \left(\sum_{m=1}^{M_k} \varepsilon_m \lambda_{m,k}\right)
(T-t) \, \| \psi \|_{1,\rho} \left\|  u_2(\tau+h)-u_1(\tau)
\right\|_{L^\infty(\mathbb{R}^d \times (t,T))}
\end{aligned}
\end{equation}
for a.e.\ $t \in (0,T)$, whereas from \eqref{eq:
cons-mass-epsilon} and \eqref{eq: 15pierre-lim} we infer that
\begin{equation}\label{eq: 15pierre-lim-k}
\begin{aligned}
 &\left| \int_{\mathbb{R}^d} \psi(x) \xi_R(x) \, \rho(x) \mathrm{d}x - \int_{\mathbb{R}^d} f_k(x,t) \, \xi_R(x) \, \mathrm{d}x \right|\\
 \le  & \underline{C} \,(T-t) \left\| \psi \right\|_{1,\rho} c^{-1} \left( R^{-2s} + R^{-2s+\gamma} \right) \left\| (1+|x|^\gamma) (-\Delta)^{s}(\xi) \right\|_\infty
\end{aligned}
\end{equation}
for a.e.\ $ t \in (0,T) $ and
\begin{equation}\label{eq: 15pierre-lim-k-cons-massa}
\int_{\mathbb{R}^d}  \psi(x) \, \rho(x) \mathrm{d}x =
\int_{\mathbb{R}^d} f_k(x,t) \, \mathrm{d}x \ \ \ \textrm{for
a.e.\ } t \in (0,T)  \, .
\end{equation}
Letting $k \to \infty$ we find that, for a.e.\ $ t \in (0,T) $,
there exists a subsequence of $ \{f_k(\cdot,t)\} $ (a priori depending
on $t$) that converges in
$\sigma(\mathcal{M}(\mathbb{R}^d),C_c(\mathbb{R}^d))$ to a
positive, finite Radon measure $\nu(t)$. But the fact that $ \{ H_k \} $
converges strongly in $ L^p(\mathbb{R}^d\times(0,T)) $ to $ H $
forces the potential of $ \nu(t) $ to coincide a.e.\ with $H(\cdot,t)$.
This is a consequence of \cite[Theorem 3.8]{Landkof}. By \cite[Theorem 1.12]{Landkof} we therefore deduce that the limit $ \nu(t) $ is
uniquely determined by its potential $H(\cdot,t)$. This identification
allows to assert that for a.e.\ $ t \in (0,T) $ the \emph{whole}
sequence $ \{ f_k(\cdot,t) \} $ converges to $ \nu(t) $ in
$\sigma(\mathcal{M}(\mathbb{R}^d),C_c(\mathbb{R}^d))$.

Passing to the limit in \eqref{eq:
potenziali-epsilon-decrescenza-bis} (after having set $
\varepsilon=\varepsilon_m $, multiplied by $ \lambda_{m,k} $ and
summed over $k$) we deduce that also the potentials $ H(\cdot,t) $ of
$\nu(t)$ are ordered and bounded above by $ I_{2s} \ast (
\rho \psi ) $:
\begin{equation}\label{eq: potenziali-epsilon-decrescenza-k}
0 \le  H(x,t_1) \le H(x,t_2) \le I_{2s} \ast \left( \rho \psi \right)(x) \quad \textrm{for a.e.\ } 0 < t_1 \le t_2 \le T \, , \ \textrm{for a.e.\ } x \in \mathbb{R}^d .
\end{equation}
Letting $k \to \infty$ in \eqref{eq: 15pierre-lim-k} yields
\begin{equation}\label{eq: 15pierre-lim-k-bis}
\begin{aligned}
 &\left| \int_{\mathbb{R}^d}  \psi(x) \xi_R(x) \, \rho(x) \mathrm{d}x - \int_{\mathbb{R}^d}  \xi_R(x) \, \mathrm{d}\nu(t) \right| \\
 \le  & \underline{C} \, (T-t) \left\| \psi \right\|_{1,\rho} c^{-1} \left( R^{-2s} + R^{-2s+\gamma} \right) \left\| (1+|x|^\gamma) (-\Delta)^{s}(\xi) \right\|_\infty
\end{aligned}
\end{equation}
for a.e.\ $t \in (0,T)$, whence, letting $R\to\infty$ in
\eqref{eq: 15pierre-lim-k-bis}, we obtain
\begin{equation}\label{eq: 15pierre-lim-k-cons-massa-bis}
\int_{\mathbb{R}^d}  \psi(x) \, \rho(x) \mathrm{d}x =
\int_{\mathbb{R}^d} \mathrm{d}\nu(t) \ \ \ \textrm{for a.e.\ }
t \in (0,T) \, .
\end{equation}
Gathering \eqref{eq: 15pierre-lim-k-cons-massa} and \eqref{eq:
15pierre-lim-k-cons-massa-bis} we infer that $ \{f_k(\cdot,t)\} $
converges to $\nu(t)$ also in
$\sigma(\mathcal{M}(\mathbb{R}^d),C_b(\mathbb{R}^d))$: this allows us
to pass to the limit in \eqref{eq:
equation-psi-g-derivata-parti-tempo-k} to get (by
exploiting \eqref{eq: comb-conv-H-epsilon} as well) identity \eqref{eq:
equation-psi-g-derivata-parti-tempo-k-limite}. As a consequence of
the monotonicity given by \eqref{eq:
potenziali-epsilon-decrescenza-k} and thanks to \eqref{eq:
15pierre-lim-k-bis}-\eqref{eq: 15pierre-lim-k-cons-massa-bis}, the curve $\nu(t)$
can be extended to \emph{every} $ t \in (0,T] $ so that it still
satisfies \eqref{eq: potenziali-epsilon-decrescenza-k}-\eqref{eq: 15pierre-lim-k-cons-massa-bis} (one uses again
\cite[Theorem 3.8]{Landkof}).
Recalling that $g(x,t)=U_2(x,t+h)-U_1(x,t)$ and that
potentials do not increase in time (Lemma \ref{lem:
monotonicity}), we have that $g(x,t) \le U_2(x,h)-U_1(x,t_0)$
holds for all $x \in \mathbb{R}^d $ and all $t_0 >t $. Because
$\nu(t)$ is a positive finite Radon measure, this fact and \eqref{eq:
equation-psi-g-derivata-parti-tempo-k-limite} imply that
\begin{equation}\label{eq: equation-psi-g-derivata-parti-tempo-prova-1}
\int_{\mathbb{R}^d} g(x,T) \psi(x) \, \rho(x) \mathrm{d}x
\le \int_{\mathbb{R}^d} \left(U_2(x,h)-U_1(x,t_0) \right)
\mathrm{d}\nu(t) \ \ \ \forall t_0>t \, .
\end{equation}
Our next goal is to let $t$ tend to zero in \eqref{eq:
equation-psi-g-derivata-parti-tempo-prova-1}. Since the mass of
$\nu(t)$ is constant (formula \eqref{eq:
15pierre-lim-k-cons-massa-bis}), up to subsequences $ \nu(t) $
converges to a suitable positive finite Radon measure $\nu$ in
$\sigma(\mathcal{M}(\mathbb{R}^d),C_c(\mathbb{R}^d))$ as $ t \downarrow 0 $. Moreover,
by \eqref{eq: potenziali-epsilon-decrescenza-k}, we know that the
potentials $H(\cdot,t)$ of $\nu(t)$ are nondecreasing in $t$ (for a.e.\ $x$): in particular, $ H(\cdot,t) $ admits a pointwise limit almost everywhere $H_0$ as $t \downarrow 0$. Theorem 3.8 of \cite{Landkof} ensures that $H_0$ coincides almost everywhere with the potential of the limit measure $\nu$ (which therefore does not depend on the subsequence). We can then pass to the limit in the integral
\begin{equation}\label{eq: integrale-finale-lim}
\int_{\mathbb{R}^d} U_1(x,t_0) \, \mathrm{d}\nu(t) \, .
\end{equation}
Indeed, by Fubini's Theorem, \eqref{eq:
integrale-finale-lim} is equal to
\begin{equation}\label{eq: integrale-finale-lim-2}
\int_{\mathbb{R}^d} u_1(x,t_0) H(x,t) \, \rho(x)\mathrm{d}x
\, .
\end{equation}
Passing to the limit in \eqref{eq: integrale-finale-lim-2} as $ t
\downarrow 0 $ we get that
\begin{equation}\label{eq: integrale-finale-lim-3}
\lim_{t \downarrow 0} \int_{\mathbb{R}^d} u_1(x,t_0) H(x,t) \,
\rho(x) \mathrm{d}x = \int_{\mathbb{R}^d} u_1(x,t_0) H_0(x)
\, \rho(x) \mathrm{d}x
\end{equation}
by dominated convergence. Recalling that $H_0$ is the
potential of $\nu$, and using again Fubini's Theorem, \eqref{eq:
integrale-finale-lim-3} can be rewritten as
\begin{equation*}
\lim_{t \downarrow 0} \int_{\mathbb{R}^d} U_1(x,t_0) \,
\mathrm{d}\nu(t) = \int_{\mathbb{R}^d} U_1(x,t_0)   \,
\mathrm{d}\nu \, .
\end{equation*}
One proceeds similarly for the integral
\begin{equation*}
\int_{\mathbb{R}^d} U_2(x,h) \,  \mathrm{d}\nu(t) \, .
\end{equation*}
Hence, passing to the limit as $t \downarrow 0$ in \eqref{eq:
equation-psi-g-derivata-parti-tempo-prova-1} yields
\begin{equation}\label{eq: equation-psi-g-derivata-parti-tempo-prova-1-limite}
\int_{\mathbb{R}^d} g(x,T) \psi(x) \, \rho(x) \mathrm{d}x
\le \int_{\mathbb{R}^d} \left( U_2(x,h)-U_1(x,t_0) \right)
\mathrm{d}\nu \ \ \ \forall t_0 > 0 \, .
\end{equation}
Now we let $t_0 \downarrow 0 $ in \eqref{eq:
equation-psi-g-derivata-parti-tempo-prova-1-limite}. By monotone
convergence (Lemmas \ref{lem: monotonicity} and \ref{lem:
potenziale}) we obtain
\begin{equation}\label{eq: equation-psi-g-derivata-parti-tempo-prova-1-limite-mono}
\int_{\mathbb{R}^d} g(x,T) \psi(x) \, \rho(x) \mathrm{d}x \le
\int_{\mathbb{R}^d} \left( U_2(x,h)-U^\mu(x) \right) \mathrm{d}\nu
\, ;
\end{equation}
in this step it is crucial that the limit of $ U_1(x,t_0) $ to
$U^\mu(x)$ is taken \emph{for every} $x \in \mathbb{R}^d$ (Lemma
\ref{lem: potenziale}), because we have no information over $\nu$
besides the fact that it is a positive finite Radon measure.
Still by monotonicity we have that $ U_2(x,h) \le U^\mu(x)$ for
every $x \in \mathbb{R}^d$. Thus, from \eqref{eq:
equation-psi-g-derivata-parti-tempo-prova-1-limite-mono} it
follows that
\begin{equation}\label{eq: equation-psi-g-derivata-parti-tempo-prova-2-limite-mono}
\int_{\mathbb{R}^d} g(x,T) \psi(x) \, \rho(x) \mathrm{d}x
\le 0 \, .
\end{equation}
Since \eqref{eq: equation-psi-g-derivata-parti-tempo-prova-2-limite-mono} holds for
any $h,T>0 $ and any $ \psi \in
\mathcal{D}_{+}(\mathbb{R}^d) $, we infer that $ U_2 \le U_1 $.
By interchanging the role of $ u_1 $ and $ u_2 $ we get that $
U_1 \le U_2 $, whence $ U_1 = U_2 $ and $ u_1=u_2 $.
\end{proof}

\appendix

\section{}\label{sect: app-basic}
We recall here some basic properties of the fractional Laplacian
(and of a similar nonlocal, nonlinear operator) of functions in $ \mathcal{D}(\mathbb{R}^d) $. We omit the proofs of the first two lemmas, since they follow by exploiting the same strategy of \cite[Lemma 2.1]{BV}.

\begin{lem} \label{lem:decay-lap-1}
The $s$-Laplacian $(-\Delta)^s(\phi)(x) $  of any $
\phi \in \mathcal{D}(\mathbb{R}^d) $ is a regular function which
decays (together with its derivatives) at least like $|x|^{-d-2s}$
as $|x| \to \infty $.
\end{lem}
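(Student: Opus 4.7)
The plan is to work directly with the singular-integral representation. Fix $\phi \in \mathcal{D}(\mathbb{R}^d)$ and let $R>0$ be such that $\operatorname{supp}(\phi) \subset B_R$. I would split the argument into three parts: smoothness of $(-\Delta)^s(\phi)$, the decay estimate at infinity, and the corresponding decay for derivatives.

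For smoothness, I would split the defining integral as
\[
(-\Delta)^s(\phi)(x) = C_{d,s}\,\text{p.v.}\!\int_{|y-x|<1}\!\frac{\phi(x)-\phi(y)}{|x-y|^{d+2s}}\,\mathrm{d}y + C_{d,s}\int_{|y-x|\ge 1}\!\frac{\phi(x)-\phi(y)}{|x-y|^{d+2s}}\,\mathrm{d}y.
\]
In the near region, a second-order Taylor expansion of $\phi$ around $x$, combined with the symmetry of the kernel, shows that the integrand is bounded by $C\|D^2\phi\|_\infty |x-y|^{2-d-2s}$, which is integrable; differentiating under the integral sign is then justified by dominated convergence and yields that $(-\Delta)^s(\phi)$ is of class $C^\infty$. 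In the far region the kernel is smooth in $x$ and the integrand is dominated uniformly in $x$ on compact sets.

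For the decay, I would use the fact that for $|x|\ge 2R$ one has $\phi(x)=0$ and $\operatorname{supp}(\phi)\subset B_R$, so the principal value disappears and
\[
(-\Delta)^s(\phi)(x) = -\,C_{d,s}\int_{B_R}\frac{\phi(y)}{|x-y|^{d+2s}}\,\mathrm{d}y.
\]
Since $|x-y|\ge |x|-R \ge |x|/2$ for $y\in B_R$, we get
\[
\bigl|(-\Delta)^s(\phi)(x)\bigr| \le \frac{C_{d,s}\,2^{d+2s}}{|x|^{d+2s}}\,\|\phi\|_{L^1(\mathbb{R}^d)},
\]
which is the claimed decay.

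For the derivatives, the key observation is that $(-\Delta)^s$ commutes with any $\partial^\alpha$ when acting on $\mathcal{D}(\mathbb{R}^d)$: one can check directly from the integral representation (translating variables and differentiating under the integral) that $\partial^\alpha (-\Delta)^s(\phi) = (-\Delta)^s(\partial^\alpha \phi)$, and $\partial^\alpha \phi \in \mathcal{D}(\mathbb{R}^d)$ with support still in $B_R$. Applying the decay estimate just established to $\partial^\alpha \phi$ yields the same $|x|^{-d-2s}$ bound for every derivative. The main (very mild) technical obstacle is justifying the interchange of differentiation and the principal value in the near region; this is handled once and for all by the Taylor-expansion bound on the singular part.
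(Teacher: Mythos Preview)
Your argument is correct and is the standard direct approach to this lemma. The paper in fact omits the proof entirely, simply pointing to \cite[Lemma 2.1]{BV} for the strategy; what you have written is essentially that strategy carried out in detail: split the singular integral into near and far parts for regularity, then exploit $\phi(x)=0$ outside $\operatorname{supp}\phi$ to reduce the expression at large $|x|$ to a convolution against a kernel bounded by $|x|^{-d-2s}$, and finally propagate the decay to derivatives via the commutation $\partial^\alpha(-\Delta)^s\phi=(-\Delta)^s(\partial^\alpha\phi)$.
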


\begin{lem} \label{lem:decay-lap-2}
For any $ \phi \in \mathcal{D}(\mathbb{R}^d) $, the function
\begin{equation*}
l_s(\phi)(x) := \int_{\mathbb{R}^d}
\frac{(\phi(x)-\phi(y))^2}{|x-y|^{d+2s}} \, \mathrm{d}y \ \ \
\forall x \in \mathbb{R}^d
\end{equation*}
is regular and decays (together with its derivatives) at least
like $ |x|^{-d-2s} $ as $|x| \to \infty $.
\end{lem}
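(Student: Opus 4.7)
Let $R>0$ be such that $\operatorname{supp}(\phi)\subset B_R$. The plan is to establish smoothness and decay separately, handling the singularity of $|x-y|^{-d-2s}$ on the diagonal via the quadratic vanishing of $(\phi(x)-\phi(y))^2$.

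\textbf{Smoothness.} After the substitution $h=y-x$, we rewrite
\[
l_s(\phi)(x) \;=\; \int_{\mathbb{R}^d} \frac{(\phi(x)-\phi(x+h))^2}{|h|^{d+2s}}\,\mathrm{d}h.
\]
This places all the $x$-dependence inside a smooth integrand (for each fixed $h$), so the question becomes whether we may differentiate under the integral sign. For $|h|\le 1$, Taylor expansion yields $|\phi(x)-\phi(x+h)|\le C\|\nabla\phi\|_\infty|h|$, and for any multi-index $\alpha$ the same estimate holds for $\partial_x^\alpha(\phi(x)-\phi(x+h))$ with $\|\nabla\phi\|_\infty$ replaced by $\|\nabla\partial^\alpha\phi\|_\infty$. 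The product rule then gives
\[
\bigl|\partial_x^\alpha(\phi(x)-\phi(x+h))^2\bigr|\;\le\; C_\alpha |h|^2 \qquad (|h|\le 1),
\]
which produces the locally integrable bound $C_\alpha|h|^{2-d-2s}$ (integrable near $0$ precisely because $s<1$). For $|h|>1$, the compact support of $\phi$ makes $(\phi(x)-\phi(x+h))^2$ and all its $x$-derivatives uniformly bounded, so the tail contribution is trivially integrable against $|h|^{-d-2s}$. Thus $l_s(\phi)\in C^\infty(\mathbb{R}^d)$.

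\textbf{Decay at infinity.} For $|x|\ge 2R$ we have $\phi(x)=0$, so
\[
l_s(\phi)(x)\;=\;\int_{B_R}\frac{\phi(y)^2}{|x-y|^{d+2s}}\,\mathrm{d}y,
\]
and since $|x-y|\ge |x|-R\ge |x|/2$ for every $y\in B_R$, we obtain at once
\[
l_s(\phi)(x)\;\le\;\frac{2^{d+2s}\|\phi\|_2^2}{|x|^{d+2s}}.
\]
For any multi-index $\alpha$ we may differentiate under the integral (the integrand is smooth in $x$ on $B_R^c\supset\{|x|\ge 2R\}$), and since $|\partial_x^\alpha |x-y|^{-d-2s}|\le C_\alpha |x-y|^{-d-2s-|\alpha|}$, the same comparison gives $|\partial^\alpha l_s(\phi)(x)|\le C_\alpha' |x|^{-d-2s-|\alpha|}$, which is in fact stronger than what the statement asks.

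\textbf{Main obstacle.} The only delicate point is the smoothness assertion: everywhere inside the support of $\phi$ the kernel $|x-y|^{-d-2s}$ is genuinely singular, and one must verify that this singularity is absorbed by the quadratic vanishing of $(\phi(x)-\phi(y))^2$ uniformly when one differentiates in $x$. The $h$-substitution above is the key device that makes this transparent and reduces the issue to the Taylor estimate $|\partial_x^\alpha(\phi(x)-\phi(x+h))^2|=O(|h|^2)$ near $h=0$. The decay estimate, by contrast, is essentially a direct consequence of the compact support of $\phi$ and is completely analogous to the argument used for $(-\Delta)^s\phi$ in Lemma \ref{lem:decay-lap-1} (and in \cite[Lemma 2.1]{BV}).
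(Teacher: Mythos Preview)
Your argument is correct and follows precisely the strategy the paper has in mind: the paper omits the proof, pointing to \cite[Lemma~2.1]{BV}, and your treatment---the substitution $h=y-x$ together with the Taylor bound $|\partial_x^\alpha(\phi(x)-\phi(x+h))^2|=O(|h|^2)$ to control the diagonal singularity, plus the direct estimate on $\int_{B_R}\phi(y)^2|x-y|^{-d-2s}\,\mathrm{d}y$ for $|x|\ge 2R$---is exactly that strategy carried out in detail.
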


\begin{lem} \label{lem:decay-lap-cutoff}
For any $R>0$, let $\xi_R$ be the cut-off function
\begin{equation*}
\xi_R(x):=\xi\left(\frac{x}{R}\right)  \ \ \ \forall x \in
\mathbb{R}^d \, ,
\end{equation*}
where $ \xi(x) $ is a positive, regular function such that $
\|\xi\|_\infty \le 1 $, $ \xi \equiv 1$ in $B_1 $ and $ \xi \equiv 0$ in $B_2^c
$. Then, $ (-\Delta)^s(\xi_R) $ and $ l_s(\xi_R) $ enjoy the following property:
\begin{equation*}
(-\Delta)^s(\xi_R)(x)=\frac{1}{R^{2s}}(-\Delta)^s(\xi)\left(
\frac{x}{R} \right) , \quad l_s(\xi_R)(x)=\frac{1}{R^{2s}}l_s(\xi)\left( \frac{x}{R} \right) \ \ \ \forall x \in \mathbb{R}^d \, .
\end{equation*}
\begin{proof}
We only prove the result for $l_s(\xi_R)$, since the proof for
$(-\Delta)^s(\xi_R)$ is identical. Letting $ \widetilde{y}=y/R $, one has:
\begin{equation*}
l_s(\xi_R)(x)=  \int_{\mathbb{R}^d} \frac{(\xi_R(x)-\xi_R(y))^2}{|x-y|^{d+2s}} \, \mathrm{d}y
 = \frac{1}{R^{2s}} \int_{\mathbb{R}^d} \frac{(\xi( {x}/{R} )-\xi(\widetilde{y}) )^2}{|x/R-\widetilde{y}|^{d+2s}} \, \mathrm{d}\widetilde{y} = \frac{1}{R^{2s}}l_s(\xi)\left( \frac{x}{R} \right).
\end{equation*}
\end{proof}
\end{lem}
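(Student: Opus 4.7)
The statement to prove is a pair of scaling identities for $(-\Delta)^s$ and for the nonlinear functional $l_s$ when applied to the dilated cut-off $\xi_R(x)=\xi(x/R)$. My plan is to derive both by the same direct change of variables $\tilde y = y/R$; there is no need for any approximation or distributional argument because $\xi \in \mathcal{D}(\mathbb{R}^d)$, so both $(-\Delta)^s(\xi)$ and $l_s(\xi)$ are well-defined regular functions by Lemmas~\ref{lem:decay-lap-1} and \ref{lem:decay-lap-2}.

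For $l_s(\xi_R)$, I would start from the definition
\[
l_s(\xi_R)(x)=\int_{\mathbb{R}^d}\frac{(\xi(x/R)-\xi(y/R))^2}{|x-y|^{d+2s}}\,\mathrm{d}y
\]
and substitute $\tilde y = y/R$, so $\mathrm{d}y = R^d\,\mathrm{d}\tilde y$ and $|x-y|^{d+2s}=R^{d+2s}|x/R-\tilde y|^{d+2s}$. The Jacobian and the denominator combine to give an overall factor of $R^{d}/R^{d+2s}=R^{-2s}$, and the remaining integrand is exactly the defining expression of $l_s(\xi)$ evaluated at $x/R$. This yields the claimed identity for $l_s$.

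For $(-\Delta)^s(\xi_R)$, I would perform the identical substitution in the principal-value integral
\[
(-\Delta)^s(\xi_R)(x)=C_{d,s}\,\text{p.v.}\int_{\mathbb{R}^d}\frac{\xi(x/R)-\xi(y/R)}{|x-y|^{d+2s}}\,\mathrm{d}y.
\]
The one point that deserves a brief mention is that the change of variables maps the symmetric deleted ball $\{|y-x|>\varepsilon\}$ bijectively onto $\{|\tilde y - x/R|>\varepsilon/R\}$, so the principal value at $x$ for $\xi_R$ corresponds to the principal value at $x/R$ for $\xi$ (with a different, but still symmetric, vanishing radius). Since the p.v.\ limit exists and agrees for any sequence of symmetric deleted neighborhoods, the change of variables commutes with the p.v.\ operation and one again picks up the factor $R^{-2s}$.

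I do not anticipate any genuine obstacle: the argument is a one-line scaling computation in each case. The only technical care is justifying commutativity of the linear change of variables with the principal value in the $(-\Delta)^s$ identity, which is immediate from the linearity of $y\mapsto Ry$ and the symmetry of the excised neighborhoods.
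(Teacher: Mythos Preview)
Your proposal is correct and follows exactly the same route as the paper: a direct change of variables $\tilde y = y/R$ in the defining integral, with the observation that the two identities are proved identically. Your additional remark about the principal value commuting with the dilation is a small extra bit of care beyond what the paper writes, but the underlying argument is the same one-line scaling computation.
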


The next lemmas contain technical ingredients concerning fractional Sobolev spaces and Riesz potentials, which we need in the proofs of our existence and uniqueness results.
\begin{lem}\label{lem: Hs-loc}
Let $d>2s $ and assume that $\rho$ satisfies \eqref{eq: ass-rho} for some $ \gamma \in (0,d+2s] $. Consider a function $v \in
L^2_{\rm loc}((0,\infty);\dot{H}^s(\mathbb{R}^d)) $ such that, for all
$t_2>t_1>0$,
\begin{equation}\label{eq: lemma-Hs-3}
\int_{t_1}^{t_2} \! \int_{\mathbb{R}^d} \left| v(x,t) \right|^2
\rho(x) \mathrm{d}x \mathrm{d}t \le C_0 \, ,
\end{equation}
\begin{equation}\label{eq: lemma-Hs-1}
\int_{t_1}^{t_2} \! \int_{\mathbb{R}^d} \left|
(-\Delta)^{\frac{s}{2}} \left( v \right) (x,t) \right|^2
\mathrm{d}x \mathrm{d}t \le C_0
\end{equation}
and
\begin{equation}\label{eq: lemma-Hs-2}
\int_{t_1}^{t_2} \! \int_{\mathbb{R}^d} \left| v_t(x,t)  \right|^2
\rho(x) \mathrm{d}x \mathrm{d}t  \le C_0 \, ,
\end{equation}
where $C_0$ is a positive constant depending only on $t_1$ and
$t_2$. Take any cut-off functions $ \xi_1 \in
C^\infty_c(\mathbb{R}^d) $, $ \xi_2 \in C^\infty_c((0,\infty)) $
and define $ v_c: \mathbb{R}^d \rightarrow \mathbb{R}$ as follows:
\begin{equation*}
v_c(x,t):= \xi_1(x) \xi_2(t) v(x,t)  \ \ \ \forall(x,t) \in
\mathbb{R}^d \times \mathbb{R} \, ,
\end{equation*}
where we implicitly assume $ \xi_2 $ and $ v $ to be zero for $
t<0 $. Then
\begin{equation}\label{eq: lemma-Hs-stima-Hs}
\left\| v_c \right\|_{{H^s}  (\mathbb{R}^{d+1})}^2 = \left\| v_c
\right\|_{L^2(\mathbb{R}^{d+1})}^2 + \left\| v_c
\right\|_{\dot{H}^s(\mathbb{R}^{d+1})}^2  \le C^\prime
\end{equation}
for a positive constant $C^\prime$ that depends only on $ \rho $, $ \xi_1 $
and $ \xi_2 $ (also through $C_0$).
\begin{proof}
The validity of
\begin{equation}\label{eq: lemma-Hs-stima-L2-prova}
\left\| v_c  \right\|_{L^2(\mathbb{R}^{d+1})}^2  \le C^\prime
\end{equation}
is an immediate consequence of \eqref{eq: lemma-Hs-3} and of the
fact that $\rho$ is bounded away from zero on compact sets (from
now on $ C^\prime $ will be a constant as in the statement that may change from line to line). Moreover,
since $ (v_c)_t = \xi_1 \xi_2^\prime v + \xi_1 \xi_2 v_t$,
by \eqref{eq: lemma-Hs-3}, \eqref{eq: lemma-Hs-2} and again the
fact that $\rho$ is bounded away from zero on compact sets we
deduce that
\begin{equation}\label{eq: lemma-Hs-stima-L2dt-prova}
\left\| (v_c)_t \right\|_{L^2(\mathbb{R}^{d+1})}^2 \le C^\prime \,
.
\end{equation}
Now we have to handle the spatial regularity of $ v_c $. Straightforward computations show that
\begin{equation}\label{eq: derivate-spaziali-w-2}
\begin{aligned}
\left\|  v_c(t)  \right\|_{\dot{H}^s(\mathbb{R}^d)}^2  = & \frac{C_{d,s}}{2} \, \xi_2^2(t) \int_{\mathbb{R}^d} \xi_1^2(x) \left( \int_{\mathbb{R}^d}  \frac{\left( v(x,t) - v(y,t) \right)^2}{|x-y|^{d+2s}}  \, \mathrm{d}y \right) \mathrm{d}x \\
  & + \frac{C_{d,s}}{2} \, \xi_2^2(t) \int_{\mathbb{R}^d} \left|v(y,t)\right|^2 \left( \int_{\mathbb{R}^d}  \frac{\left( \xi_1(x) - \xi_1(y) \right)^2}{|x-y|^{d+2s}}  \, \mathrm{d}x \right) \mathrm{d}y  \\
& + C_{d,s}  \, \xi_2^2(t) \int_{\mathbb{R}^d} \!
\int_{\mathbb{R}^d} \xi_1(x) v(y,t) \frac{\left(v(x,t) - v(y,t)
\right) \left( \xi_1(x) - \xi_1(y) \right) }{|x-y|^{d+2s}}  \,
\mathrm{d}x \mathrm{d}y  \, .
\end{aligned}
\end{equation}
The Cauchy-Schwarz inequality allows us to bound the third integral on the r.h.s.\ of \eqref{eq:
derivate-spaziali-w-2} by the first two integrals. As concerns the first one, we have:
\begin{equation}\label{eq: derivate-spaziali-w-3}
\frac{C_{d,s}}{2} \, \xi_2^2(t) \int_{\mathbb{R}^d} \xi_1^2(x) \left( \int_{\mathbb{R}^d}  \frac{\left( v(x,t) - v(y,t) \right)^2}{|x-y|^{d+2s}}  \, \mathrm{d}y \right) \mathrm{d}x \\
\le \chi_{\operatorname{supp}{\xi_2} }(t) \left\| \xi_2
\right\|_\infty^2 \left\| \xi_1 \right\|_\infty^2 \left\|  v(t)
\right\|_{\dot{H}^s(\mathbb{R}^d)}^2 .
\end{equation}
In order to bound the second integral, it is important to recall
that the function $l_s(\xi_1)(y)$
is regular and decays at least like $|y|^{-d-2s} $ as $ |y| \to
\infty $ (for the definition and properties of $l_s$ see
Lemmas \ref{lem:decay-lap-2} and \ref{lem:decay-lap-cutoff}). Hence, thanks to the assumptions on $\rho$
and $\gamma$, we infer that
\begin{equation}\label{eq: controllo-int-rho-2}
\xi_2^2(t) \int_{\mathbb{R}^d} \left|v(y,t)\right|^2 \left(
\int_{\mathbb{R}^d}  \frac{\left( \xi_1(x) - \xi_1(y)
\right)^2}{|x-y|^{d+2s}}  \, \mathrm{d}x \right) \mathrm{d}y \le
C^\prime \chi_{\operatorname{supp}\xi_2 }(t) \left\| \xi_2
\right\|_\infty^2 \int_{\mathbb{R}^d} \left| v(y,t) \right|^2
\rho(y) \mathrm{d}y \, .
\end{equation}
Integrating in time
\eqref{eq: derivate-spaziali-w-2}, using \eqref{eq:
derivate-spaziali-w-3}, \eqref{eq: controllo-int-rho-2},
\eqref{eq: lemma-Hs-3}, \eqref{eq: lemma-Hs-1} and recalling the validity of the identity $ \left\| (-\Delta)^{\frac{s}{2}} (v_c)(t) \right\|_{L^2(\mathbb{R}^d)}^2 = \left\| v_c(t) \right\|_{\dot{H}^s(\mathbb{R}^d)}^2 $, we then get
\begin{equation}\label{eq: lemma-Hs-stima-Hs-prova}
\left\| (-\Delta)^{\frac{s}{2}} ( v_c )
\right\|_{L^2(\mathbb{R}^{d+1})}^2 \le C^\prime \, .
\end{equation}
By exploiting \eqref{eq: lemma-Hs-stima-L2-prova}, \eqref{eq:
lemma-Hs-stima-L2dt-prova} and \eqref{eq: lemma-Hs-stima-Hs-prova}
one deduces \eqref{eq: lemma-Hs-stima-Hs}, e.g.~by using Fourier transform methods.
\end{proof}
\end{lem}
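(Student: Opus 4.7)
The plan is to reduce the $H^s(\mathbb{R}^{d+1})$ bound on $v_c$ to three separate ingredients — the $L^2$ norm of $v_c$, the $L^2$ norm of the time derivative $(v_c)_t$, and the integrated spatial seminorm $\int_{\mathbb{R}}\|v_c(t)\|_{\dot H^s(\mathbb{R}^d)}^2\,dt$ — and then combine them via the Fourier characterization of $H^s$. Specifically, the elementary inequality $(1+|\xi|^2+\tau^2)^s \leq C_s(1+|\xi|^{2s}+\tau^2)$, valid for $s\in(0,1]$, implies $\|v_c\|_{H^s(\mathbb{R}^{d+1})}^2 \leq C_s\bigl(\|v_c\|_{L^2}^2 + \|(-\Delta_x)^{s/2}v_c\|_{L^2(\mathbb{R}^{d+1})}^2 + \|(v_c)_t\|_{L^2}^2\bigr)$, with the middle term equal to $\int_{\mathbb{R}}\|v_c(t)\|_{\dot H^s(\mathbb{R}^d)}^2\,dt$.

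First I would dispatch the two easy bounds. Since $\xi_1$ is compactly supported and $\rho$ is bounded below on $\operatorname{supp}\xi_1$ by a constant $c_1>0$ (thanks to \eqref{eq: ass-rho}), one has $\|v_c\|_{L^2(\mathbb{R}^{d+1})}^2 \leq c_1^{-1}\|\xi_1\|_\infty^2\|\xi_2\|_\infty^2 \int_{\operatorname{supp}\xi_2}\!\int_{\mathbb{R}^d}|v|^2\rho\,dx\,dt$, and \eqref{eq: lemma-Hs-3} closes the estimate. The time-derivative bound is analogous: expanding $(v_c)_t = \xi_1\xi_2'v + \xi_1\xi_2 v_t$ and applying the same weight-removal trick reduces it to \eqref{eq: lemma-Hs-3} together with \eqref{eq: lemma-Hs-2}.

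The main obstacle is the spatial seminorm. I would use the product decomposition
\[
v_c(x,t)-v_c(y,t) = \xi_2(t)\bigl[\xi_1(x)(v(x,t)-v(y,t)) + v(y,t)(\xi_1(x)-\xi_1(y))\bigr],
\]
square it, integrate against $|x-y|^{-d-2s}$, and handle the cross term by Cauchy--Schwarz. This reduces $\|v_c(t)\|_{\dot H^s}^2$ to the sum of $\xi_2^2(t)\|\xi_1\|_\infty^2\|v(t)\|_{\dot H^s}^2$, which is tame after integrating in $t$ via \eqref{eq: lemma-Hs-1}, and a delicate piece of the form $\xi_2^2(t)\int_{\mathbb{R}^d}|v(y,t)|^2 \, l_s(\xi_1)(y)\,dy$ with $l_s$ as in Lemma \ref{lem:decay-lap-2}. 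Here I would invoke the fact that $l_s(\xi_1)$ is bounded and decays like $|y|^{-d-2s}$ at infinity (Lemmas \ref{lem:decay-lap-2}--\ref{lem:decay-lap-cutoff}) together with the lower bound $\rho(y)\geq c|y|^{-\gamma}$ for $|y|>1$: since the hypothesis reads $\gamma \in (0,d+2s]$, one obtains $l_s(\xi_1)(y)\leq C\rho(y)$ uniformly in $y$, after which \eqref{eq: lemma-Hs-3} absorbs the contribution. It is precisely the admissible range of $\gamma$ here — larger than in the previous sections — that makes this matching of decays work, and this is the step I expect to require the most care.

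Once the three bounds are in hand, plugging them into the Fourier inequality stated above yields \eqref{eq: lemma-Hs-stima-Hs} with a constant $C'$ that depends only on $\rho$, $\xi_1$, $\xi_2$ (and on $C_0$ through \eqref{eq: lemma-Hs-3}--\eqref{eq: lemma-Hs-2}), concluding the proof.
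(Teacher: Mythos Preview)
Your proposal is correct and matches the paper's proof essentially step for step: the same three-piece reduction ($L^2$, $(v_c)_t$, spatial $\dot H^s$), the same product decomposition of $v_c(x,t)-v_c(y,t)$ with Cauchy--Schwarz on the cross term, the same use of the decay of $l_s(\xi_1)$ from Lemma~\ref{lem:decay-lap-2} together with the lower bound on $\rho$ (which is exactly where the condition $\gamma\le d+2s$ enters), and the final combination via the Fourier side. The only cosmetic difference is that you spell out the Fourier inequality explicitly, whereas the paper simply invokes ``Fourier transform methods''.
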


\begin{lem}\label{lem: decay-conv}
Let $d>2s$ and $ \phi:\mathbb{R}^d \rightarrow \mathbb{R} $ be a
continuous function which belongs to $L^1(\mathbb{R}^d)$ and
decays at least like $ |x|^{-d} $ as $ |x| \to \infty $. Then, the
convolution $I_{2s} \ast \phi$ (namely, the Riesz potential of
$\phi$) is also a continuous function, decaying at least like $
|x|^{-d+2s} $ as $ |x| \to \infty $.
\begin{proof}
The idea of the proof is to split the convolution $ (I_{2s} \ast \phi)(x) $ in the three regions $ B^c_{2|x|}(0)$, $ B_{{|x|}/{2}}(x) $, $ B_{2|x|}(0) \setminus B_{|x|/2}(x) $ and use there the decay and integrability properties of $ \phi $ and $ I_{2s} $. We omit the details.
\end{proof}
\end{lem}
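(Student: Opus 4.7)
The plan is to follow the hint provided after the lemma statement and, for large $|x|$, split the integral
$$(I_{2s} \ast \phi)(x) = k_{d,s} \int_{\mathbb{R}^d} \frac{\phi(y)}{|x-y|^{d-2s}} \, \mathrm{d}y$$
into the three pieces coming from $B_{2|x|}^c(0)$, $B_{|x|/2}(x)$ and $B_{2|x|}(0) \setminus B_{|x|/2}(x)$, which together cover $\mathbb{R}^d$. On each region either the factor $\phi(y)$ or the factor $I_{2s}(x-y)$ carries the decay one needs, and the goal is to prove separately that each contribution is bounded by a constant multiple of $|x|^{-d+2s}$, uniformly in $x$.

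On the outer region $\{|y|\ge 2|x|\}$, the triangle inequality gives $|x-y| \ge |y|/2$, hence $I_{2s}(x-y) \le 2^{d-2s} k_{d,s}\,|y|^{2s-d}$; combined with the hypothesis $|\phi(y)| \le C|y|^{-d}$ this produces an integrand bounded by $C|y|^{-2d+2s}$, which integrates on $\{|y|\ge 2|x|\}$ to a constant multiple of $|x|^{2s-d}$ because $2d-2s>d$. On the near region $B_{|x|/2}(x)$ one has $|y|\ge |x|/2$ so $|\phi(y)|\le C|x|^{-d}$, while a change of variables gives $\int_{B_{|x|/2}(x)} I_{2s}(x-y)\,\mathrm{d}y = c_{d,s}\,|x|^{2s}$, again yielding the order $|x|^{2s-d}$. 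On the annular region $B_{2|x|}(0)\setminus B_{|x|/2}(x)$ the bound $|x-y|\ge |x|/2$ gives $I_{2s}(x-y)\le 2^{d-2s}k_{d,s}\,|x|^{2s-d}$, whence the $L^1$ control on $\phi$ immediately yields the contribution $C\|\phi\|_1\,|x|^{2s-d}$. Summing the three estimates gives the claimed decay.

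Continuity would then follow by dominated convergence applied to $\int I_{2s}(z)\phi(x-z)\,\mathrm{d}z$: since $\phi$ is continuous and decays at infinity (hence is bounded) and, because $d>2s$, the kernel $I_{2s}$ is locally integrable and decays at infinity, one builds an integrable majorant for the integrand uniform as $x$ ranges over a neighborhood of any fixed point, re-using the same three-region split for the tail. I do not expect a serious obstacle here; the whole argument is essentially bookkeeping, the only delicate point being the near-region computation, which must be organized so that the bounds on $\phi$ and on the kernel combine to produce exactly the exponent $|x|^{2s-d}$ and not a worse one.
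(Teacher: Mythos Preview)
Your proposal is correct and follows precisely the three-region split indicated in the paper's proof sketch, filling in the details that the paper omits. The estimates on each region and the dominated-convergence argument for continuity are all standard and sound.
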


\begin{lem}\label{lem: potential-basic-prop-1}
Let $ d > 2s $ and assume that $\rho$ satisfies \eqref{eq: ass-rho} for some $ \gamma \in (0,2s) $.  Let $ v \in
L^1_{\rho}(\mathbb{R}^d) \cap L^\infty(\mathbb{R}^d) $ and $
U^{v}_{\rho} $ be the Riesz potential of $ \rho v$. Then $U^{v}_\rho$ belongs to $ C(\mathbb{R}^d)
\cap L^p(\mathbb{R}^d) $ for all $p$ such that
\begin{equation}\label{eq: potential-v-Lp}
p \in \left( \frac{d}{d-2s} , \infty \right] .
\end{equation}
\begin{proof}
In order to prove that $ U^v_\rho $ belongs to $ C(\mathbb{R}^d)
\cap L^p(\mathbb{R}^d) $ for all $p$ satisfying \eqref{eq:
potential-v-Lp}, we proceed as follows:
\[
U^v_\rho(x) = \underbrace{ \int_{B_1(0)} \rho(y)\,v(y) \,
I_{2s}(x-y) \, \mathrm{d}y}_{U^v_{\rho,1}(x)} + \underbrace{
\int_{\mathbb{R}^d} \chi_{B_1^c(0)}(y) \, \rho(y)\,v(y) \,
I_{2s}(x-y) \, \mathrm{d}y }_{U^v_{\rho,2}(x)} \, .
\]
Exploiting the fact that $ v \in L^\infty(\mathbb{R}^d) $ and $
\gamma<2s $ (so that $ |y|^{-d+2s}\,\rho(y) $ is locally
integrable), it is easily seen that $ U^v_{\rho,1}(x) $ is a
continuous function which decays at least like $ |x|^{-d+2s} $ as
$ |x| \to \infty $. In particular, it belongs to $
L^p(\mathbb{R}^d) $ for all $ p $ satisfying \eqref{eq:
potential-v-Lp}. As concerns $U^v_{\rho,2}(x)$, notice that since $ v \in
L^1_{\rho}(\mathbb{R}^d) \cap L^\infty(\mathbb{R}^d) $ we have
that the function $\chi_{B_1^c(0)} \rho v$ belongs to $
L^1(\mathbb{R}^d) \cap L^\infty(\mathbb{R}^d)$. Hence $ U^v_{\rho,2}(x)$ is continuous too. To
prove that it belongs to $ L^p(\mathbb{R}^d) $ for all $ p $
satisfying \eqref{eq: potential-v-Lp}, we write:
\begin{equation}\label{eq: prop-int-uv-2}
U^v_{\rho,2} = \left( \chi_{B_1(0)} \,  I_{2s}  \right) \ast
\left( \chi_{B_1^c(0)} \rho v \right) + \left(
\chi_{B_1^c(0)} \, I_{2s} \right) \ast
\left(\chi_{B_1^c(0)}\rho v \right) ;
\end{equation}
since $\chi_{B_1(0)} \,  I_{2s} \in L^1(\mathbb{R}^d)$ and $\chi_{B_1^c(0)} \rho v \in
L^1(\mathbb{R}^d) \cap L^\infty(\mathbb{R}^d)$, the first
convolution in \eqref{eq: prop-int-uv-2} belongs to $
L^1(\mathbb{R}^d) \cap L^\infty(\mathbb{R}^d) $. Using the fact
that $\chi_{B_1^c(0)} \, I_{2s} \in L^p(\mathbb{R}^d) $ for all $
p $ as in \eqref{eq: potential-v-Lp} and
$\chi_{B_1^c(0)} \rho v \in L^1(\mathbb{R}^d)$, we infer
that the second convolution in \eqref{eq: prop-int-uv-2} belongs
to $L^p(\mathbb{R}^d)$ for all such $ p $. The latter property is then inherited by $
U^v_{\rho,2} $.
\end{proof}
\end{lem}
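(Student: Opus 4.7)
The plan is to decompose $U^v_\rho = U^v_{\rho,1} + U^v_{\rho,2}$, where $U^v_{\rho,j}$ is the convolution of $I_{2s}$ with the restriction of $\rho v$ to $B_1$ (for $j=1$) or to $B_1^c$ (for $j=2$), and to show separately that each piece lies in $C(\mathbb{R}^d) \cap L^p(\mathbb{R}^d)$ for all $p \in (d/(d-2s),\infty]$. This split is natural because the hypotheses on $\rho$ give very different information in the two regions (possible singularity at the origin vs.\ controlled behavior at infinity).

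For $U^v_{\rho,1}$ I would use that $\rho v\chi_{B_1}$ is in $L^1(\mathbb{R}^d)$ and has compact support: local integrability of $|y|^{-\gamma_0}$ on $B_1$ holds because $\gamma_0 \le \gamma < 2s < d$, and $v \in L^\infty$. Continuity of $U^v_{\rho,1}$ at any $x_0$ then follows by dominated convergence: for $x$ in a compact neighborhood of $x_0$, the integrand $|y|^{-\gamma_0}|x-y|^{-(d-2s)}\chi_{B_1}(y)$ admits a uniform $L^1_y$ envelope, since the singularity at $y=0$ has exponent $\gamma_0 + (d-2s) < d$ and the one at $y=x$ has exponent $d-2s<d$. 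The decay follows from compact support: for $|x|\ge 2$ one has $|x-y|\ge |x|/2$ on $B_1$, yielding $U^v_{\rho,1}(x) \le C\|\rho v \chi_{B_1}\|_1 |x|^{-(d-2s)}$; boundedness plus this decay places $U^v_{\rho,1}$ in $L^p$ for every $p>d/(d-2s)$.

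For $U^v_{\rho,2}$ I would observe that $\rho v\chi_{B_1^c} \in L^1(\mathbb{R}^d) \cap L^\infty(\mathbb{R}^d)$ (boundedness since $\rho \le C$ on $B_1^c$ and $v \in L^\infty$; integrability from $\|\rho v\|_{L^1(B_1^c)} \le \|v\|_{1,\rho}$). Splitting $I_{2s}$ as $I_{2s}\chi_{B_1} + I_{2s}\chi_{B_1^c}$, the first summand is in $L^1(\mathbb{R}^d)$ (as $d-2s<d$) and Young's inequality places its convolution against $\rho v\chi_{B_1^c}$ in $L^1 \cap L^\infty$, hence in every $L^p$; the second summand lies in $L^p(\mathbb{R}^d)$ for every $p>d/(d-2s)$, and Young's inequality again gives its convolution with the $L^1$ function $\rho v\chi_{B_1^c}$ in the same $L^p$. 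Summing, $U^v_{\rho,2} \in L^p$ in the required range.

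The main obstacle, I expect, is the \emph{continuity} of $U^v_{\rho,2}$: since $I_{2s} \notin L^1(\mathbb{R}^d)$, no direct convolution-regularity theorem applies. My approach would be a truncation argument based on the splitting $I_{2s} = I_{2s}\chi_{B_\delta} + I_{2s}\chi_{B_\delta^c}$ for small $\delta > 0$. The contribution from $I_{2s}\chi_{B_\delta}$ is estimated uniformly in $x$ by $\|I_{2s}\chi_{B_\delta}\|_1 \|\rho v\chi_{B_1^c}\|_\infty = O(\delta^{2s})$, hence vanishes uniformly as $\delta \to 0$; the contribution from $I_{2s}\chi_{B_\delta^c}$ is continuous in $x$ by dominated convergence, since this kernel is bounded and the source is in $L^1$. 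Letting $\delta \to 0$ gives locally uniform approximation by continuous functions, and hence continuity of $U^v_{\rho,2}$.
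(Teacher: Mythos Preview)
Your proposal is correct and follows essentially the same approach as the paper: the same decomposition $U^v_\rho = U^v_{\rho,1} + U^v_{\rho,2}$, the same decay argument for $U^v_{\rho,1}$ based on compact support and local integrability of $|y|^{-d+2s}\rho(y)$, and the same splitting $I_{2s} = I_{2s}\chi_{B_1} + I_{2s}\chi_{B_1^c}$ combined with Young's inequality for the $L^p$ membership of $U^v_{\rho,2}$. The only difference is that you supply a detailed truncation argument for the continuity of $U^v_{\rho,2}$, whereas the paper simply asserts continuity as an immediate consequence of $\chi_{B_1^c}\rho v \in L^1(\mathbb{R}^d)\cap L^\infty(\mathbb{R}^d)$; your argument is a valid way to justify this step.
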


\section{}\label{sect: app-operatore}
This section is devoted to give a sketch of the
proofs of Theorem \ref{thm: self-adj} and of the forthcoming Proposition \ref{pro: laplaciano-Lp}. 

\begin{proof}[Sketch of proof of Theorem \ref{thm: self-adj}]
We start from the validity of the fractional
``integration by parts'' formula
\begin{equation}\label{eq: int-parti-app}
\frac{C_{d,s}}{2} \int_{\mathbb{R}^d} \int_{\mathbb{R}^d}
\frac{(\phi(x)-\phi(y))(\psi(x)-\psi(y))}{|x-y|^{d+2s}} \,
\mathrm{d}x \mathrm{d}y = \int_{\mathbb{R}^d} \phi(x)
(-\Delta)^s(\psi)(x)  \, \mathrm{d}x
\end{equation}
for all $ \phi,\psi \in \mathcal{D}(\mathbb{R}^d) $, and our aim is to
extend it to all functions of $X_{s,\rho} $. In order to do it,
the first step consists in showing that $ C^\infty(\mathbb{R}^d)
\cap X_{s,\rho} $ is dense in $ X_{s,\rho} $. This can be done
by mollification arguments, which however are slightly more
complicated than the standard ones, since we work with the weighted spaces $
L^2_{\rho}(\mathbb{R}^d) $ and $ L^2_{\rho^{-1}}(\mathbb{R}^d) $
instead of $ L^2(\mathbb{R}^d) $. Hence, given $ v,w \in
C^\infty(\mathbb{R}^d) \cap X_{s,\rho} $, one plugs the cut-off
functions $ \phi := \xi_R v $ and $ \psi := \xi_R w $ into \eqref{eq:
int-parti-app} and lets $ R \to \infty $. The problem is that on
the r.h.s.\ there appear terms involving $ \| \xi_R w
\|_{\dot{H}^s}$, and a priori we do not know whether $
C^\infty(\mathbb{R}^d) \cap  X_{s,\rho} $ is continuously
embedded in $ \dot{H}^s(\mathbb{R}^d) $. But this turns out to be
true: the inequality
\begin{equation}\label{eq: ineq-apriori}
\frac{C_{d,s}}{2} \int_{\mathbb{R}^d} \int_{\mathbb{R}^d}
\frac{(w(x)-w(y))^2}{|x-y|^{d+2s}} \, \mathrm{d}x \mathrm{d}y
\le \int_{\mathbb{R}^d} w(x) (-\Delta)^s(w)(x)  \, \mathrm{d}x \ \
\ \forall w \in C^\infty(\mathbb{R}^d) \cap X_{s,\rho}
\end{equation}
can be proved just by repeating the above scheme with $ \phi=\psi=
\xi_R w $. In fact, on the r.h.s.\ of \eqref{eq: int-parti-app}
we still have terms involving $ \| \xi_R w \|_{\dot{H}^s}$, but
the latter are small and can be absorbed into the l.h.s.; passing
to the limit as $ R \to \infty $ yields \eqref{eq: ineq-apriori}.
Therefore, we can now let $R \to \infty $ safely in \eqref{eq: int-parti-app} (with
$ \phi= \xi_R v $ and $ \psi=\xi_R w $) and obtain that
\begin{equation}\label{eq: int-parti-app-step-1}
\frac{C_{d,s}}{2} \int_{\mathbb{R}^d} \int_{\mathbb{R}^d}
\frac{(v(x)-v(y))(w(x)-w(y))}{|x-y|^{d+2s}} \, \mathrm{d}x
\mathrm{d}y = \int_{\mathbb{R}^d} v(x) (-\Delta)^s(w)(x)  \,
\mathrm{d}x
\end{equation}
for all $v,w \in C^\infty(\mathbb{R}^d) \cap X_{s,\rho} $, which
in particular shows that \eqref{eq: ineq-apriori} is actually an equality.
Notice that in all these approximation procedures using cut-off
functions, to prove that ``remainder'' terms go to zero we deeply
exploit the results provided by Lemmas \ref{lem:decay-lap-1},
\ref{lem:decay-lap-2} and \ref{lem:decay-lap-cutoff}. It is in
fact here that the condition $ \gamma < 2s $ plays a fundamental
role: in particular, it ensures that both $\| \rho^{-1}
(-\Delta)^s(\xi_R) \|_\infty $ and $\| \rho^{-1} l_s(\xi_R)
\|_\infty $ vanish as $R \to \infty$. As already mentioned, we
refer the reader to the note \cite{Nota-oper} for the details.
However, for similar computations involving $(-\Delta)^s(\xi_R)$
and $l_s(\xi_R)$, see also the proofs of Proposition \ref{oss:
cons-mass}, Lemma \ref{lem: stroock-var} and Lemma \ref{lem:
lem-uniq-pre}.

By the claimed density of $ C^\infty(\mathbb{R}^d) \cap
X_{s,\rho} $, we are allowed to extend \eqref{eq:
int-parti-app-step-1} to the whole of $ X_{s,\rho} $. Clearly,
the r.h.s.\ of \eqref{eq: int-parti-app-step-1} can be rewritten
as
$$ \int_{\mathbb{R}^d} v(x) \, A(w)(x)  \, \rho(x) \mathrm{d}x \, , $$
and letting $v=w$ we obtain that the operator $ A $ is
positive. The fact that it is densely defined is trivial since,
for instance, $ \mathcal{D}(\mathbb{R}^d) \subset X_{s,\rho} $.
Because in \eqref{eq: int-parti-app-step-1} one can interchange
the role of $v$ and $w$, we also have that $ A $ is symmetric. In
order to prove that it is self-adjoint we need to show that $
D(A^\ast) \subset D(A) $, namely that any function of $ D(A^\ast)
$ also belongs to $X_{s,\rho}$. It is indeed straightforward to
check this fact, and we leave it to the reader.

We finally deal with the quadratic form $Q$ associated to $A$. Thanks
to \eqref{eq: int-parti-app-step-1}, we have that
\begin{equation}\label{eq: quad-form-dom-A}
Q(v,v)= \frac{C_{d,s}}{2} \int_{\mathbb{R}^d} \int_{\mathbb{R}^d}
\frac{(v(x)-v(y))^2}{|x-y|^{d+2s}} \, \mathrm{d}x \mathrm{d}y
\ \ \ \forall v \in D(A) \, .
\end{equation}
As it is well known (see e.g.~\cite{D}), the domain $D(Q)$ of $ Q
$ is just the closure of $D(A) $ w.r.t.\ the norm
$$ \left\| v \right\|_{Q}^2 := \left\| v \right\|_{2,\rho^{-1}}^2 + Q(v,v) = \left\| v \right\|_{2,\rho^{-1}}^2 + \left\| v \right\|_{\dot{H}^s}^2 . $$
It is then easy to see that such a closure is nothing but
$L^2_{\rho}(\mathbb{R}^d) \cap \dot{H}^s(\mathbb{R}^d) $ and
the quadratic form on $D(Q)=L^2_{\rho}(\mathbb{R}^d) \cap
\dot{H}^s(\mathbb{R}^d)$ is still represented by \eqref{eq:
quad-form-dom-A}.

By classical results (we refer again to \cite{D}), proving that $
A $ generates a Markov semigroup is equivalent to proving that if
$ v $ belongs to $D(Q) $ then both $ v \vee 0 $ and $ v \wedge 1 $
belong to $D(Q)$ and satisfy
\begin{equation*}
Q(v \vee 0,v \vee 0) \le Q(v,v) \, , \ \ \ Q(v \wedge 1,v \wedge
1) \le Q(v,v) \, .
\end{equation*}
But the latter properties are straightforward consequences of the
characterization of $Q$ given above.

The last assertions follow from the general theory of symmetric
Markov semigroups (cf.\ \cite[Section 1.4]{D}) and from their known
analiticity properties (cf.\ \cite[Theorem 1.4.2]{D}). See also the discussion in the proof of Lemma \ref{lem: approx}.
\end{proof}

The next proposition extends the symmetry property of
the operator $A=\rho^{-1}\,(-\Delta)^s$ to functions which belong
to other suitable $L^p_{\rho}$ spaces. This is essential in
proving our uniqueness Theorem \ref{thm: teorema-uniqueness} for
certain values of $\gamma$ and $s$ in low dimensions $d \le 3 $, more
precisely whenever $(d-\gamma)/(d-2s) > 2 $.

\begin{pro}\label{pro: laplaciano-Lp}
Let $d>2s$ and assume that $\rho$ satisfies \eqref{eq: ass-rho}
for some $\gamma \in [0,2s) \cap [0,d-2s]$ and $ \gamma_0 \in
[0,\gamma] $. Let $p \in [2,2(d-\gamma)/(d-2s)) $ and
$p^\prime={p}/(p-1)$ be its conjugate exponent. Suppose that $v,w
\in L^p_{\rho}(\mathbb{R}^d) $ are such that $A(v),A(w) \in
L^{p^\prime}_{\rho}(\mathbb{R}^d) $. Then $v,w \in
\dot{H}^s(\mathbb{R}^d) $ and the following formula holds:
\begin{equation*}
\begin{aligned}
\int_{\mathbb{R}^d} v(x) (-\Delta)^s(w)(x)  \, \mathrm{d}x = & \int_{\mathbb{R}^d}  (-\Delta)^s(v)(x) \, w(x)  \, \mathrm{d}x  \\
= & \frac{C_{d,s}}{2} \int_{\mathbb{R}^d} \int_{\mathbb{R}^d}
\frac{(v(x)-v(y))(w(x)-w(y))}{|x-y|^{d+2s}} \, \mathrm{d}x
\mathrm{d}y \, .
\end{aligned}
\end{equation*}
\end{pro}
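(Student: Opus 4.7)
The plan is to reduce Proposition \ref{pro: laplaciano-Lp} to Theorem \ref{thm: self-adj} through approximation. I would proceed in two stages.

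\emph{Stage 1 (membership in $\dot{H}^s$).} By polarization it suffices to treat $v=w$. I would approximate $v$ by a sequence $\{v_n\}\subset X_{s,\rho}=D(A)$ with $v_n\to v$ in $L^p_{\rho}(\mathbb{R}^d)$ and $A(v_n)\to A(v)$ in $L^{p'}_{\rho}(\mathbb{R}^d)$. A natural candidate combines a truncation $\xi_n$ as in Lemma \ref{lem:decay-lap-cutoff} with a standard mollification: set $v_n := \eta_n * (\xi_n v)$, where the commutator $[(-\Delta)^s,\xi_n]$ and the mollification error are controlled via Lemmas \ref{lem:decay-lap-1}--\ref{lem:decay-lap-cutoff} and the hypothesis $\gamma\in[0,2s)$ (so that $\rho^{-1}$ compensates the decay of $(-\Delta)^s(\xi_n)$). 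An equally convenient alternative is the Yosida regularization $v_\lambda := \lambda(\lambda+A)^{-1}v$, which belongs to $D(A)$ and enjoys the required convergences thanks to the consistency of the semigroups $\{S_q(t)\}_{q\in[1,\infty]}$ from Theorem \ref{thm: self-adj}. Applying the identity of Theorem \ref{thm: self-adj} to $v_n\in X_{s,\rho}$ together with H\"older's inequality with the conjugate exponents $(p,p')$ gives
\[\|v_n\|_{\dot{H}^s}^2 \;=\; \int_{\mathbb{R}^d} v_n(x)\,A(v_n)(x)\,\rho(x)\,\mathrm{d}x \;\le\; \|v_n\|_{p,\rho}\,\|A(v_n)\|_{p',\rho}.\]
This bound is uniform in $n$, so $\{v_n\}$ is bounded in $\dot{H}^s$; weak compactness together with a.e.\ convergence to $v$ yields $v\in\dot{H}^s(\mathbb{R}^d)$. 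The same argument applies to $w$.

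\emph{Stage 2 (the bilinear identity).} For $v_n,w_n\in X_{s,\rho}$, Theorem \ref{thm: self-adj} combined with \eqref{eq: id-parti-nonlocal-Hs-Hs} provides
\[\int_{\mathbb{R}^d} v_n\,A(w_n)\,\rho\,\mathrm{d}x \;=\; \frac{C_{d,s}}{2}\iint_{\mathbb{R}^d\times\mathbb{R}^d} \frac{(v_n(x)-v_n(y))(w_n(x)-w_n(y))}{|x-y|^{d+2s}}\,\mathrm{d}x\,\mathrm{d}y \;=\; \int_{\mathbb{R}^d} A(v_n)\,w_n\,\rho\,\mathrm{d}x.\]
The outer integrals converge respectively to $\int v\,A(w)\,\rho\,\mathrm{d}x$ and $\int A(v)\,w\,\rho\,\mathrm{d}x$ by H\"older and the convergences from Stage 1; the middle expression converges to the analogous bilinear form on $v,w$ via \eqref{eq: id-parti-nonlocal-Hs-Hs}, which is valid on the whole of $\dot{H}^s(\mathbb{R}^d)\times\dot{H}^s(\mathbb{R}^d)$. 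This yields the desired chain of equalities.

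\emph{Main obstacle.} The principal difficulty is constructing the approximating sequence so that one has simultaneously $v_n\to v$ in $L^p_{\rho}$ and $A(v_n)\to A(v)$ in $L^{p'}_{\rho}$, despite $p\neq p'$ in general. The restriction $p<2(d-\gamma)/(d-2s)$ (the Sobolev-type exponent of Lemma \ref{lem: CKN}) is precisely what guarantees compatibility between these two integrability requirements through the Caffarelli-Kohn-Nirenberg interpolation, while the consistent analytic contraction semigroups $\{S_q(t)\}_{q\in(1,\infty)}$ furnished by Theorem \ref{thm: self-adj} supply the functional-analytic framework in which the regularization can be carried out uniformly; the hypothesis $\gamma<2s$ is what forces the commutator terms to vanish in the limit, exactly as in the proof of Theorem \ref{thm: self-adj} sketched above.
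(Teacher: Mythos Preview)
Your overall strategy---approximate and invoke Theorem \ref{thm: self-adj}---is the paper's as well, but Stage~1 contains a genuine gap. You need approximants $v_n\in X_{s,\rho}$ with \emph{simultaneous} convergence $v_n\to v$ in $L^p_\rho$ and $A(v_n)\to A(v)$ in $L^{p'}_\rho$, and neither of your candidates delivers this cleanly. For $v_n=\eta_n*(\xi_n v)$, the commutator $[(-\Delta)^s,\xi_n]v$ decomposes as $v\,(-\Delta)^s(\xi_n)$ plus the bilinear piece
\[
B(\xi_n,v)(x)=C_{d,s}\,p.v.\int_{\mathbb R^d}\frac{(\xi_n(x)-\xi_n(y))(v(x)-v(y))}{|x-y|^{d+2s}}\,\mathrm{d}y\,;
\]
controlling the latter in any $L^q_\rho$ norm already presupposes $v\in\dot H^s(\mathbb{R}^d)$, which is precisely what you are trying to prove. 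For the Yosida regularization, $v_\lambda=\lambda(\lambda+A_p)^{-1}v$ lies in $D(A_p)\subset L^p_\rho$, but when $p>2$ there is no reason for $v_\lambda\in L^2_\rho$, hence no reason for $v_\lambda\in X_{s,\rho}=D(A)$, so Theorem \ref{thm: self-adj} does not apply to it.

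The paper sidesteps this circularity by \emph{not} requiring $A(\xi_R v)\to A(v)$. After mollifying, it works directly with the identity $\|\xi_R v\|_{\dot H^s}^2=\int \xi_R v\,(-\Delta)^s(\xi_R v)\,\mathrm{d}x$ (valid since $\xi_R v\in\mathcal D(\mathbb R^d)$), expands via the product rule, and absorbs the terms carrying a factor $\|\xi_R v\|_{\dot H^s}$ into the left-hand side, exactly as in the derivation of \eqref{eq: ineq-apriori}. What remains are integrals of the type $\int v^2\,(-\Delta)^s(\xi_R)\,\mathrm{d}x$ and $\int v^2\,l_s(\xi_R)\,\mathrm{d}x$. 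These are handled by H\"older with exponents $(p/2,(p/2)')$: Lemmas \ref{lem:decay-lap-1}--\ref{lem:decay-lap-cutoff} give $\|\rho^{-1}(-\Delta)^s(\xi_R)\|_{q,-\gamma}+\|\rho^{-1}l_s(\xi_R)\|_{q,-\gamma}\to0$ precisely when $q>(d-\gamma)/(2s-\gamma)$, which is equivalent to $p<2(d-\gamma)/(d-2s)$. Thus the exponent restriction enters through the decay of the cut-off remainders in weighted $L^q$, not through the Caffarelli--Kohn--Nirenberg interpolation of Lemma \ref{lem: CKN} as you suggest.
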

\begin{proof}[Sketch of proof]
The method of proof proceeds along the lines of the one of Theorem
\ref{thm: self-adj}. The main difference here lies in the fact
that, when using the approximation procedure by cut-off functions
mentioned above, if $p$ is \emph{strictly larger} than $2$ in
order to prove that ``remainder'' terms go to zero one cannot
exploit the fact that $ \rho^{-1} (-\Delta)^s(\xi_R) $ and $
\rho^{-1} l_s(\xi_R) $ vanish in $L^\infty(\mathbb{R}^d)$ as $R
\to \infty$. In fact, such remainder terms are of the form
\begin{equation}\label{eq: remainder-term}
\int_{\mathbb{R}^d}  v^2(x)  (-\Delta)^s(\xi_R)(x) \, \mathrm{d}x
\quad \textrm{or} \quad \int_{\mathbb{R}^d}  v^2(x) \,
l_s(\xi_R)(x) \, \mathrm{d}x \, .
\end{equation}
Thanks to Lemmas \ref{lem:decay-lap-1}, \ref{lem:decay-lap-2} and
\ref{lem:decay-lap-cutoff}, it is direct to see that $\| \rho^{-1}
(-\Delta)^s(\xi_R) \|_{q,-\gamma} $ and $\| \rho^{-1} l_s(\xi_R)
\|_{q,-\gamma} $ vanish as $R\to\infty$ provided $q  >
(d-\gamma)/(2s-\gamma) $, whence the condition $p \in
[2,2(d-\gamma)/(d-2s))$ to ensure that also the integrals in
\eqref{eq: remainder-term} go to zero as $R\to\infty$.
\end{proof}

\section*{Acknowledgements}
G. G. and M. M. have partially been supported by the MIUR-PRIN 2012  grant ``Equazioni alle derivate parziali di tipo ellittico e parabolico:
aspetti geometrici, disuguaglianze collegate, e applicazioni''. F. P. has been supported by MIUR-PRIN 2012 grant ``Critical Point Theory and Perturbative Methods for Nonlinear Differential Equations''. All authors thank the Gruppo Nazionale per l’Analisi Matematica, la Probabilit\`a e le loro Applicazioni (GNAMPA) of the Istituto Nazionale di Alta Matematica (INdAM).

\bibliographystyle{plainnat}


\end{document}